\theoremstyle{plain}
\newtheorem{theorem}{Theorem}[section]
\newtheorem{lemma}[theorem]{Lemma}
\newtheorem{proposition}[theorem]{Proposition}
\theoremstyle{definition}
\newtheorem{definition}[theorem]{Definition}
\newtheorem{corollary}[theorem]{Corollary}
\newtheorem{example}[theorem]{Example}
\theoremstyle{remark}
\newtheorem{remark}{\sc Remark}
\def\namedlabel#1#2{\begingroup
   \def\@currentlabel{#2}%
   \label{#1}\endgroup}
\date{}
\title{\bf  $n$-normal residuated lattices}\vspace{.25 in}
\author{ \vspace{.25 in} {\bf  Saeed Rasouli$^1$} and {\bf  Michiro Kondo$^2$}\\
$^1$Persian Gulf University, 75168, Bushehr, Iran \\
{\tt srasouli@pgu.ac.ir }\\\
\\
$^2$ Department of Mathematics,\\
School of System Design and Technology,\\
Tokyo Denki University,\\
Senju 5, Adachi, Tokyo, 120-8551\\
{\tt mkondo@mail.dendai.ac.jp}\\ }
\begin{document}
 \maketitle
 \begin{abstract}
 The notion of $n$-normal residuated lattice, as a class of residuated lattices in which every prime filter contains at most $n$ minimal prime filters, is introduced and studied. Before that, the notion of $\omega$-filter is introduced and it is observed that the set of $\omega$-filters in a residuated lattice forms a distributive lattice on its own, which includes the set of coannulets as a sublattice. The class of $n$-normal residuated lattices is characterized in terms of their prime filters, minimal prime filters, coannulets and $\omega$-filters.
\footnote{2010 Mathematics Subject Classification: 06F99,06D20 \\
{\it Key words and phrases}: residuated lattice; $\omega$-filter; coannihilator; coannulet; normal residuated lattice.}
\end{abstract}
\section{Introduction}\label{sec1}

Distributive pseudo-complemented lattices form an important class of distributive lattices. Garrett Birkhoff asked a question \citep[Problem 70]{bir} inspired by M. H. Stone: ``What is the most general pseudo-complemented distributive lattice in which $x^*\vee x^{**}=1$ identically?" The first solution to this problem belongs to \cite{gras}, who gave the name ``\textit{Stone lattices}" to this class of lattices. They characterized stone lattices as distributive pseudo-complemented lattices in which any pair of incomparable minimal prime ideals is comaximal or equivalently each prime ideal contains a unique minimal prime ideal. Motivated by this characterization, \cite{cor} studied distributive lattices with $0$ in which each prime ideal contains a unique minimal prime ideal under the name ``\textit{normal lattices}". He proved that if $\mathfrak{A}$ is a distributive lattice with $0$, it is normal if and only if for any $x,y\in A$, $x\wedge y=0$ implies $x^{\perp}$ and $y^{\perp}$ are comaximal. Cornish used the ``normal" term in light of \cite{wal}, who proved that the lattice of closed subsets of a $T_1$ space satisfies the above annihilator condition if and only if the space is normal. A complete study on normal lattices can be found in \cite{cor,jo82,za83,pa93}. On the other hand, the results of \cite{gras} generalized by \cite{lee1970}, who considered lattices in which each prime ideal contains at most $n$ minimal prime ideals. \cite{cor74} gave the name ``\textit{$n$-normal lattices}" to this class and presented some of their characterization. The concept of $n$-normality for join-semilattices and posets considered by \cite{nw05} and \cite{hjk10}, respectively.

In this paper, we introduce the notion of $n$-normal residuated lattices and generalize some results of \cite{cor74} and \cite{hjk10} to this class of algebras.

This paper is organized in four sections as follow: In Section \ref{sec2}, some definitions and facts about residuated lattices are recalled and some proposition about prime and minimal prime filters are proved. Also, for a given filter $F$ of a residuated lattice $\mathfrak{A}$, it is recalled that the set of coannihilators belonging to $F$, $\Gamma_{F}(\mathfrak{A})$, forms a complete Boolean algebra on its own, and the set of coannulets belonging to $F$, $\gamma_{F}(\mathfrak{A})$, is a sublattice of $\Gamma_{F}(\mathfrak{A})$. In Section \ref{sec3}, notions of $\omega$-filters and divisor filters, as an especial subclass of $\omega$-filters in a residuated lattice, are introduced and some properties of them are studied. For a given residuated lattice $\mathfrak{A}$ and a filter $F$ of $\mathfrak{A}$ it is shown that the set of $\omega$-filters belonging to $F$, $\Omega_{F}(\mathfrak{A})$, forms a distributive lattice on its own, and $\gamma_{F}(\mathfrak{A})$ is a sublattice of $\Omega_{F}(\mathfrak{A})$. Also, it is shown that for a prime filter $P$ containing $F$ the set of $F$-divisors of $P$, $D_{F}(P)$, is the intersection of $F$-minimal prime filters of $\mathfrak{A}$ and $P$ is $F$-minimal if and only if $P=D_{F}(P)$. In Section \ref{sec4}, the notion of $n$-normal residuated lattice is introduced and characterized by applying of prime filters and minimal prime filters. Normal residuated lattices are characterized as those one their lattice of $\omega$-filters are a sublattice of their lattice of filters. Finally, it is proved that in a normal residuated lattice the greatest $\omega$-filter contained in a filter exists.
\section{Residuated lattices}\label{sec2}

In this section, we recall some definitions, properties and results relative to residuated lattices, which will be used
in the following. The results in the this section are original, excepting those that we cite from other papers.

An algebra $\mathfrak{A}=(A;\vee,\wedge,\odot,\rightarrow,0,1)$ is called a \textit{residuated lattice} if $\ell(\mathfrak{A})=(A;\vee,\wedge,0,1)$ is a bounded lattice, $(A;\odot,1)$ is a commutative monoid and $(\odot,\rightarrow)$ is an adjoint pair. A residuated lattice $\mathfrak{A}$ is called a \textit{MTL algebra} if satisfying the \textit{pre-linearity condition} (denoted by \ref{prel}):
\begin{enumerate}
\item [$(prel)$ \namedlabel{prel}{$(prel)$}] $(x\rightarrow y)\vee(y\rightarrow x)=1$, for all $x,y\in A$.
\end{enumerate}

In a residuated lattice $\mathfrak{A}$, for any $a\in A$, we put $\neg a:=a\rightarrow 0$. It is well-known that the class of residuated lattices is equational \citep{idz}, and so it forms a variety. The properties of residuated lattices were presented in \cite{gal}. For a survey of residuated lattices we refer to \cite{jip}.
\begin{remark}\label{resproposition}\citep[Proposition 2.2]{jip}
Let $\mathfrak{A}$ be a residuated lattice. The following conditions are satisfied for any $x,y,z\in A$:
\begin{enumerate}
  \item [$r_{1}$ \namedlabel{res1}{$r_{1}$}] $x\odot (y\vee z)=(x\odot y)\vee (x\odot z)$;
  \item [$r_{2}$ \namedlabel{res2}{$r_{2}$}] $x\vee (y\odot z)\geq (x\vee y)\odot (x\vee z)$.
  \end{enumerate}
\end{remark}
\begin{example}\label{rex2}
Let $A_6=\{0,a,b,c,d,1\}$ be a lattice whose Hasse diagram is below (see Figure \ref{graph6}).  Define $\odot$ and $\rightarrow$ on $A_7$ as follows:
\begin{eqnarray*}
\begin{array}{l|llllll}
  \odot & 0 & a & b & c & d &  1   \\ \hline
    0   & 0 & 0 & 0 & 0 & 0 &  0  \\
    a   & 0 & a & a & 0 & a &  a  \\
    b   & 0 & a & a & 0 & a &  b  \\
    c   & 0 & 0 & 0 & c & c &  c  \\
    d   & 0 & a & a & c & d &  d  \\
    1   & 0 & a & b & c & d &  1
  \end{array}& \hspace{1cm} &
  \begin{array}{l|llllll}
  \rightarrow & 0 & a & b & c & d &  1   \\ \hline
    0         & 1 & 1 & 1 & 1 & 1 &  1  \\
    a         & c & 1 & 1 & c & 1 &  1  \\
    b         & c & d & 1 & c & 1 &  1  \\
    c         & b & b & b & 1 & 1 &  1  \\
    d         & 0 & b & b & c & 1 &  1  \\
    1         & 0 & a & b & c & d &  1
  \end{array}
\end{eqnarray*}
\begin{figure}[h]
\centering
\includegraphics[scale=.15]{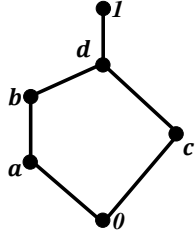}
\caption{The Hasse diagram of $\mathfrak{A}_6$.}
\label{graph6}
\end{figure}
Routine calculation shows that  $\mathfrak{A}_6=(A_6;\vee,\wedge,\odot,\rightarrow,0,1)$ is a residuated lattice.
\end{example}

Let $\mathfrak{A}$ be a residuated lattice. A non-void subset $F$ of $A$ is called a \textit{filter} of $\mathfrak{A}$ if $x,y\in F$ implies $x\odot y\in F$ and $x\vee y\in F$ for any $x\in F$ and $y\in A$. The set of filters of $\mathfrak{A}$ is denoted by $\mathscr{F}(\mathfrak{A})$. A filter $F$ of $\mathfrak{A}$ is called \textit{proper} if $F\neq A$. Clearly, $F$ is a proper filter if and only if $0\notin F$. For any subset $X$ of $A$ the \textit{filter of $\mathfrak{A}$ generated by $X$} is denoted by $\mathscr{F}(X)$. For each $x\in A$, the filter generated by $\{x\}$ is denoted by $\mathscr{F}(x)$ and called \textit{principal filter}. The set of principal filters is denoted by $\mathscr{PF}(\mathfrak{A})$. Let $\mathcal{F}$ be a collection of filters of $\mathfrak{A}$. Set $\veebar \mathcal{F}=\mathscr{F}(\cup \mathcal{F})$. It is well-known that $(\mathscr{F}(\mathfrak{A});\cap,\veebar,\textbf{1},A)$ is a frame and so it is a complete Heyting algebra.
\begin{example}\label{fex2}
Consider the residuated lattice $\mathfrak{A}_6$ from Example \ref{rex2}. Then $\mathscr{F}(\mathfrak{A}_6)=\{F_1=\{1\},F_2=\{d,1\},F_3=\{a,b,d,1\},F_4=\{c,d,1\},F_5=A_6\}$.
\end{example}

The following remark has a routine verification.
\begin{remark}\label{genfilprop}
Let $\mathfrak{A}$ be a residuated lattice and $F$ be a filter of $\mathfrak{A}$. The following assertions hold for any $x,y\in A$:
\begin{enumerate}
  \item  [$(1)$ \namedlabel{genfilprop1}{$(1)$}] $\mathscr{F}(F,x):=F\veebar \mathscr{F}(x)=\{a\in A|f\odot x^n\leq a,~f\in F\}$;
  \item  [$(2)$ \namedlabel{genfilprop2}{$(2)$}] $x\leq y$ implies $\mathscr{F}(F,y)\subseteq \mathscr{F}(F,x)$.
  \item  [$(3)$ \namedlabel{genfilprop3}{$(3)$}] $\mathscr{F}(F,x)\cap \mathscr{F}(F,y)=\mathscr{F}(F,x\vee y)$;
  \item  [$(4)$ \namedlabel{genfilprop4}{$(4)$}] $\mathscr{F}(F,x)\veebar \mathscr{F}(F,y)=\mathscr{F}(F,x\odot y)$;
  \item  [$(5)$ \namedlabel{genfilprop5}{$(5)$}] $\mathscr{PF}(\mathfrak{A})$ is a sublattice of $\mathscr{F}(\mathfrak{A})$.
\end{enumerate}
\end{remark}

A proper filter of a residuated lattice $\mathfrak{A}$ is called \textit{maximal} if it is a maximal element in the set of all proper filters. The set of all maximal filters of $\mathfrak{A}$ is denoted by $Max(\mathfrak{A})$. A proper filter $P$ of $\mathfrak{A}$ is called \textit{prime}, if for any $x,y\in A$, $x\vee y\in P$ implies $x\in P$ or $y\in P$. The set of all prime filters of $\mathfrak{A}$ is denoted by $Spec(\mathfrak{A})$. Since $\mathscr{F}(\mathfrak{A})$ is a distributive lattice, so $Max(\mathfrak{A})\subseteq Spec(\mathfrak{A})$. By Zorn's lemma follows that any proper filter is contained in a maximal filter and so in a prime filter.

A non-empty subset $\mathscr{C}$ of $\mathfrak{A}$ is called \textit{$\vee$-closed} if it is closed under the join operation, i.e $x,y\in \mathscr{C}$ implies $x\vee y\in \mathscr{C}$.
\begin{remark}\label{primclos}
It is obvious that a filter $P$ is prime if and only if $P^{c}$ is $\vee$-closed. Also, if $\mathscr{P}\subseteq Spec(\mathfrak{A})$, then $(\cup \mathscr{P})^{c}$ is a $\vee$-closed subset of $\mathfrak{A}$.
\end{remark}

The following result is an easy consequence of Zorn's lemma.
\begin{lemma}\label{0prfiltth}
If $\mathscr{C}$ is a $\vee$-closed subset of $\mathfrak{A}$ which does not meet the filter $F$, then $\mathscr{C}$ is contained in a $\vee$-closed subset $\textsf{C}$ which is maximal with respect to the property of not meeting $F$.
\end{lemma}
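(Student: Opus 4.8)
The plan is to apply Zorn's lemma to the poset of candidate $\vee$-closed subsets, ordered by inclusion. Concretely, I would let $\mathcal{S}$ denote the collection of all $\vee$-closed subsets $D$ of $A$ satisfying $\mathscr{C}\subseteq D$ and $D\cap F=\emptyset$, partially ordered by set inclusion. The hypothesis that $\mathscr{C}$ is itself $\vee$-closed and does not meet $F$ guarantees $\mathscr{C}\in\mathcal{S}$, so $\mathcal{S}$ is nonempty; and any maximal element of $\mathcal{S}$ will, by construction, be a $\vee$-closed subset containing $\mathscr{C}$ that is maximal among such subsets with respect to not meeting $F$, which is exactly the assertion.

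To invoke Zorn's lemma, I would verify that every chain in $\mathcal{S}$ admits an upper bound in $\mathcal{S}$. Given a chain $\{D_i\}_{i\in I}$ in $\mathcal{S}$, the natural candidate is its union $D=\bigcup_{i\in I}D_i$. It is immediate that $\mathscr{C}\subseteq D$, and since $D_i\cap F=\emptyset$ for every $i$, also $D\cap F=\emptyset$, so $D$ still avoids $F$.

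The one point deserving care — and the only place the chain hypothesis is genuinely used — is checking that $D$ is $\vee$-closed. Here I would take $x,y\in D$, pick indices with $x\in D_i$ and $y\in D_j$, and use that $\{D_i\}_{i\in I}$ is totally ordered to assume without loss of generality $D_i\subseteq D_j$; then $x,y\in D_j$, and the $\vee$-closedness of $D_j$ yields $x\vee y\in D_j\subseteq D$. Hence $D\in\mathcal{S}$ is an upper bound for the chain, and Zorn's lemma furnishes a maximal element $\textsf{C}$ of $\mathcal{S}$, completing the argument. I expect no genuine obstacle: this is a routine maximal-principle argument, the only subtlety being the familiar observation that a union of a \emph{chain} of $\vee$-closed sets remains $\vee$-closed, whereas an arbitrary union of $\vee$-closed sets need not be.
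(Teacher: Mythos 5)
Your proof is correct and is exactly the argument the paper has in mind: the paper offers no details, stating only that the lemma ``is an easy consequence of Zorn's lemma,'' and your Zorn's-lemma argument on the poset of $\vee$-closed subsets containing $\mathscr{C}$ and disjoint from $F$, with the union of a chain as upper bound, is the standard way to fill that in. The one subtlety you flag --- that a union of a \emph{chain} of $\vee$-closed sets is again $\vee$-closed, unlike an arbitrary union --- is indeed the only point requiring care, and you handle it properly.
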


The following important result is proved for pseudo-BL algebras \cite[Theorem 4.28]{din0}; however, it can be proved without difficulty in all residuated lattices.
\begin{theorem}\label{prfilth}
If $\mathscr{C}$ is a $\vee$-closed subset of $\mathfrak{A}$ which does not meet the filter $F$, then $F$ is contained in a filter $P$ which is maximal with respect to the property of not meeting $\mathscr{C}$; furthermore $P$ is prime.
\end{theorem}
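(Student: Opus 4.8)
The plan is to prove the two assertions in turn: first the existence of a filter $P\supseteq F$ maximal among filters missing $\mathscr{C}$, then the primeness of any such $P$.

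For existence I would invoke Zorn's lemma on the family $\mathcal{G}$ of all filters $G$ with $F\subseteq G$ and $G\cap\mathscr{C}=\emptyset$, ordered by inclusion. This family is nonempty since $F$ itself lies in it by hypothesis, and the union of a chain of filters in $\mathcal{G}$ is again a filter containing $F$ and disjoint from $\mathscr{C}$, hence an upper bound inside $\mathcal{G}$. A maximal element $P$ of $\mathcal{G}$ is then precisely a filter containing $F$ that is maximal with respect to not meeting $\mathscr{C}$; this is the filter-theoretic counterpart of Lemma \ref{0prfiltth}.

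For primeness I would argue by contradiction: assume $x\vee y\in P$ but $x\notin P$ and $y\notin P$. By maximality of $P$, the strictly larger filters $\mathscr{F}(P,x)$ and $\mathscr{F}(P,y)$ must both meet $\mathscr{C}$, so there exist $c_1\in\mathscr{C}\cap\mathscr{F}(P,x)$ and $c_2\in\mathscr{C}\cap\mathscr{F}(P,y)$. Using the explicit description of generated filters in item \ref{genfilprop1} of Remark \ref{genfilprop}, together with $x^{k}\leq x^{j}$ for $k\geq j$, I can write $c_1\geq f_1\odot x^{m}$ and $c_2\geq f_2\odot y^{m}$ for a common exponent $m$ and elements $f_1,f_2\in P$. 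Setting $f=f_1\odot f_2\in P$ and using monotonicity of $\odot$ gives $c_1\vee c_2\geq (f\odot x^{m})\vee(f\odot y^{m})=f\odot(x^{m}\vee y^{m})$, the last equality by \ref{res1}.

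The crux is to show $x^{m}\vee y^{m}\in P$, and for this I would establish $(x\vee y)^{2m-1}\leq x^{m}\vee y^{m}$. Expanding the left side by the distributivity law \ref{res1} and commutativity of $\odot$ writes it as a join of $\odot$-products of $2m-1$ factors, each an $x$ or a $y$; a pigeonhole count shows every such product has at least $m$ copies of $x$ or at least $m$ copies of $y$, hence lies below $x^{m}$ or below $y^{m}$, so below $x^{m}\vee y^{m}$. Since $x\vee y\in P$ and $P$ is closed under $\odot$, the left side lies in $P$, and upward closure of $P$ forces $x^{m}\vee y^{m}\in P$; then $f\odot(x^{m}\vee y^{m})\in P$ and finally $c_1\vee c_2\in P$. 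As $\mathscr{C}$ is $\vee$-closed we have $c_1\vee c_2\in\mathscr{C}$, contradicting $P\cap\mathscr{C}=\emptyset$. I expect this power inequality, along with the bookkeeping of exponents that depends on commutativity of $\odot$ and on $x^{k}\leq x^{j}$, to be the only nontrivial step; the remainder is routine filter-closure and the Zorn argument.
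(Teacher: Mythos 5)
Your proof is correct. Note that the paper does not actually prove this theorem: it cites \citep[Theorem 4.28]{din0} (stated there for pseudo-BL algebras) and asserts the argument transfers to residuated lattices, so your write-up supplies the details the paper leaves implicit. Your Zorn step is fine, and the maximal element it produces is indeed maximal among \emph{all} filters avoiding $\mathscr{C}$, since any filter disjoint from $\mathscr{C}$ that contains $P$ automatically contains $F$. In the primeness step, the one nontrivial ingredient is your inequality $(x\vee y)^{2m-1}\leq x^{m}\vee y^{m}$, obtained by expanding with \ref{res1} and a pigeonhole count over the resulting $\odot$-words; this is a sound and even slightly sharper alternative to the inequality standardly used in this literature (and in the cited source), namely $(x\vee y)^{mn}\leq x^{m}\vee y^{n}$, which follows by induction from \ref{res2} --- the very property the paper records in Remark \ref{resproposition} and later uses in Lemma \ref{npri}. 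Both routes rely only on facts available in the paper's setting: integrality ($x\odot y\leq x$ because $y\leq 1$), monotonicity of $\odot$, the description of $\mathscr{F}(P,x)$ in Remark \ref{genfilprop}\ref{genfilprop1}, and upward closure of filters. So, modulo the choice of power inequality, your argument is essentially the standard one that the paper's citation points to.
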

\begin{corollary}\label{intprimfilt}
Let $F$ be a filter of a residuated lattice $\mathfrak{A}$ and $X$ be a subset of $A$. The following assertions hold:
\begin{enumerate}
\item  [(1) \namedlabel{intprimfilt1}{(1)}]  If $X\nsubseteq F$, there exists a prime filter $P$ such that $F\subseteq P$ and $X\nsubseteq P$;
\item  [(2) \namedlabel{intprimfilt2}{(2)}] $\mathscr{F}(X)=\bigcap \{P\in Spec(\mathfrak{A})|X\subseteq P\}$.
\end{enumerate}
\end{corollary}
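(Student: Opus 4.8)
The plan is to derive both parts directly from Theorem \ref{prfilth}, the existence theorem for a prime filter avoiding a prescribed $\vee$-closed set.

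For (1), since $X \nsubseteq F$ I would choose an element $x \in X$ with $x \notin F$ and take the singleton $\mathscr{C} = \{x\}$. This $\mathscr{C}$ is $\vee$-closed, because $x \vee x = x$, and since $x \notin F$ it does not meet $F$. Applying Theorem \ref{prfilth} then yields a filter $P$ with $F \subseteq P$ that is maximal with respect to not meeting $\{x\}$ and is moreover prime; in particular $x \notin P$, and as $x \in X$ this gives $X \nsubseteq P$, as required.

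For (2), the inclusion $\mathscr{F}(X) \subseteq \bigcap\{P \in Spec(\mathfrak{A}) | X \subseteq P\}$ is immediate: every prime filter $P$ with $X \subseteq P$ is in particular a filter containing $X$, and hence contains the least such filter $\mathscr{F}(X)$. For the reverse inclusion I would argue by contraposition. If $a \notin \mathscr{F}(X)$, then applying part (1) with the filter $\mathscr{F}(X)$ in the role of $F$ and the singleton $\{a\}$ in the role of $X$ (the hypothesis $\{a\} \nsubseteq \mathscr{F}(X)$ being exactly $a \notin \mathscr{F}(X)$) produces a prime filter $P$ with $\mathscr{F}(X) \subseteq P$ and $a \notin P$. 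Since $X \subseteq \mathscr{F}(X) \subseteq P$, this $P$ witnesses that $a$ lies outside the intersection, yielding $\bigcap\{P \in Spec(\mathfrak{A}) | X \subseteq P\} \subseteq \mathscr{F}(X)$.

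There is no genuine obstacle here; the only point demanding a moment's care is the degenerate case $\mathscr{F}(X) = A$. In that case no proper, hence no prime, filter contains $X$, so the intersection is taken over the empty family and equals $A$ by convention, matching $\mathscr{F}(X)$; correspondingly every $a \in A$ already lies in $\mathscr{F}(X)$, so the contrapositive step is vacuously satisfied. With this case noted, the two inclusions combine to give the asserted equality.
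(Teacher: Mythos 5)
Your proof is correct and follows essentially the same route as the paper: part (1) by applying Theorem \ref{prfilth} to the singleton $\vee$-closed set $\{x\}$ for some $x\in X\setminus F$, and part (2) by the trivial forward inclusion together with the contrapositive application of part (1) to $F=\mathscr{F}(X)$ and the singleton $\{a\}$. Your extra remark on the degenerate case $\mathscr{F}(X)=A$ (empty intersection convention) is a harmless refinement the paper leaves implicit.
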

\begin{proof}
\begin{enumerate}
  \item [\ref{intprimfilt1}:] Let $x\in X-F$. By taking $\mathscr{C}=\{x\}$ it follows by Theorem \ref{prfilth}.
  \item [\ref{intprimfilt2}:] Set $\sigma_{X}=\{P\in Spec(\mathfrak{A})|X\subseteq P\}$. Obviously, we have $\mathscr{F}(X)\subseteq \bigcap \sigma_{X}$. Now let $a\notin \mathscr{F}(X)$. By \ref{intprimfilt1} follows that there exits a prime filter $P$ containing $\mathscr{F}(X)$ such that $a\notin P$. It shows that $a\notin \bigcap \sigma_{X}$.
\end{enumerate}
\end{proof}

Let $\mathfrak{A}$ be a residuated lattice and $X$ be a subset of $A$. A prime filter $P$ is called a \textit{minimal prime filter belonging to $X$} or $X$-\textit{minimal prime filter} if $P$ is a minimal element in the set of prime filters containing $X$. The set of $X$-minimal prime filters of $\mathfrak{A}$ is denoted by $Min_{X}(\mathfrak{A})$. A prime filter $P$ is called a \textit{minimal prime} if $P\in Min_{\{1\}}(\mathfrak{A})$. The set of minimal prime filters of $\mathfrak{A}$ is denoted by $Min(\mathfrak{A})$.

In following we give an important characterization for minimal prime filters.
\begin{theorem}\label{1mineq}
Let $\mathfrak{A}$ be a residuated lattice and $F$ be a filter of $\mathfrak{A}$. A subset $P$ of $A$ is an $F$-minimal prime filter if and only if $P^{c}$ is a $\vee$-closed subset of $\mathfrak{A}$ which it is maximal with respect to the property of not meeting $F$.
\end{theorem}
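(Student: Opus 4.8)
The plan is to prove both implications by pivoting between $\vee$-closed sets and prime filters via Theorem \ref{prfilth}, using Remark \ref{primclos} to translate primeness of $P$ into $\vee$-closedness of $P^{c}$. In each direction the real work is squeezed into a single application of Theorem \ref{prfilth} followed by the relevant extremality hypothesis.

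For the forward implication, assume $P$ is an $F$-minimal prime filter. Then $P^{c}$ is $\vee$-closed by Remark \ref{primclos}, and since $F\subseteq P$ we have $P^{c}\cap F=\emptyset$, so $P^{c}$ does not meet $F$. To establish maximality, I would take any $\vee$-closed set $\mathscr{C}$ with $P^{c}\subseteq \mathscr{C}$ and $\mathscr{C}\cap F=\emptyset$, and apply Theorem \ref{prfilth} to obtain a prime filter $Q\supseteq F$ maximal with respect to not meeting $\mathscr{C}$. Since $P^{c}\subseteq \mathscr{C}$ forces $Q\cap P^{c}=\emptyset$, we get $Q\subseteq P$; the $F$-minimality of $P$ then yields $Q=P$. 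Finally $P\cap\mathscr{C}=Q\cap\mathscr{C}=\emptyset$ gives $\mathscr{C}\subseteq P^{c}$, whence $\mathscr{C}=P^{c}$.

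For the converse, suppose $P^{c}$ is $\vee$-closed and maximal with respect to not meeting $F$. First I would show $P$ is a prime filter containing $F$: applying Theorem \ref{prfilth} to the $\vee$-closed set $P^{c}$ produces a prime filter $Q\supseteq F$ with $Q\cap P^{c}=\emptyset$, hence $Q\subseteq P$ and $P^{c}\subseteq Q^{c}$; since $Q^{c}$ is $\vee$-closed and disjoint from $F$, the maximality of $P^{c}$ forces $Q^{c}=P^{c}$, i.e.\ $P=Q$ is prime and contains $F$. For minimality, any prime filter $P'$ with $F\subseteq P'\subseteq P$ satisfies $P^{c}\subseteq P'^{c}$, where $P'^{c}$ is $\vee$-closed and disjoint from $F$, so maximality again gives $P'^{c}=P^{c}$ and thus $P'=P$.

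The routine parts are the passages back and forth through Remark \ref{primclos} and the set-theoretic complementation. The one step deserving care is ensuring that the prime filter supplied by Theorem \ref{prfilth} actually coincides with $P$ rather than merely being comparable to it; in each direction this is precisely where the maximality (or minimality) hypothesis is consumed, and it is the crux on which the equivalence turns.
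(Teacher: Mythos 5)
Your proof is correct, and it departs from the paper's argument in one noteworthy way. Your second direction (maximal $\vee$-closed complement $\Rightarrow$ $F$-minimal prime) coincides with the paper's: apply Theorem \ref{prfilth} to $P^{c}$, get a prime $Q\supseteq F$ with $Q\subseteq P$, and use maximality of $P^{c}$ to force $Q=P$; in fact you improve on the paper here, since the paper's ``moreover it shows that $P$ is an $F$-minimal prime filter'' is left unjustified, whereas you spell out the minimality argument (any prime $P'$ with $F\subseteq P'\subseteq P$ has $P'^{c}=P^{c}$ by maximality). The real divergence is in the other direction. The paper proves ``$F$-minimal prime $\Rightarrow$ maximal'' by invoking Lemma \ref{0prfiltth} (a Zorn-type extension) to enlarge $P^{c}$ to a maximal $\vee$-closed set $\mathscr{C}$ disjoint from $F$, then feeding $\mathscr{C}$ back through the already-proved converse direction to conclude that $\mathscr{C}^{c}$ is an $F$-minimal prime filter inside $P$, whence $\mathscr{C}^{c}=P$ by minimality. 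You instead test maximality directly: given any $\vee$-closed $\mathscr{C}\supseteq P^{c}$ disjoint from $F$, you apply Theorem \ref{prfilth} to $\mathscr{C}$ to manufacture a prime $Q\supseteq F$ disjoint from $\mathscr{C}$, deduce $Q\subseteq P$, pin down $Q=P$ by minimality, and conclude $\mathscr{C}\subseteq P^{c}$. Your route makes the two implications logically independent and avoids the extra appeal to Lemma \ref{0prfiltth}; the paper's route is more economical in that it recycles the first implication, at the cost of chaining the two directions together and an additional Zorn extension.
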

\begin{proof}
Let $P$ be a subset of $A$ such that $P^{c}$ is a $\vee$-closed subset of $\mathfrak{A}$
which is maximal w.r.t the property of not meeting $F$. By Proposition \ref{prfilth} there exists a prime filter $Q$ such that $Q$ not meeting $P^{c}$ and so $Q\subseteq P$. By \textsc{Remark} \ref{primclos}, $Q^{c}$ is a $\vee$-closed subset of $\mathfrak{A}$ and by hypothesis we have $P^{c}\subseteq Q^{c}$ and $Q^{c}\cap F=\emptyset$. So by maximality of $P^{c}$ we deduce that $P^{c}=Q^{c}$ and it means that $P=Q$. It shows that $P$ is a prime filter and moreover it shows that $P$ is an $F$-minimal prime filter.

Conversely, let $P$ be an $F$-minimal prime filter of $\mathfrak{A}$. By \textsc{Remark} \ref{primclos}, $P^{c}$ is a $\vee$-closed subset of $\mathfrak{A}$ such that $P^{c}\cap F=\emptyset$. By using Lemma \ref{0prfiltth} we can obtain a $\vee$-closed subset $\mathscr{C}$ of $\mathfrak{A}$ such that it is maximal with respect to the property of not meeting $F$. By case just proved, $\mathscr{C}'$ is an $F$-minimal prime filter such that $\mathscr{C}'\cap P^{c}=\emptyset$ and it implies $\mathscr{C}'\subseteq P$. By hypothesis $\mathscr{C}=P^{c}$ and it shows that $P^{c}$ is a $\vee$-closed subset of $\mathfrak{A}$ such that it is maximal with respect to the property of not meeting $F$.
\end{proof}
\begin{corollary}\label{primeminimal}
Let $\mathfrak{A}$ be a residuated lattice, $X$ be a subset of $A$ and $P$ be a prime filter containing $X$. Then there exists an $X$-minimal prime filter contained in $P$.
\end{corollary}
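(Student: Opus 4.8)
The plan is to reduce the arbitrary subset $X$ to the filter it generates and then feed that filter into the maximal-extension machinery. First I would set $F:=\mathscr{F}(X)$ and observe that, since $\mathscr{F}(X)$ is the least filter containing $X$ and every prime filter is in particular a filter, a prime filter contains $X$ exactly when it contains $F$. Hence the prime filters containing $X$ and those containing $F$ coincide, and therefore the $X$-minimal prime filters are precisely the $F$-minimal prime filters. This trade is what makes the rest work: both Lemma \ref{0prfiltth} and Theorem \ref{1mineq} are phrased for a genuine filter, so I cannot apply them to $X$ directly, but I can apply them to $F$.

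Next, from $X\subseteq P$ together with the primeness (in particular properness) of $P$ I obtain $F\subseteq P$, so that $P^{c}\cap F=\emptyset$. By \textsc{Remark} \ref{primclos} the complement $P^{c}$ is a $\vee$-closed subset of $\mathfrak{A}$, and it is nonempty since $0\in P^{c}$. Thus $P^{c}$ is a $\vee$-closed set disjoint from the filter $F$, and Lemma \ref{0prfiltth} applies to yield a $\vee$-closed subset $\textsf{C}$ with $P^{c}\subseteq \textsf{C}$ which is maximal with respect to the property of not meeting $F$.

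Finally, Theorem \ref{1mineq} identifies $Q:=\textsf{C}^{c}$ as an $F$-minimal prime filter, which by the first paragraph is exactly an $X$-minimal prime filter. Since $P^{c}\subseteq \textsf{C}$, taking complements gives $Q=\textsf{C}^{c}\subseteq P$, so $Q$ is the desired $X$-minimal prime filter contained in $P$. I do not expect a serious obstacle here, since the argument is a direct assembly of the two cited results; the one point that genuinely needs care is the reduction in the first step, namely verifying that $X$-minimality and $\mathscr{F}(X)$-minimality coincide, so that the filter hypotheses of Lemma \ref{0prfiltth} and Theorem \ref{1mineq} are legitimately available.
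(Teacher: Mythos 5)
Your proposal is correct and follows essentially the same route as the paper's own proof: pass from $X$ to $\mathscr{F}(X)$, apply \textsc{Remark} \ref{primclos} to get that $P^{c}$ is $\vee$-closed and disjoint from $\mathscr{F}(X)$, extend it via Lemma \ref{0prfiltth} to a maximal $\vee$-closed set, and invoke Theorem \ref{1mineq} to identify its complement as the desired minimal prime filter inside $P$. The only difference is that you make explicit the identification of $X$-minimal with $\mathscr{F}(X)$-minimal prime filters, which the paper uses silently; that is a welcome clarification, not a divergence.
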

\begin{proof}
By \textsc{Remark} \ref{primclos}, $P^{c}$ is a $\vee$-closed subset of $\mathfrak{A}$ such that $P^{c}\cap \mathscr{F}(X)=\emptyset$. By using Lemma \ref{0prfiltth} we can obtain a $\vee$-closed subset $\mathscr{C}$ of $\mathfrak{A}$ containing $P^{c}$ such that it is maximal with respect to the property of not meeting $\mathscr{F}(X)$. By Theorem \ref{1mineq}, $\mathscr{C}'$ is an $\mathscr{F}(X)$-minimal prime filter which it is contained in $P$.
\end{proof}

The following corollary should be compared with Corollary \ref{intprimfilt}.
\begin{corollary}\label{mininters}
Let $F$ be a filter of a residuated lattice $\mathfrak{A}$ and $X$ be a subset of $A$. The following assertions hold:
\begin{enumerate}
\item  [(1) \namedlabel{mininters1}{(1)}]  If $X\nsubseteq F$, there exists an $F$-minimal prime filter $\mathfrak{m}$ such that $X\nsubseteq \mathfrak{m}$;
\item  [(2) \namedlabel{mininters2}{(2)}] $\mathscr{F}(X)=\bigcap Min_{X}(\mathfrak{A})$.
\end{enumerate}
\end{corollary}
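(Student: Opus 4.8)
The plan is to treat this corollary as the minimal-prime refinement of Corollary \ref{intprimfilt}, built entirely on the characterization of $F$-minimal prime filters in Theorem \ref{1mineq}, together with Lemma \ref{0prfiltth} and Corollary \ref{primeminimal}. The one conceptual point worth fixing at the outset is that, for filters, containing $X$ and containing $\mathscr{F}(X)$ are equivalent, so the $X$-minimal prime filters are precisely the $\mathscr{F}(X)$-minimal prime filters; this lets me pass freely between $Min_X(\mathfrak{A})$ and the maximal $\vee$-closed sets avoiding $\mathscr{F}(X)$ supplied by Theorem \ref{1mineq}.

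For part \ref{mininters1}, since $X \nsubseteq F$ I would pick $x \in X \setminus F$ and note that the singleton $\{x\}$ is $\vee$-closed (as $x \vee x = x$) and disjoint from $F$. Lemma \ref{0prfiltth} then enlarges $\{x\}$ to a $\vee$-closed set $\textsf{C}$ maximal with respect to not meeting $F$, and Theorem \ref{1mineq} identifies $\mathfrak{m} := \textsf{C}^{c}$ as an $F$-minimal prime filter. Because $x \in \textsf{C} = \mathfrak{m}^{c}$, the element $x$ witnesses $X \nsubseteq \mathfrak{m}$, finishing this part.

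For part \ref{mininters2}, the inclusion $\mathscr{F}(X) \subseteq \bigcap Min_X(\mathfrak{A})$ is immediate, since every member of $Min_X(\mathfrak{A})$ contains $X$ and hence $\mathscr{F}(X)$. For the reverse inclusion I would take $a \notin \mathscr{F}(X)$ and apply part \ref{intprimfilt1} of Corollary \ref{intprimfilt} with the filter $\mathscr{F}(X)$ and the set $\{a\}$ (legitimate since $a \notin \mathscr{F}(X)$) to produce a prime filter $P \supseteq \mathscr{F}(X)$ with $a \notin P$; as $X \subseteq P$, Corollary \ref{primeminimal} then supplies an $X$-minimal prime filter $\mathfrak{m} \subseteq P$, and $\mathfrak{m} \subseteq P$ forces $a \notin \mathfrak{m}$, so $a \notin \bigcap Min_X(\mathfrak{A})$.

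I do not expect a serious obstacle, since the real content has already been isolated in Theorem \ref{1mineq} and Corollary \ref{primeminimal}. The only place that needs care is the reverse inclusion in part \ref{mininters2}: one must ensure that $a$ is excluded not merely by some prime filter above $\mathscr{F}(X)$ but by a genuinely minimal one, and the clean way to guarantee this is to first separate $a$ by an arbitrary prime filter and then descend to a minimal prime beneath it, rather than attempting to construct the minimal prime directly. A direct alternative — extending $\{a\}$ to a maximal $\vee$-closed set avoiding $\mathscr{F}(X)$ and complementing via Theorem \ref{1mineq}, exactly as in part \ref{mininters1} — also works and avoids invoking Corollary \ref{primeminimal}.
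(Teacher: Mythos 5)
Your proof is correct and follows essentially the same route as the paper: for part \ref{mininters2} you give the paper's argument in contrapositive form (separate $a$ from $\mathscr{F}(X)$ by a prime filter via Corollary \ref{intprimfilt}\ref{intprimfilt1}, then descend to an $X$-minimal prime inside it via Corollary \ref{primeminimal}), which is logically the same descent the paper performs. For part \ref{mininters1} you merely inline the machinery — extending $\{x\}$ to a maximal $\vee$-closed set avoiding $F$ by Lemma \ref{0prfiltth} and complementing via Theorem \ref{1mineq} — where the paper cites Corollary \ref{intprimfilt}\ref{intprimfilt1} together with Corollary \ref{primeminimal}; both reduce to the same Zorn-type construction, so the difference is cosmetic.
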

\begin{proof}
\begin{enumerate}
  \item [\ref{mininters1}:] It is a direct consequence of Corollary \ref{intprimfilt}\ref{intprimfilt1} and Corollary \ref{primeminimal}.
  \item [\ref{mininters2}:] Set $\sigma_{X}=\{P\in Spec(\mathfrak{A})|X\subseteq P\}$. By Corollary \ref{intprimfilt}\ref{intprimfilt2}, it is sufficient to show that $\bigcap Min_{X}(\mathfrak{A})=\bigcap \sigma_{X}$. It is obvious that $\bigcap \sigma_{X}\subseteq \bigcap Min_{X}(\mathfrak{A})$. Otherwise, let $a\in \bigcap Min_{X}(\mathfrak{A})$ and $P$ be an arbitrary element of $\sigma_{X}$. By Corollary \ref{primeminimal} there exists an $X$-minimal prime filter $\mathfrak{m}$ contained in $P$. Hence, $a\in \mathfrak{m}\subseteq P$ and it states that $\bigcap Min_{X}(\mathfrak{A})\subseteq \bigcap \sigma_{X}$.
\end{enumerate}
\end{proof}

Let $\mathfrak{A}$ be a residuated lattice and $F$ be a filter of $\mathfrak{A}$. Recalling that \citep{rascoan} for any subset $X$ of $A$ the \textit{coannihilator of $X$ belonging to $F$} (or, $F$-\textit{coannihilator of $X$}) is denoted by $(F:X)$ and defined as follow:
\[(F:X)=\{a\in A|x\vee a\in F,\forall x\in X\}.\]
If $X=\{x\}$, we write $(F:x)$ instead of $(F:X)$ and in case $F=\{1\}$, we write $X^{\perp}$ instead of $(F:X)$.
\begin{example}\label{ppfex0}
Consider the residuated lattice $\mathfrak{A}_6$ from Example \ref{rex2}. With notations of Example \ref{fex2} we have $(F_4:0)=F_4$, $(F_4:a)=F_4$, $(F_4:b)=F_4$, $(F_4:c)=F_5$, $(F_4:d)=F_5$ and $(F_4:1)=F_5$.
\end{example}
\begin{remark}
Let $\mathfrak{A}$ be a residuated lattice, $F$ be a filter of $\mathfrak{A}$ and $X$ be a subset of $A$. One can see that $(F:X)$ is the relative pseudo-complement of $\mathscr{F}(X)$ with respect to $F$ in the lattice $\mathscr{F}(\mathfrak{A})$.
\end{remark}

In the following proposition we recall some properties of coannihilators.
\begin{proposition}\citep[Proposition 3.1]{rascoan}\label{1fxpro}
Let $\mathfrak{A}$ be a residuated lattice and $F$ be a filter of $\mathfrak{A}$. The following assertions hold for any $X,Y\subseteq A$:
\begin{enumerate}
\item  [$(1)$ \namedlabel{1fxpro1}{$(1)$}] $X\subseteq (F:Y)$ implies $Y\subseteq (F:X)$;
\item  [$(2)$ \namedlabel{1fxpro2}{$(2)$}] $(F:X)=A$ if and only if $X\subseteq F$;
\end{enumerate}
\end{proposition}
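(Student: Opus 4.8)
The plan is to unwind the definition $(F:X)=\{a\in A\mid x\vee a\in F,~\forall x\in X\}$ in each case and exploit two elementary facts: the commutativity of $\vee$, and the fact that a filter is an upper set while $0$ is the least element of $\ell(\mathfrak{A})$, so that $x\vee 0=x$. No deep machinery is needed; both parts are direct quantifier manipulations.

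For part (1) I would simply observe that the defining membership condition is symmetric in the two sets. Suppose $X\subseteq (F:Y)$. By definition this means that for every $x\in X$ we have $x\in (F:Y)$, that is, $y\vee x\in F$ for all $y\in Y$. Since $\vee$ is commutative, $y\vee x=x\vee y$, so for each fixed $y\in Y$ we have $x\vee y\in F$ for all $x\in X$; this is precisely the statement $y\in (F:X)$. As $y\in Y$ was arbitrary, $Y\subseteq (F:X)$. Thus the only ingredient is commutativity of the join, and there is essentially no obstacle.

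For part (2) I would argue the two implications separately. For the forward direction, assume $(F:X)=A$; in particular $0\in (F:X)$, which by definition gives $x\vee 0\in F$ for every $x\in X$. Since $0$ is the least element of $\ell(\mathfrak{A})$ we have $x\vee 0=x$, whence $x\in F$ for all $x\in X$, i.e. $X\subseteq F$. For the converse, assume $X\subseteq F$ and let $a\in A$ be arbitrary; for each $x\in X$ we have $x\in F$, and since $F$ is a filter (so $x\in F$ together with $a\in A$ forces $x\vee a\in F$) we obtain $x\vee a\in F$ for every $x\in X$, that is $a\in (F:X)$. As $a$ was arbitrary, $(F:X)=A$.

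The whole argument is a routine verification; the only point worth flagging is the choice of the witness $a=0$ in the forward direction of (2), which converts the hypothesis $(F:X)=A$ into the identity $x\vee 0=x$ and immediately yields $X\subseteq F$, with the converse relying dually on the upper-set axiom for filters.
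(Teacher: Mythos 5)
Your proof is correct. The paper gives no proof of this proposition at all---it is recalled from \citep[Proposition 3.1]{rascoan}---and your direct unwinding of the definition (commutativity of $\vee$ for part (1); the witness $a=0$ plus the filter axiom that $x\in F$ and $a\in A$ force $x\vee a\in F$ for part (2)) is precisely the routine verification that the citation presupposes.
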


Let $\mathfrak{A}$ be a residuated lattice and $F$ be a filter of $\mathfrak{A}$. Set $\Gamma_{F}(\mathfrak{A})=\{(F:X)|X\subseteq A\}$. The elements of $\Gamma_{F}(\mathfrak{A})$ are called \textit{$F$-coannihilators} of $\mathfrak{A}$. We recall that $(\Gamma_{F}(\mathfrak{A});\cap,\vee^{\Gamma_{F}},F,A)$ is a complete Boolean lattice, where for any $\mathscr{G}\subseteq \Gamma_{F}(\mathfrak{A})$ we have $\vee^{\Gamma_{F}} \mathscr{F}=(F:(F:\cup \mathscr{G}))$ \citep[Proposition 3.13]{rascoan}.
\begin{proposition}\citep[Proposition 3.15]{rascoan}\label{4fxpro}
Let $\mathfrak{A}$ be a residuated lattice and $F$ be a filter of $\mathfrak{A}$. The following assertions hold for any $x,y\in A$:
\begin{enumerate}
\item  [$(1)$ \namedlabel{4fxpro1}{$(1)$}] $x\leq y$ implies $(F:x)\subseteq (F:y)$;
\item  [$(2)$ \namedlabel{4fxpro2}{$(2)$}] $(F:x)\cap (F:y)=(F:x\odot y)$;
\item  [$(3)$ \namedlabel{4fxpro3}{$(3)$}] $(F:(F:x))\cap (F:(F:y))=(F:(F:x\vee y))$;
\item  [$(4)$ \namedlabel{4fxpro4}{$(4)$}] $(F:x)\veebar (F:y)\subseteq (F:x)\vee^{\Gamma_F} (F:y)=(F:x\vee y)$.
\end{enumerate}
\end{proposition}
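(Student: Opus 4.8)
The plan is to dispatch the four items in order of difficulty, first recording the formal properties of the operator $X\mapsto(F:X)$ that do the real work in \ref{4fxpro3} and \ref{4fxpro4}. Directly from the definition $(F:X)=\{a\mid x\vee a\in F\text{ for all }x\in X\}$ together with the fact that a filter is upward closed (if $u\in F$ and $u\leq v$ then $v=u\vee v\in F$), the assignment $X\mapsto(F:X)$ is antitone and satisfies $(F:U\cup V)=(F:U)\cap(F:V)$; moreover the adjunction in Proposition \ref{1fxpro}\ref{1fxpro1} yields, in the usual Galois-connection fashion, both the extensivity $X\subseteq(F:(F:X))$ and the collapse $(F:(F:(F:X)))=(F:X)$. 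Item \ref{4fxpro1} is then immediate: $a\in(F:x)$ means $x\vee a\in F$, and $x\leq y$ gives $x\vee a\leq y\vee a$, whence $y\vee a\in F$ by upward closure. For \ref{4fxpro2}, the inclusion $(F:x\odot y)\subseteq(F:x)\cap(F:y)$ follows from $x\odot y\leq x$ and $x\odot y\leq y$ (since $x\odot y\leq x\odot 1=x$), which force $(x\odot y)\vee a\leq x\vee a$ and $(x\odot y)\vee a\leq y\vee a$; the reverse inclusion uses \ref{res2}: if $x\vee a,y\vee a\in F$ then $(x\vee a)\odot(y\vee a)\in F$, and $(x\odot y)\vee a\geq(x\vee a)\odot(y\vee a)$ gives $(x\odot y)\vee a\in F$.

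For \ref{4fxpro3} write $c(z):=(F:(F:z))$ for brevity. The inclusion $c(x\vee y)\subseteq c(x)\cap c(y)$ is formal: $x\leq x\vee y$ gives $(F:x)\subseteq(F:x\vee y)$ by \ref{4fxpro1}, and applying the antitone operator $(F:-)$ reverses this to $c(x\vee y)\subseteq c(x)$, and symmetrically $c(x\vee y)\subseteq c(y)$. The reverse inclusion $c(x)\cap c(y)\subseteq c(x\vee y)$ is the heart of the proposition and the step I expect to be the main obstacle, since it cannot be read off from the lattice identities and instead requires bootstrapping the two double-coannihilator memberships against each other. Fix $a\in c(x)\cap c(y)$ and an arbitrary $b\in(F:x\vee y)$, that is $(x\vee y)\vee b\in F$; the goal is $b\vee a\in F$. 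From $y\vee(x\vee b)=(x\vee y)\vee b\in F$ we get $x\vee b\in(F:y)$, so $a\in(F:(F:y))$ yields $(x\vee b)\vee a\in F$; reading this as $x\vee(b\vee a)\in F$ shows $b\vee a\in(F:x)$, so $a\in(F:(F:x))$ yields $(b\vee a)\vee a=b\vee a\in F$. As $b$ was arbitrary, $a\in c(x\vee y)$, establishing the equality.

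Finally, \ref{4fxpro4}: by the description of $\vee^{\Gamma_F}$ recalled just before the proposition, $(F:x)\vee^{\Gamma_F}(F:y)=(F:(F:((F:x)\cup(F:y))))$. The first inclusion is extensivity: $(F:x)\cup(F:y)\subseteq(F:(F:((F:x)\cup(F:y))))$, and since the right-hand side is a coannihilator and hence a filter, it contains the generated filter $(F:x)\veebar(F:y)=\mathscr{F}((F:x)\cup(F:y))$. For the equality, compute the inner term by $(F:((F:x)\cup(F:y)))=(F:(F:x))\cap(F:(F:y))$, which by \ref{4fxpro3} equals $(F:(F:x\vee y))$; applying $(F:-)$ to both sides and using the collapse $(F:(F:(F:x\vee y)))=(F:x\vee y)$ gives $(F:x)\vee^{\Gamma_F}(F:y)=(F:x\vee y)$, completing the proof.
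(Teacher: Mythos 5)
Your proof is correct. Note, however, that the paper does not prove this proposition at all: it is quoted from \citep[Proposition 3.15]{rascoan}, so there is no internal argument to compare yours against; what you have produced is a self-contained verification from the facts this paper does state, and every step checks out. Items \ref{4fxpro1} and \ref{4fxpro2} follow exactly as you say from upward closure of filters, closure under $\odot$, and \ref{res2}. Your treatment of \ref{4fxpro3} is the genuinely nontrivial part, and the bootstrapping argument is sound: from $a\in(F:(F:x))\cap(F:(F:y))$ and $(x\vee y)\vee b\in F$ you correctly extract $x\vee b\in(F:y)$, hence $(x\vee b)\vee a\in F$, hence $b\vee a\in(F:x)$, hence $b\vee a\in F$; this uses nothing beyond the definition of the coannihilator and associativity/idempotence of $\vee$. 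For \ref{4fxpro4}, your reduction via the Galois-connection identities (antitonicity, extensivity $X\subseteq(F:(F:X))$ from Proposition \ref{1fxpro}\ref{1fxpro1}, the triple collapse, and $(F:U\cup V)=(F:U)\cap(F:V)$) together with the paper's description $\vee^{\Gamma_F}\mathscr{G}=(F:(F:\cup\mathscr{G}))$ is exactly the right mechanism, and the one point you pass over quickly --- that $(F:(F:((F:x)\cup(F:y))))$ is a filter, so that it contains the generated filter $(F:x)\veebar(F:y)$ --- is covered by the paper's remark that every $(F:X)$ is the relative pseudo-complement of $\mathscr{F}(X)$ with respect to $F$ inside $\mathscr{F}(\mathfrak{A})$, hence a filter. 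So the only improvement to suggest is to cite that remark explicitly at that step rather than asserting ``a coannihilator is a filter'' without reference.
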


Let $\mathfrak{A}$ be a residuated lattice. We set $\gamma_{F}(\mathfrak{A})=\{(F:x)|x\in A\}$. The elements of $\gamma_{F}(\mathfrak{A})$ are called \textit{$F$-coannulets} of $\mathfrak{A}$. Applying Proposition \ref{4fxpro1}, it follows that $\gamma_{F}(\mathfrak{A})$ is a sublattice of $\Gamma_{F}(\mathfrak{A})$ \citep[Theorem 3.16]{rascoan}.
\section{$\omega$-filters}\label{sec3}

In this section we introduce and investigate the notion of $\omega$-filters in a residuated lattice.
\begin{definition}\label{geomegafilter}
Let $\mathfrak{A}$ be a residuated lattice and $F$  be a filter of $\mathfrak{A}$. For any subset $X$ of $A$ we set:
\[\omega_{F}(X)=\{a\in A|x\vee a\in F,\exists x\in X\}.\]
In the following, $\omega_{\{1\}}(X)$ shall be denoted by $\omega(X)$.
\end{definition}
\begin{remark}
The notions of $O(P)$ for a prime ideal $P$ and its dual, $\omega(P)$ for a prime filter $P$, in a distributive lattice with $0$ are introduced in \cite{cor}, where it is shown that $O(P)$ is the intersection of all minimal prime ideals contained in $P$ \citep[Proposition 2.2]{cor}. In \cite{cor1}, these ideals are employed as examples of $\alpha$-ideals. In \cite{cor2}, the notion of $O$-ideals in bounded distributive lattices are introduced and their properties by means of congruence relations are applied for obtaining a sheaf representation (by a ``sheaf representation" of a bounded distributive lattice $\mathfrak{A}$ the author mean a sheaf representation whose base space is $Space(\mathfrak{A})$ and whose stalks are the quotients $\mathfrak{A}/O(P)$, where P is a prime ideal).  L. Leu\c{s}tean \citep{leus} introduced the notion of $O$-filters in BL-algebras as the dual of $o$-ideals studied by Cornish. $O$-ideals are the lattice version of the following ideals in rings: if $R$ is a ring, then $O(P)=\{a\in R|~as=0,~ \textrm{for~some}~s\in R\setminus P\}$, where $P$ is a prime ideal of $R$. $O$-ideals are used for obtaining sheaf representations of different classes of rings \citep{bir,bir1,hof}.
\end{remark}

Let $\mathfrak{A}$ be a residuated lattice and $F$ be a filter of $\mathfrak{A}$. A subset $X$ of $\mathfrak{A}$ shall be called \textit{$F$-dense} if $(F:X)=F$. The set of all $F$-dense elements of $\mathfrak{A}$ shall be denoted by $\mathfrak{D}_{F}(\mathfrak{A})$. By Proposition \ref{4fxpro}(\ref{4fxpro1} and \ref{4fxpro4}) follows that $\mathfrak{D}_{F}(\mathfrak{A})$ is an ideal of $\ell(\mathfrak{A})$.

\begin{proposition}\label{1propdiv}
Let $\mathfrak{A}$ be a residuated lattice, $F,G$  be filters and $X,Y$ be subsets of $A$. The following assertions hold:
\begin{enumerate}
\item  [$(1)$ \namedlabel{1propdiv1}{$(1)$}] $\omega_{F}(X)=\cup_{x\in X}(F:x)$;
\item  [$(2)$ \namedlabel{1propdiv2}{$(2)$}] $\omega_{F}(X)=\{a\in A|(F:a)\cap X\neq\emptyset\}$;
\item  [$(3)$ \namedlabel{1propdiv3}{$(3)$}] $F\subseteq \omega_{F}(X)$;
\item  [$(4)$ \namedlabel{1propdiv4}{$(4)$}] $X\subseteq Y$ implies $\omega_{F}(X)\subseteq \omega_{F}(Y)$;
\item  [$(5)$ \namedlabel{1propdiv5}{$(5)$}] $F\subseteq G$ implies $\omega_{F}(X)\subseteq \omega_{G}(X)$;
\item  [$(6)$ \namedlabel{1propdiv6}{$(6)$}] $\omega_{F}(X)=A$ if and only if $F\cap X\neq \emptyset$;
\item  [$(7)$ \namedlabel{1propdiv7}{$(7)$}] $\omega_{F}(X)=F$ if and only if $X\subseteq \mathfrak{D}_{F}(\mathfrak{A})$.
\end{enumerate}
\end{proposition}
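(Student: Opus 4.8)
The plan is to establish each item by directly unwinding Definition~\ref{geomegafilter}, leaning on three elementary facts used throughout: every filter is upward closed for $\vee$ (if $a\in F$ and $b\in A$ then $b\vee a\geq a$ forces $b\vee a\in F$), the join is commutative, and consequently $F\subseteq (F:x)$ for every $x\in A$.

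For \ref{1propdiv1} I would simply observe that $a\in\omega_{F}(X)$ means $x\vee a\in F$ for some $x\in X$, i.e. $a\in (F:x)$ for some $x\in X$, which is exactly membership in $\cup_{x\in X}(F:x)$. Item \ref{1propdiv2} is the same rewriting read from the other side: by commutativity $x\vee a\in F$ says $x\in (F:a)$, so the existence of such an $x\in X$ is precisely $(F:a)\cap X\neq\emptyset$. Items \ref{1propdiv4} and \ref{1propdiv5} are immediate monotonicity statements: enlarging $X$ enlarges the supply of witnesses $x$, while enlarging $F$ relaxes the condition $x\vee a\in F$, so in both cases a witness for the smaller datum is a witness for the larger. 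For \ref{1propdiv3} I would pick any $x\in X$ and note that $a\in F$ gives $x\vee a\geq a\in F$, hence $x\vee a\in F$ and $a\in\omega_{F}(X)$.

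The two slightly less mechanical parts are \ref{1propdiv6} and \ref{1propdiv7}. For \ref{1propdiv6}, the reverse implication proceeds as in \ref{1propdiv3}: a common element $y\in F\cap X$ serves as a universal witness, since $y\vee a\geq y\in F$ for every $a$. For the forward implication I would test the hypothesis $\omega_{F}(X)=A$ at the element $0$: from $0\in\omega_{F}(X)$ we obtain some $x\in X$ with $x=x\vee 0\in F$, that is $x\in F\cap X$. For \ref{1propdiv7}, combining \ref{1propdiv1} with the standing inclusion $F\subseteq (F:x)$ gives $\omega_{F}(X)=\cup_{x\in X}(F:x)$ with each term containing $F$; thus $\omega_{F}(X)=F$ forces $(F:x)=F$ for every $x\in X$, which is exactly $X\subseteq\mathfrak{D}_{F}(\mathfrak{A})$, and conversely if every $(F:x)$ equals $F$ the union collapses to $F$.

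I do not anticipate a genuine obstacle here; the only point requiring care is that \ref{1propdiv3}, \ref{1propdiv7}, and the collapse of the union in the converse of \ref{1propdiv7} tacitly require $X\neq\emptyset$, which I would either assume as a running convention or flag explicitly. The argument is otherwise a sequence of definitional equivalences, and the choice of $0$ as the test element in \ref{1propdiv6} is the one mildly clever move.
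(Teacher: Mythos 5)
Your proof is correct and takes essentially the same route as the paper: it treats (1)--(5) as definitional unwindings (exactly what the paper dismisses as ``routine''), proves the forward direction of (6) by testing at $0$ precisely as the paper does via $0\in(F:x)$ and $(F:0)=F$, and proves (7) by the same sandwich $F\subseteq(F:x)\subseteq\omega_{F}(X)=F$. Your caveat that (3) and (7) tacitly require $X\neq\emptyset$ is a fair observation that the paper silently glosses over, but it does not change the substance of the argument.
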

\begin{proof}
We only prove the cases \ref{1propdiv6} and \ref{1propdiv7}, because the other cases can be proved in a routine way.
\begin{enumerate}
  \item [\ref{1propdiv6}] If $\omega_{F}(X)=A$, then $0\in \omega_{F}(X)$ and it implies that $0\in (F:x)$ for some $x\in X$. So $x\in (F:0)=F$ and it means that $F\cap X\neq \emptyset$. Otherwise, if $x\in F\cap X$, then we have $(F:x)=A$ and it follows that $\omega_{F}(X)=A$.
  \item [\ref{1propdiv7}] Let $\omega_{F}(X)=F$ and $x\in X$. So we have $F\subseteq (F:x)\subseteq \omega_{F}(X)=F$ and it states that $x\in \mathfrak{D}_{F}(\mathfrak{A})$. Conversely, $X\subseteq \mathfrak{D}_{F}(\mathfrak{A})$ states that $(F:x)=F$ for any $x\in X$ and it follows that $\omega_{F}(X)=F$.
\end{enumerate}
\end{proof}
\begin{proposition}\label{2propdiv}
Let $\mathfrak{A}$ be a residuated lattice and $\mathscr{C}$ be a $\vee$-closed subset of $\mathfrak{A}$. Then $\omega_{F}(\mathscr{C})$ is a filter.
\end{proposition}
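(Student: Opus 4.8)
The plan is to verify directly the two defining conditions of a filter for the set $\omega_F(\mathscr{C})$, namely the absorption condition ($a\in \omega_F(\mathscr{C})$ and $b\in A$ imply $a\vee b\in \omega_F(\mathscr{C})$) and closure under $\odot$, together with non-emptiness. Non-emptiness is immediate: since $\mathscr{C}$ is non-void, Proposition \ref{1propdiv}\ref{1propdiv3} gives $F\subseteq \omega_F(\mathscr{C})$, so in particular $1\in \omega_F(\mathscr{C})$. It will be convenient throughout to use the description $\omega_F(\mathscr{C})=\cup_{c\in\mathscr{C}}(F:c)$ from Proposition \ref{1propdiv}\ref{1propdiv1}, so that membership of $a$ is witnessed by some $c\in\mathscr{C}$ with $c\vee a\in F$.

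For the absorption condition, suppose $a\in\omega_F(\mathscr{C})$, so there is $c\in\mathscr{C}$ with $c\vee a\in F$, and let $b\in A$. Then $c\vee(a\vee b)=(c\vee a)\vee b\geq c\vee a$, and since $F$ is upward closed this forces $c\vee(a\vee b)\in F$; hence $a\vee b\in\omega_F(\mathscr{C})$, witnessed by the same $c$. (The same observation shows that $\omega_F(\mathscr{C})$ is upward closed.)

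The main step, and the only place where the hypothesis that $\mathscr{C}$ is $\vee$-closed enters, is closure under $\odot$. Given $a,b\in\omega_F(\mathscr{C})$, I would pick $c_1,c_2\in\mathscr{C}$ with $c_1\vee a\in F$ and $c_2\vee b\in F$, and set $c:=c_1\vee c_2$, which lies in $\mathscr{C}$ by $\vee$-closedness. The idea is to expand the product $(c_1\vee a)\odot(c_2\vee b)$, which belongs to $F$ because $F$ is closed under $\odot$, using the distributivity law \ref{res1} twice; this yields $(c_1\odot c_2)\vee(a\odot c_2)\vee(c_1\odot b)\vee(a\odot b)\in F$. Since $x\odot y\le x\wedge y$ holds in every residuated lattice, each of the first three joinands is dominated by $c_1\vee c_2=c$, so the whole expression lies below $c\vee(a\odot b)$. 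Upward closure of $F$ then gives $c\vee(a\odot b)\in F$, i.e.\ $a\odot b\in\omega_F(\mathscr{C})$ with witness $c$.

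I expect the only mild obstacle to be the bookkeeping in the $\odot$-step: expanding the product correctly via \ref{res1} and checking that the three cross-terms are absorbed by $c$. Everything else reduces to the upward closure of $F$ and the union formula of Proposition \ref{1propdiv}\ref{1propdiv1}, so no genuine difficulty should arise.
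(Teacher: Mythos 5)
Your proof is correct, and its skeleton matches the paper's: both verify $1\in\omega_{F}(\mathscr{C})$ via $F\subseteq\omega_{F}(\mathscr{C})$, both handle the join condition by enlarging the witness's join, and both prove $\odot$-closure by taking witnesses $c_1,c_2\in\mathscr{C}$ for $a,b$ and using $\vee$-closedness to make $c_1\vee c_2$ the witness for $a\odot b$. Where you genuinely differ is in how the crucial membership $(c_1\vee c_2)\vee(a\odot b)\in F$ is certified. The paper stays inside the coannihilator calculus: it reads the hypotheses as $c_1\in(F:a)$ and $c_2\in(F:b)$, observes $c_1\vee c_2\in(F:a)\cap(F:b)$, and invokes the identity $(F:x)\cap(F:y)=(F:x\odot y)$ of Proposition \ref{4fxpro}\ref{4fxpro2}, an identity whose own proof rests on \ref{res2}, which with $x=c_1\vee c_2$, $y=a$, $z=b$ yields the required inequality in a single step. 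You instead expand $(c_1\vee a)\odot(c_2\vee b)$ by \ref{res1} and absorb the three cross terms using $u\odot v\leq u\wedge v$ (a standard fact, itself a consequence of monotonicity of $\odot$ and $v\leq 1$, even though it is not listed in \textsc{Remark} \ref{resproposition}). In effect you re-derive, at the element level, exactly the instance of Proposition \ref{4fxpro}\ref{4fxpro2} that the paper cites. The trade-off: the paper's argument is shorter because it leans on the imported coannihilator machinery of \cite{rascoan}, while yours is more elementary and self-contained, depending only on the residuated-lattice axioms and \ref{res1}; this makes your version preferable if one wants the proposition to stand independently of the earlier coannihilator results, at the cost of the extra bookkeeping in the product expansion.
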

\begin{proof}
By Proposition \ref{1propdiv}\ref{1propdiv3} we have $1\in \omega_{F}(\mathscr{C})$. If $a\leq b$ and $a\in \omega_{F}(\mathscr{C})$, then $a\in (F:c)$ for some $c\in \mathscr{C}$ and so $c\in (F:a)$. By \textsc{Remark} \ref{4fxpro}\ref{4fxpro2} follows that $c\in (F:b)$ and it shows that $b\in (F:c)\subseteq \omega_{F}(\mathscr{C})$. If $a,b\in \omega_{F}(\mathscr{C})$, then for some $c_a,c_b\in \mathscr{C}$ we have $a\in (F:c_a)$ and $b\in (F:c_b)$. Hence we have $c_a\in (F:a)$ and $c_b\in (F:b)$. By \textsc{Remark} \ref{4fxpro}\ref{4fxpro3} we have $c_a\vee c_b\in (F:a\odot b)$ and it follows that $a\odot b\in (F:c_a\vee c_b)\subseteq \omega_{F}(\mathscr{C})$.
\end{proof}

The next proposition should be compared with Proposition \ref{1propdiv}\ref{1propdiv6}.
\begin{proposition}\label{profilt}
Let $\mathfrak{A}$ be a residuated lattice and $\mathscr{C}$ be a $\vee$-closed subset of $\mathfrak{A}$. The following assertions are equivalent:
\begin{enumerate}
  \item [$(1)$ \namedlabel{profilt1}{$(1)$}] $F\cap \mathscr{C}=\emptyset$;
  \item [$(2)$ \namedlabel{profilt2}{$(2)$}] $\omega_{F}(\mathscr{C})$ is proper;
  \item [$(3)$ \namedlabel{profilt3}{$(3)$}] $\omega_{F}(\mathscr{C})\cap \mathscr{C}=\emptyset$.
\end{enumerate}
\end{proposition}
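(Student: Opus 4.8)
The plan is to prove the three-way equivalence by exploiting the properties of $\omega_{F}$ already catalogued in Proposition \ref{1propdiv} together with the fact (Proposition \ref{2propdiv}) that $\omega_{F}(\mathscr{C})$ is a genuine filter, so that the word ``proper'' in \ref{profilt2} simply means $\omega_{F}(\mathscr{C})\neq A$. I would first dispatch $(1)\Leftrightarrow(2)$ with essentially no work: Proposition \ref{1propdiv}\ref{1propdiv6} states that $\omega_{F}(\mathscr{C})=A$ if and only if $F\cap\mathscr{C}\neq\emptyset$, and taking contrapositives gives that $\omega_{F}(\mathscr{C})$ is proper precisely when $F\cap\mathscr{C}=\emptyset$. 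This is the whole of \ref{profilt1}$\,\Leftrightarrow\,$\ref{profilt2}.

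It then remains to tie in \ref{profilt3}, which I would handle as $(1)\Leftrightarrow(3)$. For $(3)\Rightarrow(1)$ I would use the inclusion $F\subseteq\omega_{F}(\mathscr{C})$ from Proposition \ref{1propdiv}\ref{1propdiv3}: intersecting with $\mathscr{C}$ yields $F\cap\mathscr{C}\subseteq\omega_{F}(\mathscr{C})\cap\mathscr{C}=\emptyset$, so $F\cap\mathscr{C}=\emptyset$. For the reverse implication $(1)\Rightarrow(3)$, assume $F\cap\mathscr{C}=\emptyset$ and suppose toward a contradiction that some $c$ lies in $\omega_{F}(\mathscr{C})\cap\mathscr{C}$. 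Unwinding the definition (equivalently Proposition \ref{1propdiv}\ref{1propdiv1}), membership of $c$ in $\omega_{F}(\mathscr{C})$ produces some $c'\in\mathscr{C}$ with $c\vee c'\in F$; but since $\mathscr{C}$ is $\vee$-closed we also have $c\vee c'\in\mathscr{C}$, whence $c\vee c'\in F\cap\mathscr{C}$, contradicting \ref{profilt1}. Hence $\omega_{F}(\mathscr{C})\cap\mathscr{C}=\emptyset$.

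I expect no serious obstacle here; the argument is short and the only step that actually consumes a hypothesis beyond the bookkeeping lemmas is $(1)\Rightarrow(3)$, which is precisely the place where the $\vee$-closedness of $\mathscr{C}$ is indispensable (without it, $c\vee c'$ need not return to $\mathscr{C}$ and the contradiction evaporates). Everything else is a direct transcription of Proposition \ref{1propdiv}, so the cleanest write-up is to present $(1)\Leftrightarrow(2)$ via \ref{1propdiv6} and $(1)\Leftrightarrow(3)$ via \ref{1propdiv3} and the $\vee$-closure trick, rather than forcing a single cyclic chain.
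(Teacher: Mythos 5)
Your proposal is correct and takes essentially the same approach as the paper: both dispatch \ref{profilt1}$\Leftrightarrow$\ref{profilt2} directly from Proposition \ref{1propdiv}\ref{1propdiv6}, and both tie in \ref{profilt3} via the same $\vee$-closure observation (a common element of $\omega_{F}(\mathscr{C})$ and $\mathscr{C}$ joins with some element of $\mathscr{C}$ to yield an element of $F\cap\mathscr{C}$), which the paper uses in its \ref{profilt2}$\Rightarrow$\ref{profilt3} contradiction while leaving the detail implicit. The only differences are cosmetic: you pair \ref{profilt3} with \ref{profilt1} instead of \ref{profilt2}, and you justify the easy direction by $F\subseteq\omega_{F}(\mathscr{C})$ where the paper declares it evident.
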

\begin{proof}
By Proposition \ref{1propdiv}\ref{1propdiv6} follows that \ref{profilt1} and \ref{profilt2} are equivalent.
\item [] \ref{profilt2}$\Rightarrow$ \ref{profilt3}: If $\omega_{F}(\mathscr{C})\cap \mathscr{C}\neq\emptyset$, then $F\cap \mathscr{C}\neq\emptyset$ and so by Proposition \ref{1propdiv}\ref{1propdiv6} follows that $\omega_{F}(\mathscr{C})=A$; a contradiction.
\item [] \ref{profilt3}$\Rightarrow$ \ref{profilt2}: It is evident.
\end{proof}

We recall that a subset $I$ of a lattice $\mathfrak{A}$ is called an \textit{ideal} if $I$ is a $\vee$-closed subset of $\mathfrak{A}$ and $x\leq y$ implies $x\in I$ for any $x\in A$ and $y\in I$.  The set of all ideals of a lattice $\mathfrak{A}$ is denoted by $\mathscr{I}(\mathfrak{A})$. For any subset $X$ of $A$ the \textit{ideal of $\mathfrak{A}$ generated by $X$} is denoted by $\mathscr{I}(X)$ and $\mathscr{I}(\{x\})$ is denoted by $\mathscr{I}(x)$. The following remark has a routine verification.
\begin{remark}\label{idealremark}
Let $\mathfrak{A}$ be a residuated lattice. The following assertions hold for any $x,y\in A$:
\begin{enumerate}
  \item [$(1)$ \namedlabel{idealremark1}{$(1)$}] $(\mathscr{I}(\ell(\mathfrak{A}));\cap,\curlyvee)$ is a frame where $\curlyvee(\mathcal{I})=\mathscr{I}(\cup \mathcal{I})$ for any $\mathcal{I}\subseteq \mathscr{I}(\ell(\mathfrak{A}))$;
  \item [$(2)$ \namedlabel{idealremark2}{$(2)$}] $\mathscr{I}(x)=\{a\in A|a\leq x\}$;
  \item [$(3)$ \namedlabel{idealremark3}{$(3)$}] $\mathscr{I}(x)\cap \mathscr{I}(y)=\mathscr{I}(x\wedge y)$;
  \item [$(4)$ \namedlabel{idealremark4}{$(4)$}] $\mathscr{I}(x)\curlyvee \mathscr{I}(y)=\mathscr{I}(x\vee y)$.
\end{enumerate}
\end{remark}
\begin{definition}\label{omefildef}
A filter $H$ of $\mathfrak{A}$ is called an $\omega$-filter belonging to $F$ (or $\omega_F$-filter) if $H=\omega_{F}(I_{H})$ for some ideal $I_{H}$. The set of all $\omega_F$-filters is denoted by $\Omega_{F}(\mathfrak{A})$. It is obvious that $F,A\in \Omega_{F}(\mathfrak{A})$. In the sequel $\Omega_{\textbf{1}}(\mathfrak{A})$ simply is denoted by $\Omega(\mathfrak{A})$ and its elements are called $\omega$-filters.
\end{definition}

\begin{proposition}\label{dislattoff}
Let $\mathfrak{A}$ be a residuated lattice and $F$ be a filter of $\mathfrak{A}$. Then $(\Omega_{F}(\mathfrak{A});\cap,\vee^{\omega_{F}},F,A)$ is a bounded distributive lattice where $G\vee^{\omega_{F}} H=\omega_{F}(I_{G}\curlyvee I_{H})$ for any $G,H\in \Omega_{F}(\mathfrak{A})$.
\end{proposition}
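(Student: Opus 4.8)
The plan is to realize $\Omega_{F}(\mathfrak{A})$ as the image of the ideal lattice under the map $\phi\colon \mathscr{I}(\ell(\mathfrak{A}))\to\mathscr{F}(\mathfrak{A})$, $\phi(I)=\omega_{F}(I)$, and to transport the lattice structure along $\phi$. Throughout I will use that an ideal $I$ is a $\vee$-closed set, so that $\omega_{F}(I)$ is a filter by Proposition \ref{2propdiv}, and that $\omega_{F}(I)=\bigcup_{x\in I}(F:x)$ by Proposition \ref{1propdiv}\ref{1propdiv1}. Since every $\omega_{F}$-filter is by definition of the form $\omega_{F}(I)$ for some ideal $I$, the map $\phi$ is onto $\Omega_{F}(\mathfrak{A})$.

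First I would settle the meet, by showing $\omega_{F}(I)\cap\omega_{F}(J)=\omega_{F}(I\cap J)$ for ideals $I,J$. The inclusion $\supseteq$ is monotonicity (Proposition \ref{1propdiv}\ref{1propdiv4}); for $\subseteq$, if $a\in(F:x)\cap(F:y)$ with $x\in I$ and $y\in J$, then $a\in(F:x\odot y)$ by Proposition \ref{4fxpro}\ref{4fxpro2}, and since $x\odot y\le x\wedge y$ the element $x\odot y$ lies in the downward-closed set $I\cap J$, whence $a\in\omega_{F}(I\cap J)$. In particular $\Omega_{F}(\mathfrak{A})$ is closed under $\cap$, so $\cap$ is the meet in the poset $(\Omega_{F}(\mathfrak{A}),\subseteq)$ and $\phi$ preserves meets.

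Next I would treat the join, which is the crux. As $I,J$ are $\vee$-closed, $I\curlyvee J=\{a: a\le g\vee h,\ g\in I,\ h\in J\}$, so monotonicity of coannulets gives $\omega_{F}(I\curlyvee J)=\bigcup_{g\in I,\,h\in J}(F:g\vee h)$. I must show this $\omega_{F}$-filter is the \emph{least} upper bound of $G=\omega_{F}(I)$ and $H=\omega_{F}(J)$; being an upper bound is immediate from Proposition \ref{1propdiv}\ref{1propdiv4}, and leastness will simultaneously prove that $\vee^{\omega_{F}}$ is well defined (independent of the chosen ideals). So let $K=\omega_{F}(I_{K})$ be any $\omega_{F}$-filter with $G,H\subseteq K$ and take $a\in\omega_{F}(I\curlyvee J)$, say $g\vee h\vee a\in F$ with $g\in I$, $h\in J$. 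The hard part is that one cannot recover $a$ directly from $g\vee h\vee a\in F$, only elements $\ge a$; the device is to transfer through $K$ twice, crucially using that $K$ is itself an $\omega_{F}$-filter rather than an arbitrary filter. Indeed $a\vee h\in(F:g)\subseteq\omega_{F}(I)=G\subseteq K=\omega_{F}(I_{K})$, so $k_{1}\vee(a\vee h)\in F$ for some $k_{1}\in I_{K}$; rewriting, $a\vee k_{1}\in(F:h)\subseteq\omega_{F}(J)=H\subseteq K=\omega_{F}(I_{K})$, so $k_{2}\vee(a\vee k_{1})\in F$ for some $k_{2}\in I_{K}$. Then $k_{1}\vee k_{2}\in I_{K}$ and $a\in(F:k_{1}\vee k_{2})\subseteq\omega_{F}(I_{K})=K$. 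Hence $\omega_{F}(I\curlyvee J)\subseteq K$, so $G\vee^{\omega_{F}}H:=\omega_{F}(I\curlyvee J)$ is the join and is well defined.

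Finally I would assemble the structure. The bounds are $F=\omega_{F}(\mathscr{I}(0))=(F:0)$, which is least by Proposition \ref{1propdiv}\ref{1propdiv3}, and $A=\omega_{F}(A)$, which is greatest by Proposition \ref{1propdiv}\ref{1propdiv6}; thus $(\Omega_{F}(\mathfrak{A});\cap,\vee^{\omega_{F}},F,A)$ is a bounded lattice. For distributivity, the two formulas above say exactly that $\phi$ is a surjective lattice homomorphism from $(\mathscr{I}(\ell(\mathfrak{A}));\cap,\curlyvee)$ onto $\Omega_{F}(\mathfrak{A})$; since the ideal lattice is a frame, hence distributive (Remark \ref{idealremark}\ref{idealremark1}), the distributive law transfers: for ideals $I,J,L$, $\omega_{F}(I)\cap(\omega_{F}(J)\vee^{\omega_{F}}\omega_{F}(L))=\phi\big(I\cap(J\curlyvee L)\big)=\phi\big((I\cap J)\curlyvee(I\cap L)\big)=(\omega_{F}(I)\cap\omega_{F}(J))\vee^{\omega_{F}}(\omega_{F}(I)\cap\omega_{F}(L))$, the middle equality being distributivity in the ideal frame. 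I expect the only genuine obstacle to be the leastness of the join; everything else is bookkeeping.
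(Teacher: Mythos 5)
Your proof is correct and takes essentially the same route as the paper's: establish $G\cap H=\omega_{F}(I_{G}\cap I_{H})$, show that $\omega_{F}(I_{G}\curlyvee I_{H})$ is the supremum of $G$ and $H$ in $\Omega_{F}(\mathfrak{A})$, and then transport distributivity from the ideal frame of Remark \ref{idealremark}\ref{idealremark1}. The only difference is one of detail: the paper labels the first two steps ``routine,'' while you supply the actual arguments --- notably the double-transfer through $K=\omega_{F}(I_{K})$ that proves leastness of the join and, with it, well-definedness of $\vee^{\omega_{F}}$.
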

\begin{proof}
Let $G,H\in \Omega_{F}(\mathfrak{A})$. In a routine way we can show that $G\cap H=\omega_{F}(I_{G}\cap I_{H})$ and $G\vee^{\omega_{F}} H$ is the supremum of $G$ and $H$. Following by \textsc{Remark} \ref{idealremark}\ref{idealremark1} obviously $\Omega_{F}(\mathfrak{A})$ is a distributive lattice.
%
\end{proof}
\begin{lemma}\label{gammaomega}
Let $\mathfrak{A}$ be a residuated lattice and $F$ be a filter of $\mathfrak{A}$. Then $\gamma_{F}(\mathfrak{A})$ is a subset of $\Omega_{F}(\mathfrak{A})$.
\end{lemma}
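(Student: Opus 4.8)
The plan is to show that every $F$-coannulet $(F:x)$ is itself an $\omega_F$-filter by exhibiting an explicit ideal witnessing this. The natural candidate is the principal ideal $\mathscr{I}(x)=\{a\in A\mid a\leq x\}$ (Remark \ref{idealremark}\ref{idealremark2}), and I expect to prove $(F:x)=\omega_{F}(\mathscr{I}(x))$.

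First I would fix an arbitrary $x\in A$ and rewrite $\omega_{F}(\mathscr{I}(x))$ using Proposition \ref{1propdiv}\ref{1propdiv1}, which gives
\[
\omega_{F}(\mathscr{I}(x))=\bigcup_{a\in\mathscr{I}(x)}(F:a)=\bigcup_{a\leq x}(F:a).
\]
The two inclusions then fall out of monotonicity. For $(F:x)\subseteq\omega_{F}(\mathscr{I}(x))$, note that $x\leq x$ means $x\in\mathscr{I}(x)$, so $(F:x)$ is one of the sets in the union. For the reverse inclusion $\omega_{F}(\mathscr{I}(x))\subseteq(F:x)$, observe that whenever $a\leq x$ we have $(F:a)\subseteq(F:x)$ by Proposition \ref{4fxpro}\ref{4fxpro1}; hence every set in the union is contained in $(F:x)$. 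Combining these yields $(F:x)=\omega_{F}(\mathscr{I}(x))$.

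It remains to confirm that $(F:x)$ genuinely qualifies as an $\omega_F$-filter in the sense of Definition \ref{omefildef}, i.e.\ that it is a filter obtained as $\omega_F$ of an ideal. Since $\mathscr{I}(x)$ is an ideal, it is in particular $\vee$-closed, so Proposition \ref{2propdiv} guarantees that $\omega_{F}(\mathscr{I}(x))$ is a filter; this takes care of the filter requirement automatically and shows $(F:x)\in\Omega_{F}(\mathfrak{A})$. As $x$ was arbitrary, $\gamma_{F}(\mathfrak{A})\subseteq\Omega_{F}(\mathfrak{A})$.

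I do not anticipate any serious obstacle here: the entire argument hinges on the single correct choice of witnessing ideal, namely the principal ideal $\mathscr{I}(x)$, after which the monotonicity of coannulets (Proposition \ref{4fxpro}\ref{4fxpro1}) and the union formula for $\omega_F$ (Proposition \ref{1propdiv}\ref{1propdiv1}) do all the work. The only point requiring a moment's care is not forgetting to invoke Proposition \ref{2propdiv} so that the filter condition in the definition of $\Omega_{F}(\mathfrak{A})$ is explicitly met rather than merely assumed.
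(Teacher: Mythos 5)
Your proof is correct and is exactly the argument the paper intends: the paper's one-line proof cites precisely Proposition \ref{4fxpro}\ref{4fxpro1} and Proposition \ref{1propdiv}\ref{1propdiv1}, i.e.\ the identity $(F:x)=\omega_{F}(\mathscr{I}(x))$ via the union formula and monotonicity, which you have merely written out in full detail (and your extra appeal to Proposition \ref{2propdiv} for the filter condition is a harmless, sound addition).
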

\begin{proof}
It follows by Proposition \ref{4fxpro}\ref{4fxpro1} and Proposition \ref{1propdiv}\ref{1propdiv1}.
\end{proof}
\begin{proposition}\label{30fxpro}
Let $\mathfrak{A}$ be a residuated lattice and $F$ be a filter of $\mathfrak{A}$. Then $\gamma_{F}(\mathfrak{A})$ is a bounded sublattice of $\Omega_{F}(\mathfrak{A})$.
\end{proposition}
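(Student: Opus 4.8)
The plan is to build on Lemma~\ref{gammaomega}, which already gives the inclusion $\gamma_{F}(\mathfrak{A})\subseteq\Omega_{F}(\mathfrak{A})$. Since by Proposition~\ref{dislattoff} the ambient lattice $\Omega_{F}(\mathfrak{A})$ carries meet $\cap$ and join $\vee^{\omega_{F}}$ with bounds $F$ and $A$, it suffices to check three things: that $\gamma_{F}(\mathfrak{A})$ is closed under $\cap$, that it is closed under $\vee^{\omega_{F}}$, and that the two bounds $F$ and $A$ lie in $\gamma_{F}(\mathfrak{A})$.

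I would first dispatch closure under meet, which is immediate: for two coannulets $(F:x)$ and $(F:y)$, Proposition~\ref{4fxpro}\ref{4fxpro2} gives $(F:x)\cap(F:y)=(F:x\odot y)$, and since $x\odot y\in A$ this is again a coannulet.

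The join is the only step requiring a short preparation, and it is where I expect the actual work to sit. First I would record that every coannulet is the $\omega_{F}$-filter of a principal ideal, namely $(F:x)=\omega_{F}(\mathscr{I}(x))$: by Proposition~\ref{1propdiv}\ref{1propdiv1} and Remark~\ref{idealremark}\ref{idealremark2} we have $\omega_{F}(\mathscr{I}(x))=\bigcup_{a\le x}(F:a)$, and Proposition~\ref{4fxpro}\ref{4fxpro1} forces each $(F:a)\subseteq(F:x)$ while the term $a=x$ recovers $(F:x)$, so the union collapses to $(F:x)$. With this representation I may legitimately take $I_{(F:x)}=\mathscr{I}(x)$ and $I_{(F:y)}=\mathscr{I}(y)$ in the join formula of Proposition~\ref{dislattoff} and compute
\[
(F:x)\vee^{\omega_{F}}(F:y)=\omega_{F}\bigl(\mathscr{I}(x)\curlyvee\mathscr{I}(y)\bigr)=\omega_{F}\bigl(\mathscr{I}(x\vee y)\bigr)=(F:x\vee y),
\]
where the middle equality is Remark~\ref{idealremark}\ref{idealremark4} and the last is the principal-ideal representation applied to $x\vee y$. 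As $x\vee y\in A$, the join is a coannulet, so $\gamma_{F}(\mathfrak{A})$ is closed under $\vee^{\omega_{F}}$.

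Finally I would settle the bounds by two direct evaluations: $(F:0)=\{a\in A\mid 0\vee a\in F\}=F$, and $(F:1)=A$ because $1\vee a=1\in F$ for every $a$; hence $F,A\in\gamma_{F}(\mathfrak{A})$. Assembling these three facts yields that $\gamma_{F}(\mathfrak{A})$ is a bounded sublattice of $\Omega_{F}(\mathfrak{A})$. The main obstacle is the join step: one must identify a coannulet with the $\omega_{F}$-image of a principal ideal so that the ideal-lattice join $\curlyvee$ can be transported to the ordinary join $x\vee y$, after which the computation is routine.
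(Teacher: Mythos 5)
Your proof is correct and follows essentially the same route as the paper: the key step in both is the identity $(F:x)=\omega_{F}(\mathscr{I}(x))$ (the content of Lemma~\ref{gammaomega}), which transports the join of Proposition~\ref{dislattoff} through $\mathscr{I}(x)\curlyvee\mathscr{I}(y)=\mathscr{I}(x\vee y)$ to yield $(F:x)\vee^{\omega_{F}}(F:y)=(F:x\vee y)$. You are in fact slightly more complete than the paper, which leaves the meet closure via $(F:x)\cap(F:y)=(F:x\odot y)$ and the bounds $(F:0)=F$, $(F:1)=A$ implicit.
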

\begin{proof}
By Lemma \ref{gammaomega} follows that $\gamma_{F}(\mathfrak{A})$ is a subset of $\Omega_{F}(\mathfrak{A})$. Also, for any $x,y\in A$ we have the following sequence of formulas:
\[
\begin{array}{ll}
   (F:x)\vee^{\omega_{F}} (F:y)&=\omega_{F}(\mathscr{I}(x))\vee^{\omega_F} \omega_{F}(\mathscr{I}(y))  \\
   &=\omega_{F}(\mathscr{I}(x)\curlyvee \mathscr{I}(y))]  \\
   &=\omega_{F}(\mathscr{I}(x\vee y)) \\
   &=(F:x\vee y).
\end{array}
\]
\end{proof}
\begin{corollary}\label{lastcor}
Let $\mathfrak{A}$ be a residuated lattice and $F$ be a filter of $\mathfrak{A}$. If $x\vee y\in F$, then $(F:x)\vee^{\omega_{F}} (F:y)=A$.
\end{corollary}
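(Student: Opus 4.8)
The plan is to read off this corollary directly from the sublattice identity just established in Proposition~\ref{30fxpro}, combined with the characterization of when a coannihilator equals the whole algebra from Proposition~\ref{1fxpro}\ref{1fxpro2}. Specifically, Proposition~\ref{30fxpro} gives the key computation
\[
(F:x)\vee^{\omega_{F}}(F:y)=(F:x\vee y),
\]
valid for all $x,y\in A$, so the entire content of the corollary reduces to showing that the right-hand side is all of $A$ under the hypothesis $x\vee y\in F$.

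To finish, I would invoke Proposition~\ref{1fxpro}\ref{1fxpro2}, which states that $(F:X)=A$ if and only if $X\subseteq F$. Taking $X=\{x\vee y\}$, the hypothesis $x\vee y\in F$ is exactly the condition $X\subseteq F$, whence $(F:x\vee y)=A$. Chaining this with the displayed identity yields $(F:x)\vee^{\omega_{F}}(F:y)=A$, which is the claim. The argument is therefore a two-line concatenation of two already-proven facts, and no new construction is needed.

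I do not anticipate any genuine obstacle here, since both ingredients are in hand. The only point requiring a moment's care is that the join is being taken in the lattice $\Omega_{F}(\mathfrak{A})$ (the operation $\vee^{\omega_F}$) rather than the filter-lattice join $\veebar$; but Proposition~\ref{30fxpro} is stated precisely for $\vee^{\omega_F}$, so the identity transfers without adjustment, and one simply notes that the top element of $\Omega_{F}(\mathfrak{A})$ is $A$ by Definition~\ref{omefildef}. Thus the proof is immediate from the preceding results.
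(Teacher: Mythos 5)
Your proof is correct and follows exactly the paper's own route: the paper likewise derives the corollary as an immediate consequence of Proposition~\ref{30fxpro} (giving $(F:x)\vee^{\omega_F}(F:y)=(F:x\vee y)$) and Proposition~\ref{1fxpro}\ref{1fxpro2} (giving $(F:x\vee y)=A$ when $x\vee y\in F$). You have merely spelled out the chaining that the paper leaves implicit, so there is nothing to add or correct.
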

\begin{proof}
It is an immediate consequence of Proposition \ref{1fxpro}\ref{1fxpro2} and Proposition \ref{30fxpro}.
\end{proof}

Now, we introduce the notion of divisor filters in a residuated lattice as a special kind of $\omega$-filters, which are important tools in studying of minimal prime filters.

\begin{definition}
Let $H$ be a proper filter of a residuated lattice $\mathfrak{A}$. We set $D_{F}(H)=\omega_{F}(H^{c})$ and call its elements \textit{$F$-divisors of $H$}. $D_{\{1\}}(H)$ is denoted by $D(H)$ and its elements are called \textit{unit divisors of $H$}. Unit divisors of $\{1\}$ simply are called \textit{unit divisors}.
\end{definition}
\begin{proposition}\label{lemdivi}
 Let $\mathfrak{A}$ be a residuated lattice, $F$ be a filter and $H$ be a proper filter of $\mathfrak{A}$. The following assertions hold:
\begin{enumerate}
  \item  [$(1)$ \namedlabel{lemdivi1}{$(1)$}] $F\subseteq D_{F}(H)=\{a\in A|(F:a)\nsubseteq H\}=\cup_{x\notin H}(F:x)$;
  \item  [$(2)$ \namedlabel{lemdivi2}{$(2)$}] $D_{F}(H)=A$ if and only if $F\nsubseteq H$.
\end{enumerate}
 \end{proposition}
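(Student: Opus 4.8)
The plan is to derive everything directly from the already-established properties of $\omega_F$ in Proposition \ref{1propdiv}, since by definition $D_F(H)=\omega_F(H^c)$. First I would note that because $H$ is proper we have $H\neq A$, so $H^c$ is nonempty; this is the only place properness is needed in part \ref{lemdivi1}, and it is exactly what guarantees the inclusion $F\subseteq\omega_F(H^c)$ of Proposition \ref{1propdiv}\ref{1propdiv3} is genuinely available (for $X=\emptyset$ one would have $\omega_F(X)=\emptyset$, so the inclusion would fail). This gives the leftmost inclusion $F\subseteq D_F(H)$.

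For the two equalities in part \ref{lemdivi1}, I would simply translate the two alternative descriptions of $\omega_F$ from Proposition \ref{1propdiv} with $X=H^c$. The identity $\omega_F(X)=\cup_{x\in X}(F:x)$ of Proposition \ref{1propdiv}\ref{1propdiv1}, read with $X=H^c$, is verbatim $D_F(H)=\cup_{x\notin H}(F:x)$, since $x\in H^c$ means precisely $x\notin H$. For the middle expression, Proposition \ref{1propdiv}\ref{1propdiv2} gives $\omega_F(H^c)=\{a\in A\mid (F:a)\cap H^c\neq\emptyset\}$, and the key observation is the purely set-theoretic fact that $(F:a)\cap H^c\neq\emptyset$ holds if and only if $(F:a)\nsubseteq H$: an element witnessing the nonempty intersection is exactly an element of $(F:a)$ lying outside $H$. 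Substituting this yields $D_F(H)=\{a\in A\mid (F:a)\nsubseteq H\}$, which completes part \ref{lemdivi1}.

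Part \ref{lemdivi2} follows in the same spirit from Proposition \ref{1propdiv}\ref{1propdiv6}, which states that $\omega_F(X)=A$ if and only if $F\cap X\neq\emptyset$. Taking $X=H^c$ turns this into $D_F(H)=A$ if and only if $F\cap H^c\neq\emptyset$, and the same complement translation as above rewrites $F\cap H^c\neq\emptyset$ as $F\nsubseteq H$.

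I do not anticipate any real obstacle: the statement is essentially a dictionary entry recording what Proposition \ref{1propdiv} already says in the special case $X=H^c$. The only points demanding a moment's care are invoking properness of $H$ so that $H^c\neq\emptyset$, and making the two translations between ``meets $H^c$'' and ``is not contained in $H$'' explicit rather than leaving them implicit.
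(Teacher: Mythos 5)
Your proposal is correct and matches the paper's approach exactly: the paper's proof is the one-line remark ``It is straightforward by Proposition \ref{1propdiv}'', and your argument is precisely the detailed unpacking of that, specializing parts \ref{1propdiv1}, \ref{1propdiv2}, \ref{1propdiv3} and \ref{1propdiv6} to $X=H^{c}$. Your extra observation that properness of $H$ is needed so that $H^{c}\neq\emptyset$ (making Proposition \ref{1propdiv}\ref{1propdiv3} applicable) is a worthwhile point the paper leaves implicit.
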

\begin{proof}
It is straightforward by Proposition \ref{1propdiv}.
\end{proof}

\begin{proposition}\label{divisorfilt}
Let $\mathfrak{A}$ be a residuated lattice. For any prime filter $P$ of $\mathfrak{A}$ we have the following assertions:
\begin{enumerate}
  \item  [$(1)$ \namedlabel{divisorfilt1}{$(1)$}] $D_{F}(P)$ is an $\omega$-filter of $\mathfrak{A}$;
  \item  [$(2)$ \namedlabel{divisorfilt2}{$(2)$}] if $P$ contains $F$, then $D_{F}(P)\subseteq P$.

\end{enumerate}
\end{proposition}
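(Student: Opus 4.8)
The plan is to treat the two parts separately, with the decisive structural observation for \ref{divisorfilt1} being that the complement of a prime filter is not merely a $\vee$-closed set but an honest ideal of $\ell(\mathfrak{A})$, so that $D_F(P)=\omega_F(P^c)$ fits Definition \ref{omefildef} on the nose.

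For \ref{divisorfilt1}, I would start from $D_F(P)=\omega_F(P^c)$. By \textsc{Remark} \ref{primclos} the set $P^c$ is $\vee$-closed because $P$ is prime. The point I would then add is that $P^c$ is also a down-set: if $b\in P^c$ and $a\leq b$, then $a\in P$ would force $b\in P$ (filters are upward closed), contradicting $b\notin P$, so $a\in P^c$. Being simultaneously $\vee$-closed and downward closed, $P^c$ is an ideal of $\ell(\mathfrak{A})$. Taking $I_{D_F(P)}=P^c$ in Definition \ref{omefildef} exhibits $D_F(P)=\omega_F(P^c)$ as an $\omega_F$-filter directly; that it is indeed a filter is guaranteed by Proposition \ref{2propdiv}.

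For \ref{divisorfilt2}, assume $F\subseteq P$ and take an arbitrary $a\in D_F(P)$. Using the description $D_F(P)=\cup_{x\notin P}(F:x)$ from Proposition \ref{lemdivi}\ref{lemdivi1}, there is some $x\notin P$ with $a\in(F:x)$, that is $x\vee a\in F$. Since $F\subseteq P$ this yields $x\vee a\in P$, and primality of $P$ forces $x\in P$ or $a\in P$; as $x\notin P$ we conclude $a\in P$. Hence $D_F(P)\subseteq P$.

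The only genuinely non-routine step is establishing that $P^c$ is an ideal in part \ref{divisorfilt1}: once that is in hand, both assertions reduce to direct appeals to the definition of $\omega_F$-filter and to the union formula of Proposition \ref{lemdivi}, and part \ref{divisorfilt2} is essentially a one-line consequence of primality. I expect no real obstacle beyond making the down-set argument explicit.
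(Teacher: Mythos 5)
Your proof is correct and takes essentially the same route as the paper's: for \ref{divisorfilt1} the paper likewise reduces to \textsc{Remark} \ref{primclos} and Proposition \ref{2propdiv} (your explicit verification that $P^{c}$ is a down-set, hence an ideal of $\ell(\mathfrak{A})$, is a detail the paper leaves implicit), and for \ref{divisorfilt2} the paper's observation that $(F:x)\subseteq P$ for every $x\notin P$ is exactly your element-wise primality argument in containment form. No gaps.
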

\begin{proof}
\begin{enumerate}
\item [\ref{divisorfilt1}:] It follows by \textsc{Remark} \ref{primclos} and Proposition \ref{2propdiv}.
\item [\ref{divisorfilt2}:] Let $P$ contains $F$. Since $(F:a)\subseteq P$ for any $a\notin P$ so it follows by Proposition \ref{lemdivi}\ref{lemdivi1}.
\end{enumerate}
\end{proof}

Let $\mathfrak{A}$ be a residuated lattice and $F$ be a filter of $\mathfrak{A}$. A filter $G$ of $\mathfrak{A}$ is called an $F$-divisor filter if $G=D_{F}(P)$ for some prime filter $P$.
\begin{proposition}\label{minlem1}
Let $\mathfrak{A}$ be a residuated lattice, $F$ be a filter and $\mathfrak{m}$ be an $F$-minimal prime filter. The following assertions hold:
\begin{enumerate}
  \item  [$(1)$ \namedlabel{minlem11}{$(1)$}] $\mathfrak{m}=D_{F}(\mathfrak{m})$;
  \item  [$(2)$ \namedlabel{minlem12}{$(2)$}] $\mathfrak{m}\subseteq D_{F}(F)$.
\end{enumerate}
\end{proposition}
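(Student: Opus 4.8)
The plan is to prove the two inclusions of \ref{minlem11} separately, and then deduce \ref{minlem12} from \ref{minlem11} together with the antitonicity of $D_F(-)$. Throughout, recall that an $F$-minimal prime filter is in particular a prime filter containing $F$, so $F \subseteq \mathfrak{m}$ and $\mathfrak{m}$ is proper; consequently $F$ itself is proper and $D_F(F)$ is defined. The inclusion $D_F(\mathfrak{m}) \subseteq \mathfrak{m}$ in \ref{minlem11} is immediate from Proposition \ref{divisorfilt}\ref{divisorfilt2}, since $\mathfrak{m}$ contains $F$. For the reverse inclusion I would rewrite $D_F(\mathfrak{m})$ using Proposition \ref{lemdivi}\ref{lemdivi1} as $D_F(\mathfrak{m}) = \{a \in A \mid (F:a) \nsubseteq \mathfrak{m}\}$, so that the goal $\mathfrak{m} \subseteq D_F(\mathfrak{m})$ becomes: for every $a \in \mathfrak{m}$ one has $(F:a) \nsubseteq \mathfrak{m}$.

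I would establish this in contrapositive form: assuming $(F:a) \subseteq \mathfrak{m}$, I would show $a \notin \mathfrak{m}$, i.e.\ $a \in \mathfrak{m}^{c}$. The decisive tool is Theorem \ref{1mineq}, by which $\mathfrak{m}^{c}$ is a $\vee$-closed set maximal with respect to not meeting $F$. I would adjoin $a$ and pass to the $\vee$-closure $\mathscr{D}$ of $\mathfrak{m}^{c} \cup \{a\}$; since $\mathfrak{m}^{c}$ is already $\vee$-closed, $\mathscr{D}$ consists of $\mathfrak{m}^{c}$ together with $a$ and all joins $a \vee s$ with $s \in \mathfrak{m}^{c}$. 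The claim is that $\mathscr{D}$ does not meet $F$. Indeed, $a \notin F$ (otherwise $(F:a) = A$ by Proposition \ref{1fxpro}\ref{1fxpro2}, contradicting $(F:a) \subseteq \mathfrak{m}$ with $\mathfrak{m}$ proper), and no join $a \vee s$ with $s \in \mathfrak{m}^{c}$ lies in $F$ (otherwise $s \in (F:a) \subseteq \mathfrak{m}$, contradicting $s \in \mathfrak{m}^{c}$); finally $\mathfrak{m}^{c} \cap F = \emptyset$ by hypothesis. By the maximality of $\mathfrak{m}^{c}$, the $\vee$-closed set $\mathscr{D}$ cannot properly extend it, so $\mathscr{D} = \mathfrak{m}^{c}$ and in particular $a \in \mathfrak{m}^{c}$, as required. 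This proves $\mathfrak{m} \subseteq D_F(\mathfrak{m})$ and hence \ref{minlem11}.

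For \ref{minlem12}, observe that $D_F(-)$ is antitone: if $H_1 \subseteq H_2$ then $H_2^{c} \subseteq H_1^{c}$, whence $D_F(H_2) = \omega_F(H_2^{c}) \subseteq \omega_F(H_1^{c}) = D_F(H_1)$ by Proposition \ref{1propdiv}\ref{1propdiv4}. Applying this to $F \subseteq \mathfrak{m}$ gives $D_F(\mathfrak{m}) \subseteq D_F(F)$, and combining with \ref{minlem11} yields $\mathfrak{m} = D_F(\mathfrak{m}) \subseteq D_F(F)$. I expect the main obstacle to be the reverse inclusion in \ref{minlem11}: the rest is formal, but that step genuinely uses the maximality characterization of $F$-minimal prime filters, and the care lies in correctly describing the $\vee$-closure $\mathscr{D}$ and checking each way it could meet $F$.
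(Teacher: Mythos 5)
Your proof is correct and takes essentially the same route as the paper: both inclusions are split the same way (Proposition \ref{divisorfilt}\ref{divisorfilt2} for $D_F(\mathfrak{m})\subseteq\mathfrak{m}$), and the hard inclusion rests on the same key tool, the maximality characterization of $\mathfrak{m}^{c}$ from Theorem \ref{1mineq}, applied to a join-extension of $\mathfrak{m}^{c}$ --- you just run that step contrapositively (your $\mathscr{D}$ misses $F$, hence collapses to $\mathfrak{m}^{c}$), whereas the paper runs it directly (its $\mathscr{C}=(x\vee\mathfrak{m}^{c})\cup\mathfrak{m}^{c}$ strictly enlarges $\mathfrak{m}^{c}$, hence meets $F$, producing the witness $y\in\mathfrak{m}^{c}$ with $x\vee y\in F$). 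Part \ref{minlem12} is handled identically in both, via monotonicity of $\omega_F$ (Proposition \ref{1propdiv}\ref{1propdiv4}) applied to $\mathfrak{m}^{c}\subseteq F^{c}$ together with part \ref{minlem11}.
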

\begin{proof} \item [\ref{minlem11}:]  Let $x\in \mathfrak{m}$. It is easy to check that $\mathscr{C}=(x\vee \mathfrak{m}^{c})\cup \mathfrak{m}^{c}$ is a $\vee$-closed subset of $\mathfrak{A}$. By Proposition \ref{1mineq} we obtain that $(x\vee \mathfrak{m}^{c})\cap F=\mathscr{C}\cap F\neq \emptyset$. Assume that $a\in (x\vee \mathfrak{m}^{c})\cap F$. So there exists $y\in \mathfrak{m}^{c}$ such that $x\vee y=a\in F$. The converse inclusion is evident by Proposition \ref{divisorfilt}\ref{divisorfilt2}.
\item [\ref{minlem12}:] It is an immediate consequence of Proposition \ref{1propdiv}\ref{1propdiv4} and \ref{minlem11}.
\end{proof}
\begin{remark}
Applying Proposition \ref{minlem1}\ref{minlem12}, it follows that any element of a minimal prime filter in a residuated lattice is a unit divisor.
\end{remark}

The following corollary is a characterization for minimal prime filters belonging to a filter.
\begin{theorem}\label{mincor}
Let $\mathfrak{A}$ be a residuated lattice, $F$ be a filter and $P$ be a prime filter containing $F$. The following assertions are equivalent:
\begin{enumerate}
  \item  [$(1)$ \namedlabel{mincor1}{$(1)$}] $P$ is an $F$-minimal prime filter;
  \item  [$(2)$ \namedlabel{mincor2}{$(2)$}] $P=D_{F}(P)$;
  \item  [$(3)$ \namedlabel{mincor3}{$(3)$}] for any $x\in A$, $P$ contains precisely one of $x$ or $(F:x)$.
\end{enumerate}
\end{theorem}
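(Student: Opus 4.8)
The plan is to establish the two equivalences $(1)\Leftrightarrow(2)$ and $(2)\Leftrightarrow(3)$ separately, since condition $(2)$ (the fixed-point identity $P=D_{F}(P)$) sits naturally between the structural condition $(1)$ and the elementwise condition $(3)$, and each half is driven by a different earlier result.

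For $(1)\Rightarrow(2)$ there is essentially nothing to do beyond citing Proposition \ref{minlem1}\ref{minlem11}, which already records that an $F$-minimal prime filter $\mathfrak{m}$ satisfies $\mathfrak{m}=D_{F}(\mathfrak{m})$. For the converse $(2)\Rightarrow(1)$ I would first invoke Corollary \ref{primeminimal} to produce an $F$-minimal prime filter $\mathfrak{m}$ with $\mathfrak{m}\subseteq P$. Passing to complements gives $P^{c}\subseteq \mathfrak{m}^{c}$, so the monotonicity of $\omega_{F}$ recorded in Proposition \ref{1propdiv}\ref{1propdiv4} yields $D_{F}(P)=\omega_{F}(P^{c})\subseteq \omega_{F}(\mathfrak{m}^{c})=D_{F}(\mathfrak{m})$, and the latter equals $\mathfrak{m}$ again by Proposition \ref{minlem1}\ref{minlem11}. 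Thus $D_{F}(P)\subseteq \mathfrak{m}\subseteq P$; under the hypothesis $P=D_{F}(P)$ this forces $P\subseteq \mathfrak{m}\subseteq P$, whence $P=\mathfrak{m}$ is $F$-minimal.

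The equivalence $(2)\Leftrightarrow(3)$ is a reformulation once we use the description $D_{F}(P)=\{a\in A\mid (F:a)\nsubseteq P\}$ from Proposition \ref{lemdivi}\ref{lemdivi1}. Indeed, $P=D_{F}(P)$ says precisely that for every $x\in A$ one has $x\in P$ if and only if $(F:x)\nsubseteq P$; reading ``$P$ contains $x$'' as $x\in P$ and ``$P$ contains $(F:x)$'' as $(F:x)\subseteq P$, this biconditional is exactly the assertion that precisely one of $x\in P$ and $(F:x)\subseteq P$ holds, which is $(3)$. It is worth noting that one half of $(3)$ is automatic from primeness: if $x\notin P$ and $b\in(F:x)$, then $x\vee b\in F\subseteq P$ forces $b\in P$, so $(F:x)\subseteq P$; equivalently $D_{F}(P)\subseteq P$, which is Proposition \ref{divisorfilt}\ref{divisorfilt2}. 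Hence the genuine content of both $(2)$ and $(3)$ is the reverse inclusion $P\subseteq D_{F}(P)$.

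I expect the main obstacle to be the direction $(2)\Rightarrow(1)$: the fixed-point and elementwise conditions carry no minimality on their face, so the crux is to manufacture an actual $F$-minimal prime filter beneath $P$ via Corollary \ref{primeminimal} and then to sandwich it, using the monotonicity of the divisor operator together with the already established identity $\mathfrak{m}=D_{F}(\mathfrak{m})$. The remaining steps are formal consequences of the coannulet description of $D_{F}(P)$ and of primeness.
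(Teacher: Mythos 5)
Your proposal is correct, but it closes the cycle of implications along a different edge than the paper does. The paper proves \ref{mincor1}$\Rightarrow$\ref{mincor2}$\Rightarrow$\ref{mincor3}$\Rightarrow$\ref{mincor1}; its substantive step is \ref{mincor3}$\Rightarrow$\ref{mincor1}, where an \emph{arbitrary} prime filter $Q$ with $F\subseteq Q\subseteq P$ is taken and $P\subseteq Q$ is shown via the chain: $x\in P$ implies $(F:x)\nsubseteq P$ (by \ref{mincor3}), hence $x\in D_{F}(P)\subseteq D_{F}(Q)\subseteq Q$, using only the antitonicity of $D_{F}$ and Proposition \ref{divisorfilt}\ref{divisorfilt2}; this verifies minimality directly from its definition, with no existence theorem required. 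You instead prove \ref{mincor2}$\Rightarrow$\ref{mincor1} by invoking Corollary \ref{primeminimal} (a Zorn-type existence result) to produce an $F$-minimal prime filter $\mathfrak{m}\subseteq P$, and then sandwich: $P=D_{F}(P)\subseteq D_{F}(\mathfrak{m})=\mathfrak{m}\subseteq P$, using Proposition \ref{1propdiv}\ref{1propdiv4} and the fixed-point identity of Proposition \ref{minlem1}\ref{minlem11}. Both arguments are sound and of comparable length; the paper's route is slightly more economical and self-contained, since it needs only the containment $D_{F}(Q)\subseteq Q$ for an arbitrary prime $Q$ rather than the existence of minimal primes below $P$, while your route makes the role of minimality explicit by reusing the already-established identity $\mathfrak{m}=D_{F}(\mathfrak{m})$. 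Your treatment of \ref{mincor2}$\Leftrightarrow$\ref{mincor3} as a pure reformulation via Proposition \ref{lemdivi}\ref{lemdivi1}, together with the observation that one half of \ref{mincor3} is automatic from primeness (equivalently $D_{F}(P)\subseteq P$), matches the paper's \ref{mincor2}$\Rightarrow$\ref{mincor3} step and is correct.
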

\begin{proof}
\item [] \ref{mincor1}$\Rightarrow$ \ref{mincor2}: It follows by Proposition \ref{minlem1}\ref{minlem11}.
\item [] \ref{mincor2}$\Rightarrow$ \ref{mincor3}: It is a direct consequence of Proposition \ref{lemdivi}\ref{lemdivi1}.
\item [] \ref{mincor3}$\Rightarrow$ \ref{mincor1}: Let $Q$ be a prime filter containing $F$ such that $Q\subseteq P$. Consider $x\in P$. So $(F:x)\nsubseteq P$ and it implies that $x\in D_{F}(P)\subseteq D_{F}(Q)\subseteq Q$ and this shows that $P=Q$.
\end{proof}
\begin{corollary}
Let $\mathfrak{A}$ be a residuated lattice and $F$ be a filter of $\mathfrak{A}$. For any two distinct $F$-minimal prime filters $\mathfrak{m}_1$ and $\mathfrak{m}_2$ we have $\mathfrak{m}_1\vee^{\omega_F} \mathfrak{m}_2=A$.
\end{corollary}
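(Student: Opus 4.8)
The plan is to reduce the claim to the existence of a single element lying in both $F$ and the ideal generated by $\mathfrak{m}_1^{c}\cup\mathfrak{m}_2^{c}$, and then to produce such an element explicitly from the characterization of minimal prime filters.

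First I would record that each $\mathfrak{m}_i$ is an $\omega_F$-filter with a canonical representing ideal. Since $\mathfrak{m}_i$ is a prime filter, its complement $\mathfrak{m}_i^{c}$ is $\vee$-closed by \textsc{Remark} \ref{primclos}, and, being the complement of an upward-closed set, is also downward closed; as $\mathfrak{m}_i$ is proper, $0\in\mathfrak{m}_i^{c}$, so $\mathfrak{m}_i^{c}$ is an ideal of $\ell(\mathfrak{A})$. Because $\mathfrak{m}_i$ is $F$-minimal, Theorem \ref{mincor} gives $\mathfrak{m}_i=D_F(\mathfrak{m}_i)=\omega_F(\mathfrak{m}_i^{c})$, so $\mathfrak{m}_i\in\Omega_F(\mathfrak{A})$ with representing ideal $\mathfrak{m}_i^{c}$. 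Applying the join formula of Proposition \ref{dislattoff} with this choice yields $\mathfrak{m}_1\vee^{\omega_F}\mathfrak{m}_2=\omega_F(\mathfrak{m}_1^{c}\curlyvee\mathfrak{m}_2^{c})$, and by Proposition \ref{1propdiv}\ref{1propdiv6} this equals $A$ precisely when $F\cap(\mathfrak{m}_1^{c}\curlyvee\mathfrak{m}_2^{c})\neq\emptyset$. Thus everything reduces to exhibiting an element of $F$ lying in the ideal $\mathscr{I}(\mathfrak{m}_1^{c}\cup\mathfrak{m}_2^{c})$.

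The heart of the argument is the construction of this witness. Since $\mathfrak{m}_1$ and $\mathfrak{m}_2$ are distinct $F$-minimal prime filters they are incomparable: if $\mathfrak{m}_1\subseteq\mathfrak{m}_2$, then minimality of $\mathfrak{m}_2$ among prime filters containing $F$ would force $\mathfrak{m}_1=\mathfrak{m}_2$. Hence I may choose $x\in\mathfrak{m}_1\setminus\mathfrak{m}_2$. From $x\in\mathfrak{m}_1=D_F(\mathfrak{m}_1)$ together with the description $D_F(\mathfrak{m}_1)=\{a\in A\mid(F:a)\nsubseteq\mathfrak{m}_1\}$ of Proposition \ref{lemdivi}\ref{lemdivi1} (equivalently Theorem \ref{mincor}\ref{mincor3}), I obtain $(F:x)\nsubseteq\mathfrak{m}_1$, and so I can pick $u\in(F:x)\setminus\mathfrak{m}_1$. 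By the definition of the coannulet, $u\in(F:x)$ means $x\vee u\in F$. Moreover $x\in\mathfrak{m}_2^{c}$ and $u\in\mathfrak{m}_1^{c}$, so both lie in $\mathfrak{m}_1^{c}\cup\mathfrak{m}_2^{c}$ and hence $x\vee u\in\mathscr{I}(\mathfrak{m}_1^{c}\cup\mathfrak{m}_2^{c})=\mathfrak{m}_1^{c}\curlyvee\mathfrak{m}_2^{c}$. Therefore $x\vee u\in F\cap(\mathfrak{m}_1^{c}\curlyvee\mathfrak{m}_2^{c})$, which is the nonemptiness demanded above, and the claim follows.

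I expect the only delicate point to be the bookkeeping in the first paragraph, namely confirming that $\mathfrak{m}_i^{c}$ is a legitimate representing ideal so that the join formula of Proposition \ref{dislattoff} may be invoked with this specific choice (the value $\mathfrak{m}_1\vee^{\omega_F}\mathfrak{m}_2$ is intrinsically the supremum, so no ambiguity arises once a valid pair of representing ideals is fixed). The witness construction itself is then short: the substantive observation is that the element $x\in\mathfrak{m}_1\setminus\mathfrak{m}_2$ and its ``partner'' $u\in(F:x)\setminus\mathfrak{m}_1$ sit in the complements of $\mathfrak{m}_2$ and $\mathfrak{m}_1$ respectively, which is exactly what forces their join $x\vee u$ simultaneously into $F$ and into the generated ideal.
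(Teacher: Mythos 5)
Your proof is correct, but it reaches the conclusion by a different route than the paper. The paper picks $a\in\mathfrak{m}_1\setminus\mathfrak{m}_2$, $b\in\mathfrak{m}_2\setminus\mathfrak{m}_1$, and (via Theorem \ref{mincor}\ref{mincor3}) a partner $c\in(F:a)\setminus\mathfrak{m}_1$; it then observes $a\vee(b\vee c)\in F$, $(F:a)\subseteq\mathfrak{m}_2$ and $(F:b\vee c)\subseteq\mathfrak{m}_1$, and concludes by sandwiching: Corollary \ref{lastcor} gives $A=(F:a)\vee^{\omega_F}(F:b\vee c)$, and monotonicity of the join in $\Omega_F(\mathfrak{A})$ pushes this below $\mathfrak{m}_1\vee^{\omega_F}\mathfrak{m}_2$. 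You instead work directly with the definition of $\vee^{\omega_F}$: using Theorem \ref{mincor}\ref{mincor2} to identify $\mathfrak{m}_i^{c}$ as a legitimate representing ideal, you unfold $\mathfrak{m}_1\vee^{\omega_F}\mathfrak{m}_2=\omega_F(\mathfrak{m}_1^{c}\curlyvee\mathfrak{m}_2^{c})$ and reduce everything, via Proposition \ref{1propdiv}\ref{1propdiv6}, to exhibiting one element of $F$ in the joined ideal; your witness $x\vee u$ (with $x\in\mathfrak{m}_1\setminus\mathfrak{m}_2$ and $u\in(F:x)\setminus\mathfrak{m}_1$) does this with only one choice of element plus one partner, whereas the paper needs two elements plus a partner. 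Your route is more elementary --- it bypasses Corollary \ref{lastcor} and the coannulet sublattice result (Proposition \ref{30fxpro}) entirely --- and it has the added merit of explicitly addressing the well-definedness of the join formula in Proposition \ref{dislattoff} (the join is intrinsically the supremum in $\Omega_F(\mathfrak{A})$, so any valid representing ideals may be used), a point the paper leaves implicit. What the paper's argument buys in exchange is structural economy: it exhibits the comaximality of minimal primes as a direct consequence of the already-established comaximality of coannulets, which fits the paper's theme of transferring lattice structure from $\gamma_F(\mathfrak{A})$ to $\Omega_F(\mathfrak{A})$.
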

\begin{proof}
Let $\mathfrak{m}_1$ and $\mathfrak{m}_2$ be two distinct $F$-minimal prime filters of $\mathfrak{A}$. Let $a\in \mathfrak{m}_1\setminus \mathfrak{m}_2$ and $b\in \mathfrak{m}_2\setminus\mathfrak{m}_1$. By Proposition \ref{mincor}\ref{mincor3} follows that $(F:a)\nsubseteq \mathfrak{m}_1$ and so there exists some $c\in (F:a)\setminus \mathfrak{m}_1$. So $a\vee (b\vee c)\in F$, $(F:b\vee c)\subseteq \mathfrak{m}_1$ and $(F:a)\subseteq \mathfrak{m}_2$. Hence, by Corollary \ref{lastcor} we have $A=(F:a)\vee^{\omega_{F}}(F:b\vee c)\subseteq \mathfrak{m}_1\vee^{\omega_{F}} \mathfrak{m}_2$.
\end{proof}
\begin{proposition}\label{minosub}
Let $\mathfrak{A}$ be a residuated lattice, $F$ be a filter and $\mathscr{C}$ be a $\vee$-closed subset of $\mathfrak{A}$. If $\mathfrak{m}$ is an $\omega_{F}(\mathscr{C})$-minimal prime filter, then $\mathfrak{m}\cap \mathscr{C}=\emptyset$.
\end{proposition}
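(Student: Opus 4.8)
The plan is to reduce everything to the minimal‑prime characterization of Theorem \ref{mincor}, applied with the base filter taken to be $G:=\omega_{F}(\mathscr{C})$ itself. First I would record that $G$ is genuinely a filter by Proposition \ref{2propdiv}, so the hypothesis is sensible: $\mathfrak{m}$ is a prime filter with $G\subseteq\mathfrak{m}$ that is minimal among such. If $G=A$ there is nothing to prove, since no proper (hence no prime) filter contains $A$; thus I may assume $G$ is proper, which by Proposition \ref{profilt} is exactly the situation $F\cap\mathscr{C}=\emptyset$.

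The crux is a one‑line computation showing that $(G:c)=G$ for every $c\in\mathscr{C}$. The inclusion $G\subseteq(G:c)$ is automatic, since $G$ is a filter and $a\le a\vee c$. For the reverse, take $a\in(G:c)$, i.e. $a\vee c\in G=\omega_{F}(\mathscr{C})$; by Definition \ref{geomegafilter} there is some $c'\in\mathscr{C}$ with $c'\vee a\vee c\in F$. This is the single place where $\vee$-closedness is used: $c'\vee c\in\mathscr{C}$, and $(c'\vee c)\vee a\in F$ exhibits $a$ as a member of $\omega_{F}(\mathscr{C})=G$. Hence $(G:c)\subseteq G$, and the two inclusions give $(G:c)=G$.

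With this identity the conclusion is immediate. Since $\mathfrak{m}$ is a $G$-minimal prime filter, Theorem \ref{mincor} (the equivalence \ref{mincor1}$\Leftrightarrow$\ref{mincor3}, read with $G$ in the role of $F$) says that for each $x\in A$ exactly one of $x\in\mathfrak{m}$ and $(G:x)\subseteq\mathfrak{m}$ holds. Taking $x=c$ for an arbitrary $c\in\mathscr{C}$, the computation gives $(G:c)=G\subseteq\mathfrak{m}$, so the second alternative holds and the first therefore fails, i.e. $c\notin\mathfrak{m}$. As $c\in\mathscr{C}$ was arbitrary, $\mathfrak{m}\cap\mathscr{C}=\emptyset$.

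I expect the identity $(G:c)=G$ to be the only real content; everything else is bookkeeping. The subtle point is that the witness $c'$ supplied by membership in $\omega_{F}(\mathscr{C})$ must be absorbed into $c'\vee c\in\mathscr{C}$, which is precisely why the statement demands that $\mathscr{C}$ be $\vee$-closed rather than an arbitrary subset. An alternative route bypassing Theorem \ref{mincor} would argue straight from Theorem \ref{1mineq}: one verifies that the join‑closure of $\mathfrak{m}^{c}\cup\mathscr{C}$ still misses $G$ (using the same absorption trick together with Proposition \ref{profilt}\ref{profilt3}), so maximality of $\mathfrak{m}^{c}$ forces $\mathscr{C}\subseteq\mathfrak{m}^{c}$; but the coannihilator computation above is the cleaner of the two.
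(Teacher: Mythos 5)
Your proof is correct and is essentially the paper's own argument: both rest on Theorem \ref{mincor} applied with the base filter $G=\omega_{F}(\mathscr{C})$, and on the very same absorption step in which a witness $c'\in\mathscr{C}$ is replaced by $c\vee c'\in\mathscr{C}$ to land back in $\omega_{F}(\mathscr{C})$. The only cosmetic difference is that the paper runs it as a contradiction through clause \ref{mincor2} (taking $x\in\mathfrak{m}\cap\mathscr{C}$, extracting $y\notin\mathfrak{m}$ with $x\vee y\in\omega_{F}(\mathscr{C})$, and absorbing to get $y\in(F:x\vee c)\subseteq\omega_{F}(\mathscr{C})\subseteq\mathfrak{m}$), whereas you argue directly through clause \ref{mincor3} after isolating the identity $(G:c)=G$.
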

\begin{proof}
Let $\mathfrak{m}$ be an $\omega_{F}(\mathscr{C})$-minimal prime filter and $x\in \mathfrak{m}\cap \mathscr{C}$.  By Theorem \ref{mincor}, we have $x\in D_{\omega_{F}(\mathscr{C})}(\mathfrak{m})$ and it implies that $x\vee y\in \omega_{F}(\mathscr{C})$ for some $y\notin \mathfrak{m}$. So there exists $c\in \mathscr{C}$ such that $x\vee y\in (F:c)$ and it follows that $y\in (F:x\vee c)\subseteq \omega_{F}(\mathscr{C})\subseteq \mathfrak{m}$. It leads us to a contradiction.
\end{proof}

\begin{corollary}
Let $\mathfrak{A}$ be a residuated lattice, $F$ be a filter and $P$ be a prime filter. Then any $D_{F}(P)$-minimal prime filter is contained in $P$.
\end{corollary}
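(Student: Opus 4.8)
The plan is to recognize this corollary as an immediate specialization of Proposition~\ref{minosub}, taking the $\vee$-closed set $\mathscr{C}$ to be the complement $P^{c}$. First I would observe that since $P$ is a prime filter, \textsc{Remark}~\ref{primclos} guarantees that $P^{c}$ is a $\vee$-closed subset of $\mathfrak{A}$; moreover $P^{c}$ is non-empty because a prime filter is proper, so $0\in P^{c}$. This verifies the hypotheses needed to invoke Proposition~\ref{minosub} with $\mathscr{C}=P^{c}$.

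Next I would appeal to the very definition of the divisor filter, namely $D_{F}(P)=\omega_{F}(P^{c})$. Consequently, a $D_{F}(P)$-minimal prime filter is precisely an $\omega_{F}(P^{c})$-minimal prime filter, so that any such $\mathfrak{m}$ falls directly under the conclusion of Proposition~\ref{minosub}, yielding $\mathfrak{m}\cap P^{c}=\emptyset$. The only remaining step is the purely set-theoretic translation of this disjointness into the desired containment: $\mathfrak{m}\cap P^{c}=\emptyset$ says that no element of $\mathfrak{m}$ lies outside $P$, which is exactly $\mathfrak{m}\subseteq P$.

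Since every ingredient is already furnished by Proposition~\ref{minosub} and \textsc{Remark}~\ref{primclos}, there is essentially no obstacle here; the result is a one-line deduction. The only point that warrants a moment's care is confirming that the defining hypothesis of Proposition~\ref{minosub}---that $\mathscr{C}$ be a $\vee$-closed subset---is met by $P^{c}$, which is precisely the content of the primeness of $P$.
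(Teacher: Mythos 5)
Your proof is correct and is exactly the paper's own argument: the paper also proves this corollary by setting $\mathscr{C}=P^{c}$ and invoking Proposition~\ref{minosub}, with your version merely spelling out the details (primeness of $P$ giving $\vee$-closedness of $P^{c}$, and $D_{F}(P)=\omega_{F}(P^{c})$ by definition) that the paper leaves implicit.
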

\begin{proof}
By takin $\mathscr{C}=P^{c}$ it follows by Proposition \ref{minosub}.
\end{proof}
\begin{proposition}\label{omegaminpro}
Let $\mathfrak{A}$ be a residuated lattice, $F$ be a filter and $\mathscr{C}$ be a $\vee$-closed subset. We have
\[Min_{\omega_{F}(\mathscr{C})}(\mathfrak{A})=\{\mathfrak{m}|\mathfrak{m}\in Min_{F}(\mathfrak{A}),~\mathfrak{m}\cap \mathscr{C}=\emptyset\}.\]
\end{proposition}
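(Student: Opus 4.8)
The plan is to prove the two set inclusions separately, both times routing through the complement characterization of minimal prime filters in Theorem \ref{1mineq}: a prime filter $P$ containing a filter $G$ is $G$-minimal precisely when $P^{c}$ is a $\vee$-closed set that is maximal with respect to not meeting $G$. The engine driving both inclusions is a single observation, which I would isolate first: whenever a $\vee$-closed set $\mathscr{D}$ contains $\mathscr{C}$, the conditions $\mathscr{D}\cap F=\emptyset$ and $\mathscr{D}\cap \omega_{F}(\mathscr{C})=\emptyset$ are equivalent. The implication $\mathscr{D}\cap \omega_{F}(\mathscr{C})=\emptyset \Rightarrow \mathscr{D}\cap F=\emptyset$ is immediate from $F\subseteq \omega_{F}(\mathscr{C})$ (Proposition \ref{1propdiv}\ref{1propdiv3}). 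For the converse, assume $\mathscr{C}\subseteq \mathscr{D}$, $\mathscr{D}\cap F=\emptyset$, and suppose for contradiction some $a\in \mathscr{D}\cap \omega_{F}(\mathscr{C})$; by Proposition \ref{1propdiv}\ref{1propdiv1} there is $c\in \mathscr{C}$ with $a\vee c\in F$, but $c\in \mathscr{C}\subseteq \mathscr{D}$ and $a\in \mathscr{D}$ force $a\vee c\in \mathscr{D}$ by $\vee$-closedness, contradicting $\mathscr{D}\cap F=\emptyset$.

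For the inclusion $\subseteq$, take $\mathfrak{m}\in Min_{\omega_{F}(\mathscr{C})}(\mathfrak{A})$. Proposition \ref{minosub} yields $\mathfrak{m}\cap \mathscr{C}=\emptyset$, i.e. $\mathscr{C}\subseteq \mathfrak{m}^{c}$, so $\mathfrak{m}$ lands on the right-hand set once I know it is $F$-minimal. By Theorem \ref{1mineq}, $\mathfrak{m}^{c}$ is $\vee$-closed and maximal with respect to not meeting $\omega_{F}(\mathscr{C})$, and since $F\subseteq \omega_{F}(\mathscr{C})$ it in particular misses $F$. To upgrade to $F$-minimality I check maximality against $F$: if $\mathscr{D}\supseteq \mathfrak{m}^{c}$ is $\vee$-closed with $\mathscr{D}\cap F=\emptyset$, then $\mathscr{C}\subseteq \mathfrak{m}^{c}\subseteq \mathscr{D}$, so the observation gives $\mathscr{D}\cap \omega_{F}(\mathscr{C})=\emptyset$, whence maximality of $\mathfrak{m}^{c}$ against $\omega_{F}(\mathscr{C})$ forces $\mathscr{D}=\mathfrak{m}^{c}$. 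Thus $\mathfrak{m}^{c}$ is maximal against $F$, and Theorem \ref{1mineq} gives $\mathfrak{m}\in Min_{F}(\mathfrak{A})$.

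For the reverse inclusion $\supseteq$, take $\mathfrak{m}\in Min_{F}(\mathfrak{A})$ with $\mathfrak{m}\cap \mathscr{C}=\emptyset$; again $\mathscr{C}\subseteq \mathfrak{m}^{c}$, and by Theorem \ref{1mineq} the set $\mathfrak{m}^{c}$ is $\vee$-closed and maximal against $F$. Applying the observation with $\mathscr{D}=\mathfrak{m}^{c}$ shows $\mathfrak{m}^{c}$ also misses $\omega_{F}(\mathscr{C})$. For maximality, any $\vee$-closed $\mathscr{D}\supseteq \mathfrak{m}^{c}$ with $\mathscr{D}\cap \omega_{F}(\mathscr{C})=\emptyset$ automatically satisfies $\mathscr{D}\cap F=\emptyset$, so maximality of $\mathfrak{m}^{c}$ against $F$ gives $\mathscr{D}=\mathfrak{m}^{c}$. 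Hence $\mathfrak{m}^{c}$ is maximal against $\omega_{F}(\mathscr{C})$, and Theorem \ref{1mineq} yields $\mathfrak{m}\in Min_{\omega_{F}(\mathscr{C})}(\mathfrak{A})$.

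I expect the only genuine obstacle to be the forward half of the key observation, namely that $\vee$-closedness combined with $\mathscr{C}\subseteq \mathscr{D}$ promotes ``misses $F$'' to ``misses $\omega_{F}(\mathscr{C})$''; the rest is bookkeeping with the complement characterization, which makes the two directions nearly symmetric. It is also worth recording at the outset that $\omega_{F}(\mathscr{C})$ really is a filter (Proposition \ref{2propdiv}), so that the symbol $Min_{\omega_{F}(\mathscr{C})}(\mathfrak{A})$ is meaningful and Theorem \ref{1mineq} applies to it.
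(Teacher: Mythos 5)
Your proof is correct, and it takes a genuinely different route from the paper's. The paper argues through the divisor-filter machinery of Section \ref{sec3}: for the inclusion $\supseteq$ it chases the chain $\omega_{F}(\mathscr{C})\subseteq D_{F}(\mathfrak{m})=\mathfrak{m}\subseteq D_{\omega_{F}(\mathscr{C})}(\mathfrak{m})\subseteq \mathfrak{m}$ (using Proposition \ref{1propdiv}, Proposition \ref{minlem1} and Proposition \ref{divisorfilt}) and then invokes the characterization $P\in Min_{G}(\mathfrak{A})\Leftrightarrow P=D_{G}(P)$ of Theorem \ref{mincor}; for $\subseteq$ it uses Proposition \ref{minosub} and then shows, via Proposition \ref{lemdivi}, that any prime filter $\mathfrak{w}$ with $F\subseteq\mathfrak{w}\subseteq\mathfrak{m}$ already contains $\omega_{F}(\mathscr{C})$, so that $\omega_{F}(\mathscr{C})$-minimality forces $\mathfrak{w}=\mathfrak{m}$. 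You instead bypass the divisor filters entirely and work at the level of the Zorn-type characterization, Theorem \ref{1mineq}: your single ``transfer'' observation --- that for a $\vee$-closed $\mathscr{D}\supseteq\mathscr{C}$ the conditions $\mathscr{D}\cap F=\emptyset$ and $\mathscr{D}\cap\omega_{F}(\mathscr{C})=\emptyset$ coincide --- lets maximality of $\mathfrak{m}^{c}$ against one of the two filters be converted into maximality against the other, in both directions. (You do still need Proposition \ref{minosub} to get $\mathscr{C}\subseteq\mathfrak{m}^{c}$ in the forward inclusion, exactly as the paper does, and you correctly note that Proposition \ref{2propdiv} is needed so that $Min_{\omega_{F}(\mathscr{C})}(\mathfrak{A})$ makes sense.) The trade-off: the paper's argument is shorter given the apparatus of $D_{F}(P)$, Theorem \ref{mincor} and Proposition \ref{lemdivi}, and it keeps divisor filters in the foreground, which is the theme of that section; your argument is more self-contained and symmetric, needing only Proposition \ref{1propdiv}(\ref{1propdiv1},\ref{1propdiv3}), Proposition \ref{2propdiv}, Proposition \ref{minosub} and Theorem \ref{1mineq}, and it isolates cleanly the one place where $\vee$-closedness of $\mathscr{C}$ actually enters.
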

\begin{proof}
Set $\mu=\{\mathfrak{m}\in Min_{F}(\mathfrak{A})|\mathfrak{m}\cap \mathscr{C}=\emptyset\}$. If $\mathfrak{m}\in \mu$, then by Proposition \ref{1propdiv} and Proposition \ref{minlem1} follows that
\[\omega_{F}(\mathscr{C})\subseteq D_{F}(\mathfrak{m})(=\mathfrak{m})\subseteq D_{\omega_{F}(\mathscr{C})}(\mathfrak{m})\subseteq \mathfrak{m}.\]

It follows that $\mathfrak{m}=D_{\omega_{F}(\mathscr{C})}(\mathfrak{m})$ and so $\mathfrak{m}\in Min_{\omega_{F}(\mathscr{C})}(\mathfrak{A})$.

Conversely, let $\mathfrak{m}\in Min_{\omega_{F}(\mathscr{C})}(\mathfrak{A})$. By Proposition \ref{1propdiv}\ref{1propdiv3} follows that $F\subseteq \mathfrak{m}$ and by Proposition \ref{minosub} follows that $\mathfrak{m}\cap \mathscr{C}=\emptyset$. Suppose that $\mathfrak{w}$ is a prime filter containing $F$ such that $\mathfrak{w}\cap \mathscr{C}=\emptyset$ and $\mathfrak{w}\subseteq \mathfrak{m}$. Applying Proposition \ref{lemdivi}, it shows that $\omega_{F}(\mathscr{C})\subseteq D_{F}(\mathfrak{w})\subseteq \mathfrak{w}$. Therefore, $\mathfrak{w}$ is a prime filter containing $\omega_{F}(\mathscr{C})$ and so $\mathfrak{w}=\mathfrak{m}$. It shows that $\mathfrak{m}$ is an $F$-minimal prime filter and so $\mathfrak{m}\in \mu$.
\end{proof}

The following corollary is an immediate consequence of Proposition \ref{omegaminpro}.
\begin{corollary}\label{lastcoce}
Let $\mathfrak{A}$ be a residuated lattice, $F$ be a filter and $P$ be a prime filter. We have
\[Min_{D_{F}(P)}(\mathfrak{A})=\{\mathfrak{m}|\mathfrak{m}\in Min_{F}(\mathfrak{A}),~\mathfrak{m}\subseteq P\}.\]
\end{corollary}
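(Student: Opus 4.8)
The plan is to specialize Proposition \ref{omegaminpro} to the $\vee$-closed set $\mathscr{C}=P^{c}$. First I would observe that since $P$ is a prime filter, \textsc{Remark} \ref{primclos} guarantees that $P^{c}$ is a $\vee$-closed subset of $\mathfrak{A}$, so the hypotheses of Proposition \ref{omegaminpro} are met with this choice of $\mathscr{C}$. By the very definition of the $F$-divisor filter we have $\omega_{F}(P^{c})=D_{F}(P)$, and hence Proposition \ref{omegaminpro} yields at once
\[Min_{D_{F}(P)}(\mathfrak{A})=Min_{\omega_{F}(P^{c})}(\mathfrak{A})=\{\mathfrak{m}|\mathfrak{m}\in Min_{F}(\mathfrak{A}),~\mathfrak{m}\cap P^{c}=\emptyset\}.\]

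The only remaining point is to rewrite the condition $\mathfrak{m}\cap P^{c}=\emptyset$ appearing in the index set. This is a purely set-theoretic equivalence: $\mathfrak{m}\cap P^{c}=\emptyset$ holds exactly when $\mathfrak{m}\subseteq P$. Substituting this reformulation into the description above produces the asserted identity
\[Min_{D_{F}(P)}(\mathfrak{A})=\{\mathfrak{m}|\mathfrak{m}\in Min_{F}(\mathfrak{A}),~\mathfrak{m}\subseteq P\}.\]

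Since both steps are entirely routine, there is no genuine obstacle to overcome; the statement really is an immediate corollary of Proposition \ref{omegaminpro}, the whole content being the observation that $D_{F}(P)$ is nothing but $\omega_{F}(\mathscr{C})$ for the canonical choice $\mathscr{C}=P^{c}$. The only item worth double-checking is that $P^{c}$ is in fact $\vee$-closed, which is precisely the primeness of $P$ recorded in \textsc{Remark} \ref{primclos}, and that the translation between disjointness from $P^{c}$ and containment in $P$ introduces no hidden assumption.
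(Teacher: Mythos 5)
Your proposal is correct and is exactly the paper's own argument: specialize Proposition \ref{omegaminpro} to $\mathscr{C}=P^{c}$, identify $\omega_{F}(P^{c})$ with $D_{F}(P)$, and translate disjointness from $P^{c}$ into containment in $P$. The paper's proof is a one-line version of this; you have merely made explicit the routine checks (that $P^{c}$ is $\vee$-closed via \textsc{Remark} \ref{primclos}) that the paper leaves implicit.
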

\begin{proof}
By takin $\mathscr{C}=P^{c}$ it follows by Proposition \ref{omegaminpro}.
\end{proof}
\begin{corollary}\label{ofmpr}
Let $\mathfrak{A}$ be a residuated lattice, $F$ be a filter and $\mathscr{C}$ be a $\vee$-closed subset. We have
\[\omega_{F}(\mathscr{C})=\bigcap \{\mathfrak{m}|\mathfrak{m}\in Min_{F}(\mathfrak{A}),~\mathfrak{m}\cap \mathscr{C}=\emptyset\}.\]
\end{corollary}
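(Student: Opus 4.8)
The plan is to recognize the indexing set on the right-hand side as precisely a collection of minimal prime filters relative to $\omega_{F}(\mathscr{C})$, and then to invoke the general representation of a filter as the intersection of its minimal prime filters. The entire content is already assembled in Proposition \ref{omegaminpro} together with Corollary \ref{mininters}, so the argument is essentially a substitution.

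First I would note that $\omega_{F}(\mathscr{C})$ is itself a filter: since $\mathscr{C}$ is $\vee$-closed, Proposition \ref{2propdiv} guarantees that $\omega_{F}(\mathscr{C})\in \mathscr{F}(\mathfrak{A})$, and hence $\mathscr{F}(\omega_{F}(\mathscr{C}))=\omega_{F}(\mathscr{C})$. Applying Corollary \ref{mininters}\ref{mininters2} with the subset $X:=\omega_{F}(\mathscr{C})$ then yields
\[
\omega_{F}(\mathscr{C})=\mathscr{F}(\omega_{F}(\mathscr{C}))=\bigcap Min_{\omega_{F}(\mathscr{C})}(\mathfrak{A}).
\]
Finally, Proposition \ref{omegaminpro} identifies the index set exactly: $Min_{\omega_{F}(\mathscr{C})}(\mathfrak{A})=\{\mathfrak{m}\mid \mathfrak{m}\in Min_{F}(\mathfrak{A}),~\mathfrak{m}\cap \mathscr{C}=\emptyset\}$. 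Substituting this description into the intersection above produces the claimed equality.

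I do not expect any genuine obstacle here, as the corollary is an immediate consequence of Proposition \ref{omegaminpro}. The only point requiring a moment of care is the first step, namely observing that $\omega_{F}(\mathscr{C})$ is already a filter so that Corollary \ref{mininters}\ref{mininters2} applies directly, without needing to pass to a generated filter. One may also sanity-check the degenerate case: if $F\cap\mathscr{C}\neq\emptyset$ then the index set is empty (any such $\mathfrak{m}$ would contain $F$ and hence an element of $\mathscr{C}$, contradicting $\mathfrak{m}\cap\mathscr{C}=\emptyset$), so the right-hand side collapses to $A$, which agrees with $\omega_{F}(\mathscr{C})=A$ from Proposition \ref{1propdiv}\ref{1propdiv6}.
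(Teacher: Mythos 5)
Your proof is correct and takes essentially the same route as the paper: the paper's proof of Corollary \ref{ofmpr} is precisely the combination of Corollary \ref{mininters} and Proposition \ref{omegaminpro} that you spelled out. The only difference is that you made explicit the supporting details the paper leaves implicit, namely that $\omega_{F}(\mathscr{C})$ is already a filter (Proposition \ref{2propdiv}) so Corollary \ref{mininters}\ref{mininters2} applies directly, and the sanity check of the degenerate case $F\cap\mathscr{C}\neq\emptyset$.
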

\begin{proof}
It follows by Corollary \ref{mininters} and Proposition \ref{omegaminpro}.
\end{proof}
\begin{corollary}\label{dfmpr}
Let $\mathfrak{A}$ be a residuated lattice, $F$ be a filter and $P$ be a prime filter. We have
\[D_{F}(P)=\bigcap \{\mathfrak{m}|\mathfrak{m}\in Min_{F}(\mathfrak{A}),~\mathfrak{m}\subseteq P\}.\]
\end{corollary}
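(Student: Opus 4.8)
The plan is to recognize this as an immediate specialization of Corollary \ref{ofmpr}, exactly in the same spirit as Corollary \ref{lastcoce} and the corollary following Proposition \ref{minosub}. The first step is to observe that since $P$ is a prime filter, its complement $P^{c}$ is a $\vee$-closed subset of $\mathfrak{A}$ by \textsc{Remark} \ref{primclos}, so the machinery built for $\vee$-closed sets applies. By the very definition of $F$-divisors we have $D_{F}(P)=\omega_{F}(P^{c})$, which reduces the claim to computing $\omega_{F}(P^{c})$.

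Next I would simply invoke Corollary \ref{ofmpr} with the choice $\mathscr{C}=P^{c}$. That corollary gives
\[\omega_{F}(P^{c})=\bigcap \{\mathfrak{m}\mid \mathfrak{m}\in Min_{F}(\mathfrak{A}),~\mathfrak{m}\cap P^{c}=\emptyset\}.\]
The only remaining point is to rewrite the indexing condition: for a prime filter $\mathfrak{m}$, the equality $\mathfrak{m}\cap P^{c}=\emptyset$ holds precisely when $\mathfrak{m}\subseteq P$. Substituting this equivalence into the index set yields exactly the asserted formula $D_{F}(P)=\bigcap \{\mathfrak{m}\mid \mathfrak{m}\in Min_{F}(\mathfrak{A}),~\mathfrak{m}\subseteq P\}$.

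There is essentially no obstacle here, since all the analytical content has already been absorbed into Corollary \ref{ofmpr} (which in turn rests on Proposition \ref{omegaminpro} and Corollary \ref{mininters}); the argument is a one-line substitution of $\mathscr{C}=P^{c}$ together with the trivial set-theoretic translation of the disjointness condition into an inclusion. The only thing worth double-checking is that the substitution is legitimate, i.e. that $P^{c}$ genuinely is $\vee$-closed, which is guaranteed by \textsc{Remark} \ref{primclos}. Thus the proof should consist of a single sentence of the form ``By taking $\mathscr{C}=P^{c}$ it follows by Corollary \ref{ofmpr}.''
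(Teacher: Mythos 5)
Your proposal is correct and matches the paper's own proof exactly: the paper also proves this by taking $\mathscr{C}=P^{c}$ and invoking Corollary \ref{ofmpr}, with the observation that $D_{F}(P)=\omega_{F}(P^{c})$ and that disjointness from $P^{c}$ is the same as inclusion in $P$. Your write-up just makes explicit the details (the $\vee$-closedness of $P^{c}$ via \textsc{Remark} \ref{primclos} and the set-theoretic translation) that the paper leaves implicit.
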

\begin{proof}
By takin $\mathscr{C}=P^{c}$ it follows by Corollary \ref{ofmpr}.
\end{proof}
\section{$n$-normal residuated lattices}\label{sec4}

In this section we introduce and study the notions of normal and $n$-normal residuated lattices which are inspired by the study of normal lattices \citep{cor} and $n$-normal lattices \citep{cor74}. We characterize these classes of residuated lattices in terms of $\omega$-filters.


\begin{lemma}\label{npri}
Let $\mathfrak{A}$ be a residuated lattice and $F$ be a filter of $\mathfrak{A}$. For a given integer $n\geq 2$, the following assertions are equivalent:
\begin{enumerate}
\item  [$(1)$ \namedlabel{npri1}{$(1)$}] For any filters $F_1,\cdots,F_n$ such that $F_i\cap F_j=F$ for any $i\neq j$, there exists $k$ such that $F_k=F$;
\item  [$(2)$ \namedlabel{npri2}{$(2)$}] for any filters $F_1,\cdots,F_n$ such that $F_i\cap F_j\subseteq F$ for any $i\neq j$, there exists $k$ such that $F_k\subseteq F$;
\item  [$(3)$ \namedlabel{npri3}{$(3)$}] for any $x_1,\cdots,x_n\in A$ which are ``\textit{pairwise}" in $F$, i.e. $x_i\vee x_j\in F$ for any $i\neq j$, there exists $k$ such that $x_k\in F$;
\item  [$(4)$ \namedlabel{npri4}{$(4)$}] $F$ is the intersection of at most $n-1$ distinct prime filters.
\end{enumerate}
\end{lemma}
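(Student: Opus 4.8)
The plan is to prove the three ``elementary'' equivalences $\ref{npri1}\Leftrightarrow\ref{npri2}\Leftrightarrow\ref{npri3}$ by direct manipulation inside $\mathscr{F}(\mathfrak{A})$, and then to close the circle through the minimal prime filters of $F$, which by Corollary \ref{mininters}\ref{mininters2} (applied to $X=F$, so that $\mathscr{F}(F)=F$) satisfy $F=\bigcap Min_{F}(\mathfrak{A})$. The passage among the filter forms \ref{npri1}, \ref{npri2} and the element form \ref{npri3} is routine; the genuine content is the equivalence of these with the representation \ref{npri4}, and there the hard direction is \ref{npri3}$\Rightarrow$\ref{npri4}.

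For \ref{npri2}$\Rightarrow$\ref{npri1} I would note that $F_{i}\cap F_{j}=F$ forces $F\subseteq F_{k}$ for every $k$ (since $F=F_{k}\cap F_{j}\subseteq F_{k}$ for any $j\neq k$), so the index $k$ with $F_{k}\subseteq F$ furnished by \ref{npri2} actually gives $F_{k}=F$. For \ref{npri1}$\Rightarrow$\ref{npri2}, given $F_{i}\cap F_{j}\subseteq F$ I would pass to $G_{i}:=F_{i}\veebar F$; as $\mathscr{F}(\mathfrak{A})$ is a distributive lattice, the identity $(F_{i}\veebar F)\cap(F_{j}\veebar F)=(F_{i}\cap F_{j})\veebar F=F$ holds, so \ref{npri1} applied to $G_{1},\dots,G_{n}$ yields some $G_{k}=F$, i.e.\ $F_{k}\subseteq F$. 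For \ref{npri2}$\Leftrightarrow$\ref{npri3} I would use principal filters: with $F_{i}=\mathscr{F}(x_{i})$, Remark \ref{genfilprop}\ref{genfilprop3} gives $F_{i}\cap F_{j}=\mathscr{F}(x_{i}\vee x_{j})$, which is contained in $F$ precisely when $x_{i}\vee x_{j}\in F$, converting \ref{npri2} into \ref{npri3}; conversely, starting from filters with $F_{i}\cap F_{j}\subseteq F$ and none contained in $F$, a choice $x_{i}\in F_{i}\setminus F$ satisfies $x_{i}\vee x_{j}\in F_{i}\cap F_{j}\subseteq F$, so \ref{npri3} delivers the required $x_{k}\in F$.

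The implication \ref{npri4}$\Rightarrow$\ref{npri3} is a pigeonhole argument. Writing $F=P_{1}\cap\cdots\cap P_{r}$ with $r\leq n-1$ distinct prime filters and taking $x_{1},\dots,x_{n}$ pairwise in $F$, primeness of each $P_{s}$ allows \emph{at most one} index $i$ with $x_{i}\notin P_{s}$, for two such indices would give $x_{i}\vee x_{j}\in F\subseteq P_{s}$ with neither $x_{i}$ nor $x_{j}$ in $P_{s}$. Since there are at most $n-1$ filters but $n$ indices, some $x_{k}$ lies in every $P_{s}$, hence in $F$.

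The main obstacle is \ref{npri3}$\Rightarrow$\ref{npri4}, which I would attack by showing that $F$ admits at most $n-1$ distinct $F$-minimal prime filters; their intersection, equal to $F$ by $F=\bigcap Min_{F}(\mathfrak{A})$, is then the representation required in \ref{npri4}. Arguing contrapositively, I would assume $\mathfrak{m}_{1},\dots,\mathfrak{m}_{n}$ are distinct $F$-minimal prime filters and try to manufacture $x_{1},\dots,x_{n}$ violating \ref{npri3}. A prime filter containing a finite intersection of filters must contain one of them (immediate from Remark \ref{primclos}, since $P^{c}$ is $\vee$-closed), so by minimality $\bigcap_{j\neq i}\mathfrak{m}_{j}\nsubseteq\mathfrak{m}_{i}$, and I may choose $a_{i}\in\bigcap_{j\neq i}\mathfrak{m}_{j}\setminus\mathfrak{m}_{i}$. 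Then each $a_{i}\vee a_{j}$ lies in all of $\mathfrak{m}_{1},\dots,\mathfrak{m}_{n}$ and no $a_{i}$ lies in $\mathfrak{m}_{i}$ (hence not in $F$); the delicate point, and the step I expect to be the crux, is that $a_{i}\vee a_{j}$ need not land in $F=\bigcap Min_{F}(\mathfrak{A})$ when further minimal primes are present, so one must force the pairwise joins to lie in \emph{every} $F$-minimal prime, i.e.\ arrange that the ``avoidance sets'' $\{\mathfrak{m}\in Min_{F}(\mathfrak{A})\mid x_{i}\notin\mathfrak{m}\}$ are pairwise disjoint. To achieve this I would bring in Theorem \ref{mincor}\ref{mincor3}, that an $F$-minimal prime $\mathfrak{m}$ contains exactly one of $x$ and $(F:x)$, which ties membership of $a_{i}$ across all of $Min_{F}(\mathfrak{A})$ to the coannihilator $(F:a_{i})$, and then replace the $a_{i}$ by appropriate joins drawn from these coannihilators using the calculus of Proposition \ref{4fxpro}, so that the resulting family has pairwise joins in every minimal prime while retaining $x_{i}\notin\mathfrak{m}_{i}$. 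Equivalently, this is the assertion that the complete Boolean algebra $\Gamma_{F}(\mathfrak{A})$ carries at most $n-1$ pairwise disjoint nonzero coannulet-complements, and I expect the separation of the minimal primes by coannihilators to be precisely what makes the construction go through.
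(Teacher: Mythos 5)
Your skeleton is sound and most of it is correct: the passage \ref{npri1}$\Leftrightarrow$\ref{npri2} via the filters $F_i\veebar F$ and distributivity of $\mathscr{F}(\mathfrak{A})$ is exactly the paper's argument; your \ref{npri2}$\Leftrightarrow$\ref{npri3} via principal filters and Remark \ref{genfilprop}\ref{genfilprop3} is a legitimate (slightly cleaner) substitute for the paper's computation with $\mathscr{F}(F,x_i)$ and \ref{res2}; the pigeonhole proof of \ref{npri4}$\Rightarrow$\ref{npri3} is fine (the paper does \ref{npri4}$\Rightarrow$\ref{npri1} the same way); and reducing \ref{npri4} to ``$Min_F(\mathfrak{A})$ has at most $n-1$ elements'' via Corollary \ref{mininters}\ref{mininters2} is valid. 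The genuine gap is in the one direction that carries the weight of the lemma, \ref{npri3}$\Rightarrow$\ref{npri4}. You correctly diagnose the obstruction---with $a_i\in\bigcap_{j\neq i}\mathfrak{m}_j\setminus\mathfrak{m}_i$ the joins $a_i\vee a_j$ only reach $\bigcap_{k=1}^{n}\mathfrak{m}_k$, which can be strictly larger than $F$---and you name the right tool (Theorem \ref{mincor}\ref{mincor3}), but the proposal stops at ``replace the $a_i$ by appropriate joins \dots\ I expect the construction to go through.'' That is a plan, not a proof: no family $x_1,\dots,x_n$ violating \ref{npri3} is ever produced, so the contrapositive you set up is never established; the closing reformulation in terms of disjoint elements of $\Gamma_F(\mathfrak{A})$ restates the goal rather than proving it.

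The gap is fillable along the lines you indicate. For each ordered pair $i\neq j$ one has $a_i\in\mathfrak{m}_j=D_F(\mathfrak{m}_j)$ by Theorem \ref{mincor}, so there exists $e_{ij}\notin\mathfrak{m}_j$ with $a_i\vee e_{ij}\in F$; put $x_i=a_i\vee\bigvee_{j\neq i}e_{ji}$. Every joinand of $x_i$ avoids $\mathfrak{m}_i$, so $x_i\notin\mathfrak{m}_i$ (hence $x_i\notin F$) by primeness of $\mathfrak{m}_i$, while for $i\neq j$ we get $x_i\vee x_j\geq a_i\vee e_{ij}\in F$, and filters are upward closed, so $x_i\vee x_j\in F$; thus \ref{npri3} fails, as required. (This is in substance Lemma \ref{comincor} of the paper, which however appears \emph{after} the present lemma, so it is machinery you must rebuild, as you began to do.) You should also know that the paper's own proof of \ref{npri3}$\Rightarrow$\ref{npri4} is entirely different and more elementary: assuming $F$ is not prime, take the largest $m<n$ for which assertion \ref{npri3} fails, with witnesses $a_1,\dots,a_m$ pairwise in $F$ but none in $F$; maximality of $m$ then shows directly that each coannihilator $(F:a_i)$ is a prime filter and that $F=\bigcap_{i=1}^{m}(F:a_i)$. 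That route needs nothing beyond Proposition \ref{1fxpro}, whereas yours, even once completed, rests on the full machinery of minimal prime filters (Theorem \ref{mincor} and Corollary \ref{mininters}).
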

\begin{proof}
\item [] \ref{npri1}$\Rightarrow$\ref{npri2}: Let $F_1,\cdots,F_n$ be $n$ filters such that $F_i\cap F_j\subseteq F$ for any $i\neq j$. Consider $F_1\veebar F,\cdots,F_n\veebar F$. For any $i\neq j$ we have $(F_i\veebar f)\cap (F_j\veebar f)=(F_i\cap F_j)\veebar F=F$. So there exists $k$ such that $F_k\veebar F=F$. So $F_k\subseteq F$ .
\item [] \ref{npri2}$\Rightarrow$\ref{npri3}: Let $x_1,\cdots,x_n\in A$ which are pairwise in $F$. Consider $\mathscr{F}(F,x_1),\cdots,\mathscr{F}(F,x_n)$. Let $z\in \mathscr{F}(F,x_i)\cap \mathscr{F}(F,x_j)$. So by \textsc{Remark} \ref{genfilprop}\ref{genfilprop1} there exist $f_i,f_j\in P$ and integers $n_i,n_j$ such that $z\geq (f_i\odot x_{i}^{n_i})\vee (f_j\odot x_{j}^{n_j})$. By \ref{res2} we deduce $z\geq (f_i\vee f_j)\odot (f_i\vee x_{j}^{n_j})\odot (x_{i}^{n_i}\vee f_j)\odot (x_{i}\vee x_{j})^{n_in_j}$ and so $z\in F$. Hence $\mathscr{F}(F,x_i)\cap \mathscr{F}(F,x_j)\subseteq F$. So there exists $k$ such that $x_k\in \mathscr{F}(F,x_k)\subseteq F$.
\item [] \ref{npri3}$\Rightarrow$\ref{npri4}: If $n=2$, then $F$ is a prime filter and so \ref{npri4} is obviously holds. Let $m<n$ be the largest integer such that \ref{npri3} does not hold for $F$. So there exist $a_{1},\cdots,a_{m}\in A$ pairwise in $F$, yet $a_{1},\cdots,a_{m}\notin F$. We show that $(F:a_i)$ is a prime filter for any $1\leq i\leq m$. Consider $1\leq i\leq m$. By Proposition \ref{1fxpro}\ref{1fxpro2} follows that $(F:a_i)$ is a proper filter. Let $b\vee c\in (F:a_i)$. Consider the set of $m+1$ elements $\{a_1,\cdots,a_{i-1},b\vee a_i,c\vee a_i,a_{i+1},\cdots,a_{m}\}$. This set is pairwise in $F$ and so $b\vee a_i\in F$ or $c\vee a_i\in F$. It implies that $b\in (F:a_i)$ or $c\in (F:a_{i})$, hence $(F:a_{i})$ is a prime filter.

   Obviously, we have $F\subseteq \cap_{i=1}^{m} (F:a_i)$. If $w\in \cap_{i=1}^{m} (F:a_i)$, then $a_1,\cdots,a_{m},w$ are pairwise in $F$ and so $w\in F$. It shows that $F=\cap_{i=1}^{m} (F:a_i)$ is the intersection of $m<n$ prime filters.
\item [] \ref{npri4}$\Rightarrow$\ref{npri1}: Let $P_1\cdots,P_m$ ($1\leq m<n$) are distinct prime filters such that   $F=\cap_{i=1}^{m}P_i$. Let $F_1,\cdots,F_n$ be $n$ filters of $\mathfrak{A}$ such that $F_i\cap F_j=F$ for any $i\neq j$. Let $F_j\nsubseteq P_{i_j}$ for $1\leq j,i_j\leq m$. So by Pigeonhole principle there exists some $m<k$ such that $F_k\subseteq \cap_{i=1}^{m}P_i=F$. Hence, $F_k=F$.
\end{proof}
\begin{definition}
Let $\mathfrak{A}$ be a residuated lattice and $P$ be a proper filter of $\mathfrak{A}$. $P$ is called \textit{$n$-prime} if it satisfies any of the equivalent assertions of Lemma \ref{npri}.
\end{definition}
\begin{definition}\label{conordef}
Let $\mathfrak{A}$ be a residuated lattice and $F$ be a filter of $\mathfrak{A}$. $\mathfrak{A}$ is called \textit{$n$-normal with respect to $F$} if any prime filter containing $F$ contains at most $n$ $F$-minimal prime filter. $\mathfrak{A}$ is called \textit{normal with respect to $F$} if it is $1$-normal with respect to $F$. $\mathfrak{A}$ is called \textit{normal} if it is normal with respect to $\{1\}$.
\end{definition}
\begin{lemma}\label{comincor}
Let $\mathfrak{A}$ be a residuated lattice and $F$ be a filter of $\mathfrak{A}$ and $n\geq 2$. If $\mathfrak{m}_1,\cdots,\mathfrak{m}_n$ are distinct $F$-minimal prime filters. Then there exist $a_1,\cdots,a_n\in A$ which are pairwise in $F$ and $a_i\notin \mathfrak{m}_i$ for any $1\leq i\leq n$. Moreover, if we set $b_i=\odot_{j=1\atop j\neq i}^{n} a_j$, the following assertions hold:
\begin{enumerate}
  \item [$(1)$ \namedlabel{comincor1}{$(1)$}] $b_i\in \mathfrak{m}_i$ for $1\leq i\leq n$;
  \item [$(2)$ \namedlabel{comincor2}{$(2)$}] $\vee_{i=1}^{n}b_i\in F$;
  \item [$(3)$ \namedlabel{comincor3}{$(3)$}] $(F:\vee_{j=1\atop j\neq i}^{n}b_j)\subseteq \mathfrak{m}_i$ for any $1\leq i\leq n$.
\end{enumerate}
\end{lemma}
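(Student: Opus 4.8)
The heart of the lemma is the simultaneous construction of the elements $a_1,\dots,a_n$; once these are produced with the stated properties, assertions \ref{comincor1}--\ref{comincor3} follow quickly. The plan is to assemble each $a_i$ from pairwise data. Fix a pair $i\neq k$. Since $\mathfrak{m}_i$ and $\mathfrak{m}_k$ are distinct $F$-minimal prime filters they are incomparable (if one contained the other, minimality would force equality), so I may pick $p\in\mathfrak{m}_i\setminus\mathfrak{m}_k$. As $p\in\mathfrak{m}_i$ and $\mathfrak{m}_i$ is $F$-minimal, Theorem \ref{mincor}\ref{mincor3} prevents $\mathfrak{m}_i$ from also containing $(F:p)$, so there is $q\in(F:p)\setminus\mathfrak{m}_i$; by the definition of $(F:p)$ this means $p\vee q\in F$. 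Recording $s_{ki}:=p\notin\mathfrak{m}_k$ and $s_{ik}:=q\notin\mathfrak{m}_i$, I obtain for every ordered pair an element $s_{ik}\notin\mathfrak{m}_i$ whose partner satisfies $s_{ik}\vee s_{ki}\in F$. I then set $a_i:=\vee_{k\neq i}s_{ik}$. Because each $s_{ik}\notin\mathfrak{m}_i$ and the complement of the prime filter $\mathfrak{m}_i$ is $\vee$-closed (Remark~\ref{primclos}), we get $a_i\notin\mathfrak{m}_i$; and since $a_i\geq s_{ij}$ and $a_j\geq s_{ji}$, upward closure of $F$ gives $a_i\vee a_j\geq s_{ij}\vee s_{ji}\in F$, hence $a_i\vee a_j\in F$. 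Thus the $a_i$ are pairwise in $F$ with $a_i\notin\mathfrak{m}_i$.

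For \ref{comincor1} I fix $i$ and take any $j\neq i$: from $a_i\vee a_j\in F\subseteq\mathfrak{m}_i$, primeness of $\mathfrak{m}_i$, and $a_i\notin\mathfrak{m}_i$ I conclude $a_j\in\mathfrak{m}_i$. Hence every factor of $b_i=\odot_{j\neq i}a_j$ lies in $\mathfrak{m}_i$, and as $\mathfrak{m}_i$ is a filter (closed under $\odot$), $b_i\in\mathfrak{m}_i$. For \ref{comincor3} I set $v_i=\vee_{j\neq i}b_j$. For $j\neq i$ the product $b_j$ has $a_i$ among its factors, so $b_j\leq a_i$; since $a_i\notin\mathfrak{m}_i$ and $\mathfrak{m}_i$ is upward closed, $b_j\notin\mathfrak{m}_i$. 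The $\vee$-closedness of $\mathfrak{m}_i^{c}$ then yields $v_i\notin\mathfrak{m}_i$, and Theorem \ref{mincor}\ref{mincor3} forces $\mathfrak{m}_i$ to contain $(F:v_i)$, i.e.\ $(F:\vee_{j\neq i}b_j)\subseteq\mathfrak{m}_i$.

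For \ref{comincor2} I would avoid a direct symmetric-product computation and argue through minimal primes. By Corollary \ref{mininters}\ref{mininters2} applied to the filter $F$ one has $F=\bigcap Min_{F}(\mathfrak{A})$, so it suffices to show $\vee_i b_i\in\mathfrak{n}$ for every $F$-minimal prime filter $\mathfrak{n}$. In such an $\mathfrak{n}$ at most one of the $a_j$ can lie outside: if $a_j,a_{j'}\notin\mathfrak{n}$ for some $j\neq j'$, then $\vee$-closedness of $\mathfrak{n}^{c}$ would give $a_j\vee a_{j'}\notin\mathfrak{n}$, contradicting $a_j\vee a_{j'}\in F\subseteq\mathfrak{n}$. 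Hence there is an index $i$ with $a_j\in\mathfrak{n}$ for all $j\neq i$, so $b_i=\odot_{j\neq i}a_j\in\mathfrak{n}$ and therefore $\vee_i b_i\in\mathfrak{n}$. As $\mathfrak{n}$ was arbitrary, $\vee_i b_i\in F$.

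The main obstacle is the first step: securing elements that are simultaneously outside their own $\mathfrak{m}_i$ and pairwise joined into $F$. The temptation to combine the pairwise blocks by $\odot$ fails, because products shrink, so $a_i\vee a_j$ would only be dominated by---rather than dominate---a member of $F$. The decisive choice is to combine by $\vee$ instead: joins preserve ``lying outside a prime filter'' through $\vee$-closedness of the complement, while making each $a_i$ large enough that $a_i\vee a_j$ sits above the pairwise witness $s_{ij}\vee s_{ji}\in F$. After that, \ref{comincor1}--\ref{comincor3} are short, the only mild subtlety being the passage to minimal primes used for \ref{comincor2}.
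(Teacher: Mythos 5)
Your proof is correct, but it departs from the paper's argument in two genuine ways. First, the construction of $a_1,\dots,a_n$: you build all of them simultaneously, harvesting for each unordered pair $\{i,k\}$ a witness pair $s_{ik}\notin\mathfrak{m}_i$, $s_{ki}\notin\mathfrak{m}_k$ with $s_{ik}\vee s_{ki}\in F$ (via incomparability of distinct minimal primes plus Theorem \ref{mincor}\ref{mincor3}), and then joining, $a_i=\vee_{k\neq i}s_{ik}$; the paper instead proceeds by induction on $n$, starting from the case $n=2$ and, at each step, perturbing the $n-1$ elements already obtained by joining them with a single product $y\in\mathfrak{m}_n\setminus\bigcup_{i<n}\mathfrak{m}_i$ and adjoining a witness $z$ for $y$ from Theorem \ref{mincor}. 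Your one-shot version is more symmetric and arguably easier to verify, at the cost of producing roughly $n^2$ auxiliary elements rather than extending an inductive datum. Second, and more substantively, assertion \ref{comincor2}: the paper proves $\vee_{i=1}^{n}b_i\in F$ by a purely equational computation, iterating \ref{res2} to get $\vee_{i=1}^{n}b_i\geq \odot\,(\vee_{i} a_{j_i})$, a product of joins each of which contains two distinct $a$'s and hence lies in $F$; you instead invoke Corollary \ref{mininters}\ref{mininters2} to write $F=\bigcap Min_{F}(\mathfrak{A})$ and check membership in each $F$-minimal prime $\mathfrak{n}$ by the ``at most one $a_j$ outside $\mathfrak{n}$'' observation. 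Your route is slicker and avoids the slightly delicate induction on \ref{res2}, but it re-enters the spectral machinery (Corollary \ref{mininters}, hence Theorem \ref{prfilth} and Zorn's lemma), whereas the paper's computation for this step is choice-free lattice arithmetic once the $a_i$ exist. Parts \ref{comincor1} and \ref{comincor3} in your write-up coincide with the paper's up to trivial rephrasing (you pass complements through joins where the paper passes joins through the order), so no issues there.
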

\begin{proof}
Let $n=2$. So there exist $x_1\in \mathfrak{m}_2-\mathfrak{m}_1$ and $x_2\in \mathfrak{m}_1-\mathfrak{m}_2$. Applying Theorem \ref{mincor} there exists $y_2\notin \mathfrak{m}_1$ such that $x_2\vee y_2\in F$. It follows that $a_1=x_1\vee y_2$ and $a_2=x_2$ establish the result. Suppose that the result holds for $n-1$ and $\mathfrak{m}_1,\cdots,\mathfrak{m}_n$ are distinct $F$-minimal prime filters. Let $x_1,\cdots,x_{n-1}$ are pairwise in $F$ and $x_i\notin \mathfrak{m}_i$ for $1\leq i\leq n-1$. Consider $y_i\in \mathfrak{m}_n-\mathfrak{m}_i$ for $1\leq i\leq n$. Let $y=\odot_{i=1}^{n}y_i$, hence $y\in \mathfrak{m}_n-(\bigcup_{i=1}^{n-1} \mathfrak{m}_i)$. By Theorem \ref{mincor} there exists $z\in \mathfrak{m}_n$ such that $y\vee z\in F$. It follows that $a_i=x_i\vee y$ ($i=1,\cdots,n-1$) and $a_n=z$ are the required elements.
\item [\ref{comincor1}:] Let $1\leq i\leq n$. For any $1\leq j\neq i\leq n$ we have $a_i\vee a_j\in F$ and $a_i\notin \mathfrak{m}_i$. So $a_j\in \mathfrak{m}_{i}$ and it shows that $b_i\in \mathfrak{m}_{i}$.
\item [\ref{comincor2}:] By a simple induction on \ref{res2} follows that $\vee_{i=1}^{n}b_i\geq \odot_{j_i\in \{1,\cdots,i-1,i+1,\cdots,n\}} (\vee_{i=1}^{n} a_{j_i})$. Since $a_1,\cdots,a_{n}$ are pairwise in $F$ so the result holds.
\item [\ref{comincor3}:] For any $j\neq i$ we have $b_j\leq a_i$. So if $\vee_{j=1\atop j\neq i}^{n}b_j\in \mathfrak{m}_i$, then $a_i\in \mathfrak{m}_i$; a contradiction. Hence, $\vee_{j=1\atop j\neq i}^{n}b_j\notin \mathfrak{m}_i$ and so the result establishes by Theorem \ref{mincor}\ref{mincor3}.
\end{proof}
\begin{proposition}\label{conolem}
Let $\mathfrak{A}$ be a residuated lattice and $F$ be a filter of $\mathfrak{A}$. The following assertions are equivalent:
\begin{enumerate}
\item  [$(1)$ \namedlabel{conolem1}{$(1)$}] For any $n+1$ distinct $F$-minimal prime filters $\mathfrak{m}_{0},\cdots,\mathfrak{m}_{n}$,
    \[\veebar_{i=1}^{n}\mathfrak{m}_{i}=A;\]
\item  [$(2)$ \namedlabel{conolem2}{$(2)$}] $\mathfrak{A}$ is $n$-normal with respect to $F$;
\item  [$(3)$ \namedlabel{conolem3}{$(3)$}] for any prime filter $P$ containing $F$, $D_{F}(P)$ is an $(n+1)$-prime filter;
\item  [$(4)$ \namedlabel{conolem4}{$(4)$}] for any $x_0,\cdots,x_n\in A$ which are pairwise in $F$,
\[\veebar_{i=0}^{n}(F:x_{i})=A;\]
\item  [$(5)$ \namedlabel{conolem5}{$(5)$}] for any $x_0,\cdots,x_n\in A$ which are pairwise in $F$, there exists $a_i\in (F:x_i)$ for any $0\leq i\leq n$ such that $\odot_{i=0}^{n} a_i=0$;
\item  [$(6)$ \namedlabel{conolem6}{$(6)$}] for any $x_0,\cdots,x_n\in A$, $(F:\vee_{i=0}^{n} x_{i})=\veebar_{i=0}^{n}(F:\vee_{j=0\atop i\neq j}^{n}x_{j})$;
\item  [$(7)$ \namedlabel{conolem7}{$(7)$}] for any $x_0,\cdots,x_n\in A$, $\vee_{i=0}^{n} x_{i}\in F$ implies $\veebar_{i=0}^{n}(F:\vee_{j=0\atop i\neq j}^{n}x_{j})=A$;
\end{enumerate}
\end{proposition}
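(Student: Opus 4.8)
The plan is to establish all seven equivalences through a connected web of implications anchored at the minimal-prime description of $n$-normality. I would first settle the three ``structural'' conditions (1), (2), (3). For (2)$\Leftrightarrow$(3), recall from Corollary \ref{dfmpr} that $D_{F}(P)=\bigcap\{\mathfrak{m}\in Min_{F}(\mathfrak{A}):\mathfrak{m}\subseteq P\}$, so $D_F(P)$ is exactly the intersection of the $F$-minimal prime filters below $P$. Hence $P$ contains at most $n$ of them precisely when $D_F(P)$ is an intersection of at most $n$ distinct prime filters, which by Lemma \ref{npri}\ref{npri4} (applied with parameter $n+1$) says that $D_F(P)$ is $(n+1)$-prime; the reverse count uses Lemma \ref{comincor}, since $n+1$ distinct minimal primes below $P$ yield elements $a_i$ pairwise in $F\subseteq D_F(P)$ with $a_i\notin\mathfrak{m}_i\supseteq D_F(P)$, contradicting the $(n+1)$-prime condition for $D_F(P)$. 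For (1)$\Leftrightarrow$(2) I pass through a prime filter: a proper join $\veebar_i\mathfrak{m}_i$ lies in some prime $P\supseteq F$ that then contains all the $\mathfrak{m}_i$, contradicting $n$-normality, and conversely a prime witnessing non-normality bounds the join of its $n+1$ minimal primes away from $A$.

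Next I tie in the coannulet conditions. The equivalence (4)$\Leftrightarrow$(5) is routine: $\veebar_i(F:x_i)=A$ means $0$ lies in the generated filter, hence $0$ dominates a finite $\odot$-product of elements drawn from the $(F:x_i)$, and regrouping these factors by index (using $1\in(F:x_i)$ and closure under $\odot$) produces $a_i\in(F:x_i)$ with $\odot_i a_i=0$; the converse is immediate since $0\in\veebar_i(F:x_i)$ forces it to equal $A$. For (2)$\Rightarrow$(4) I argue contrapositively inside a prime: if $\veebar_i(F:x_i)$ were proper for pairwise-in-$F$ elements, enlarge it to a prime filter $P$; each $(F:x_i)\subseteq P$ gives $x_i\notin D_F(P)$, so Corollary \ref{dfmpr} supplies a minimal prime $\mathfrak{m}_i\subseteq P$ with $x_i\notin\mathfrak{m}_i$, and primeness of $\mathfrak{m}_j$ applied to $x_i\vee x_j\in F$ forces $x_i\in\mathfrak{m}_j$ for $j\neq i$, so the $\mathfrak{m}_i$ are pairwise distinct, giving $n+1$ minimal primes under $P$ against (2). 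The implication (4)$\Rightarrow$(7) is then easy, because when $\vee_i x_i\in F$ the ``leave-one-out'' joins $y_i=\vee_{j\neq i}x_j$ satisfy $y_i\vee y_k=\vee_i x_i\in F$, hence are pairwise in $F$, and (4) applied to them is exactly the conclusion of (7).

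To close the cycle I prove (7)$\Rightarrow$(2) using Lemma \ref{comincor}, which is built for this purpose: from $n+1$ distinct minimal primes under a common prime $P$ it produces $b_i$ with $\vee_i b_i\in F$ and $(F:\vee_{j\neq i}b_j)\subseteq\mathfrak{m}_i\subseteq P$, so $\veebar_i(F:\vee_{j\neq i}b_j)\subseteq P\subsetneq A$, contradicting the instance of (7) at the $b_i$. Finally I integrate the distributive identity (6) through the trivial implication (6)$\Rightarrow$(7) (taking $\vee_i x_i\in F$ makes the left side $(F:\vee_i x_i)=A$ by Proposition \ref{1fxpro}\ref{1fxpro2}) and the substantive implication (2)$\Rightarrow$(6). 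For the latter the only nontrivial inclusion is $(F:\vee_i x_i)\subseteq\veebar_i(F:\vee_{j\neq i}x_j)$; if it fails, choose $a$ in the left but not the right, extend the right-hand filter to a prime $P$ with $a\notin P$ (Corollary \ref{intprimfilt}), and note $a\vee\vee_i x_i\in F\subseteq P$. Each inclusion $(F:\vee_{j\neq i}x_j)\subseteq P$ yields via Corollary \ref{dfmpr} a minimal prime $\mathfrak{m}_i\subseteq P$ avoiding every $x_j$ with $j\neq i$; applying primeness to $a\vee\vee_i x_i\in\mathfrak{m}_i$ with $a\notin P\supseteq\mathfrak{m}_i$ forces $x_i\in\mathfrak{m}_i$, so the $\mathfrak{m}_i$ are again $n+1$ distinct minimal primes under $P$, contradicting (2).

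I expect the main obstacle to be the two prime-extension arguments (2)$\Rightarrow$(4) and (2)$\Rightarrow$(6), where the real content is the bookkeeping that produces $n+1$ \emph{pairwise distinct} minimal prime filters below a single prime: distinctness rests on reading off ``$x_i\in\mathfrak{m}_i$ but $x_i\notin\mathfrak{m}_j$ for $j\neq i$'' carefully from primeness and the pairwise-in-$F$ hypotheses. The dual difficulty appears in (7)$\Rightarrow$(2), but there Lemma \ref{comincor} already packages the delicate construction, so once it is in hand that implication is short; by contrast, in (2)$\Rightarrow$(6) the presence of the auxiliary element $a$ (and the need to keep it outside $P$) is what makes the minimal-prime count subtle.
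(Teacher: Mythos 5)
Your proof is correct, and all seven conditions are indeed linked by your web of implications, but the route is genuinely different from the paper's. The paper proves the single cycle \ref{conolem1}$\Rightarrow$\ref{conolem2}$\Rightarrow$\ref{conolem3}$\Rightarrow$\ref{conolem4}$\Rightarrow$\ref{conolem5}$\Rightarrow$\ref{conolem6}$\Rightarrow$\ref{conolem7}$\Rightarrow$\ref{conolem1}; you instead make \ref{conolem2} a hub, closing the cycle \ref{conolem2}$\Rightarrow$\ref{conolem4}$\Rightarrow$\ref{conolem7}$\Rightarrow$\ref{conolem2} and hanging \ref{conolem1}, \ref{conolem3}, \ref{conolem5}, \ref{conolem6} off it. Two of your steps have no counterpart in the paper. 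First, your \ref{conolem4}$\Rightarrow$\ref{conolem7} shortcut via the leave-one-out joins $y_i=\vee_{j\neq i}x_j$ (which are pairwise in $F$ as soon as $\vee_{i}x_i\in F$) lets you bypass \ref{conolem5} and \ref{conolem6} entirely in the main cycle, whereas the paper reaches \ref{conolem7} only through \ref{conolem6}. Second, and more substantively, where the paper proves \ref{conolem5}$\Rightarrow$\ref{conolem6} by an element computation --- put $b_i=a\vee(\vee_{j\neq i}x_j)$, extract witnesses $a_i\in(F:b_i)$ with $\odot_i a_i=0$, and use \ref{res2} together with \textsc{Remark} \ref{genfilprop}\ref{genfilprop1} to conclude $a\geq \odot_i(a\vee a_i)$ lies in $\veebar_i(F:\vee_{j\neq i}x_j)$ --- you prove \ref{conolem2}$\Rightarrow$\ref{conolem6} by prime separation: extend the right-hand filter to a prime missing $a$ and manufacture $n+1$ distinct $F$-minimal primes below it from Proposition \ref{lemdivi}\ref{lemdivi1} and Corollary \ref{dfmpr} (your distinctness bookkeeping there, $x_i\in\mathfrak{m}_i$ but $x_i\notin\mathfrak{m}_j$, checks out). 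What your approach buys is uniformity: essentially every implication reduces to counting $F$-minimal primes below a prime, with Corollary \ref{dfmpr} and Lemma \ref{comincor} doing all the work, which makes the spectral content of the proposition transparent. What it costs is that you invoke Zorn-type prime-extension arguments (Theorem \ref{prfilth}, Corollary \ref{intprimfilt}) at several points where the paper's equational step produces explicit witnesses, and you prove more implications than a minimal cycle needs (both directions of \ref{conolem1}$\Leftrightarrow$\ref{conolem2} and of \ref{conolem2}$\Leftrightarrow$\ref{conolem3}, the latter backward direction via Lemma \ref{comincor}); the paper gets those directions for free from the cycle. Both arguments are sound; yours is arguably more conceptual, the paper's more economical and more constructive at the \ref{conolem5}$\Rightarrow$\ref{conolem6} step.
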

\begin{proof}
\ref{conolem1}$\Rightarrow$\ref{conolem2} is trivial and \ref{conolem2}$\Rightarrow$\ref{conolem3}
  is a direct consequence of Corollary \ref{dfmpr} and Lemma \ref{npri}\ref{npri4}.
\item [] \ref{conolem3}$\Rightarrow$\ref{conolem4}: Let $x_1,\cdots,x_n\in A$ are pairwise in $F$. If $\veebar_{i=1}^{n}(F:x_{i})\neq A$, then there exists a prime filter $P$ containing $\veebar_{i=1}^{n}(F:x_{i})$. So by Theorem \ref{mincor}\ref{mincor3} follows that $x_1,\cdots,x_n\notin D_{F}(P)$ and it leads us to a contradiction following by Lemma \ref{npri}\ref{npri3}.
\item [] \ref{conolem4}$\Rightarrow$\ref{conolem5}: Let $x_1,\cdots,x_n\in A$ are pairwise in $F$. Hence $\veebar_{i=1}^{n}(F:x_{i})=A$ and so by \textsc{Remark} \ref{genfilprop}\ref{genfilprop1} there exist $a_i\in (F:x_i)$ for any $1\leq i\leq n$ such that $\odot_{i=1}^{n} a_i=0$.
\item [] \ref{conolem5}$\Rightarrow$\ref{conolem6}: Let $a\in (F:\vee_{i=1}^{n} x_{i})$. Let $b_{i}=a\vee(\vee_{j\neq i} x_j)$. Obviously, $b_1,\cdots,b_n$ are pairwise in $F$. So there exits $a_i\in (F:b_i)$ for any $1\leq i\leq n$ such that $\odot_{i=1}^{n} a_i=0$. By \ref{res2} follows that $a=a\vee \odot_{i=1}^{n} a_i\geq \odot_{i=1}^{n} (a\vee a_i)$. On the other hand, $a\vee a_i\in (F:\vee_{j=0\atop i\neq j}^{n}x_{j})$ for any $1\leq i\leq n$. By \textsc{Remark} \ref{genfilprop}\ref{genfilprop1} follows that $a\in \veebar_{i=0}^{n}(F:\vee_{j=0\atop i\neq j}^{n}x_{j})$. The other inclusion follows by \textsc{Remark} \ref{4fxpro}\ref{4fxpro4}.
\item [] \ref{conolem6}$\Rightarrow$\ref{conolem7}: It is trivial.
\item [] \ref{conolem7}$\Rightarrow$\ref{conolem1}: Let $\mathfrak{m}_{0},\cdots,\mathfrak{m}_{n}$ be distinct $F$-minimal prime filters. By Lemma \ref{comincor}\ref{comincor2} follows that $\vee_{i=1}^{n}b_i\in F$ and so $\veebar_{i=0}^{n}(F:\vee_{j=0\atop i\neq j}^{n}b_{j})=A$. Also, by Lemma \ref{comincor}\ref{comincor3}, $(F:\vee_{j=0\atop i\neq j}^{n}b_{j})\subseteq \mathfrak{m}_{i}$ for any $0\leq i\leq n$. so $\veebar_{i=1}^{n}\mathfrak{m}_{i}=A$.
\end{proof}
\begin{corollary}\label{noco}
Let $\mathfrak{A}$ be a residuated lattice. The following assertions are equivalent:
\begin{enumerate}
\item  [$(1)$ \namedlabel{noco1}{$(1)$}] Any two distinct minimal prime filters are comaximal;
\item  [$(2)$ \namedlabel{noco2}{$(2)$}] $\mathfrak{A}$ is normal;
\item  [$(3)$ \namedlabel{noco3}{$(3)$}] for any prime filter $P$, $D(P)$ is prime;
\item  [$(4)$ \namedlabel{noco4}{$(4)$}] for any $x,y\in A$, $x\vee y=1$ implies $x^{\perp}\veebar y^{\perp}=A$;
\item  [$(5)$ \namedlabel{noco5}{$(5)$}] for any $x,y\in A$, $x\vee y=1$ implies that there exist $u\in x^{\perp}$ and $v\in y^{\perp}$ such that $u\odot v=0$;
\item  [$(6)$ \namedlabel{noco6}{$(6)$}] for any $x,y\in A$, $(x\vee y)^{\perp}=x^{\perp}\veebar y^{\perp}$;
\item  [$(7)$ \namedlabel{noco7}{$(7)$}] for any $x,y\in A$, $(x\vee y)^{\perp}=A$ implies $x^{\perp}\veebar y^{\perp}=A$;
\end{enumerate}
\end{corollary}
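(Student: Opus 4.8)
The plan is to derive this corollary as the specialization of Proposition \ref{conolem} to $n=1$ and $F=\{1\}$, so that essentially nothing new has to be proved; the work is entirely a matter of translating the seven conditions of that proposition into the notation of the present statement and confirming that the translation is exact. First I would set up the dictionary. By the convention fixed just after the definition of the coannihilator, $(\{1\}:x)$ is written $x^{\perp}$; the phrase ``pairwise in $\{1\}$'' unwinds to $x\vee y=1$; a $\{1\}$-minimal prime filter is by definition a minimal prime filter; being $1$-normal with respect to $\{1\}$ is, by Definition \ref{conordef}, the same as being normal; and two filters are comaximal exactly when their join $\veebar$ equals $A$. Under this dictionary, conditions \ref{conolem1}, \ref{conolem2}, \ref{conolem4}, \ref{conolem5} and \ref{conolem6} turn (on writing $x_0=x$ and $x_1=y$) into \ref{noco1}, \ref{noco2}, \ref{noco4}, \ref{noco5} and \ref{noco6} respectively, so for these the equivalence transfers immediately.

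Two items need a one-line remark each. For \ref{noco3}, Proposition \ref{conolem}\ref{conolem3} yields that $D(P)$ is a $2$-prime filter, and I would observe that a $2$-prime filter is, by Lemma \ref{npri}\ref{npri4} with $n=2$, an intersection of at most one prime filter; since $\{1\}\subseteq P$ always holds, Proposition \ref{lemdivi}\ref{lemdivi2} gives $D(P)\neq A$, so $D(P)$ is proper and is therefore exactly a prime filter, while conversely any prime filter is $2$-prime. Hence ``$D(P)$ is $2$-prime'' and ``$D(P)$ is prime'' express the same property. For \ref{noco7}, the hypothesis $x\vee y\in\{1\}$ appearing in Proposition \ref{conolem}\ref{conolem7} is, by Proposition \ref{1fxpro}\ref{1fxpro2}, equivalent to $(x\vee y)^{\perp}=A$, which is precisely the hypothesis stated in \ref{noco7}.

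With these identifications the equivalences among \ref{noco1}--\ref{noco7} follow at once from those among \ref{conolem1}--\ref{conolem7}. I expect the only point requiring care to be the bookkeeping for \ref{noco3}, namely recognizing that a proper $2$-prime filter is nothing but a prime filter; but this is settled entirely by Lemma \ref{npri} together with the properness of $D(P)$, so no genuine obstacle remains and no fresh computation is needed.
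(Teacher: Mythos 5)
Your proposal is correct and takes exactly the paper's route: the paper proves Corollary \ref{noco} in one line by setting $F=\{1\}$ (and $n=1$) in Proposition \ref{conolem}. Your additional bookkeeping --- identifying a proper $2$-prime filter with a prime filter via Lemma \ref{npri}\ref{npri4} and properness of $D(P)$, and translating the hypothesis of \ref{noco7} via Proposition \ref{1fxpro}\ref{1fxpro2} --- simply makes explicit the details the paper leaves to the reader.
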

\begin{proof}
It follows by taking $F=\{1\}$ in Proposition \ref{conolem}.
\end{proof}

\begin{proposition}\label{osublatfil}
Let $\mathfrak{A}$ be a residuated lattice. The following assertions are equivalent:
\begin{enumerate}
\item  [$(1)$ \namedlabel{osublatfil1}{$(1)$}] for any $F,G\in \Omega(\mathfrak{A})$, $F\vee^{\omega} G=A$ implies $F\veebar G=A$;
\item  [$(2)$ \namedlabel{osublatfil2}{$(2)$}] $\mathfrak{A}$ is normal;
\item  [$(3)$ \namedlabel{osublatfil3}{$(3)$}] for any $\mathcal{F}\subseteq \Omega(\mathfrak{A})$ we have $\veebar \mathcal{F}\in \Omega(\mathfrak{A})$;
\item  [$(4)$ \namedlabel{osublatfil4}{$(4)$}] $\Omega(\mathfrak{A})$ is a sublattice of $\mathscr{F}(\mathfrak{A})$;
\item  [$(5)$ \namedlabel{osublatfil5}{$(5)$}] $\gamma(\mathfrak{A})$ is a sublattice of $\mathscr{F}(\mathfrak{A})$;
\end{enumerate}
\end{proposition}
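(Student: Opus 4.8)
The plan is to prove the cycle $\ref{osublatfil1}\Rightarrow\ref{osublatfil2}\Rightarrow\ref{osublatfil3}\Rightarrow\ref{osublatfil4}\Rightarrow\ref{osublatfil5}\Rightarrow\ref{osublatfil1}$, using the characterizations of normality already packaged in Corollary \ref{noco}. The standing facts I would lean on are: every coannulet is an $\omega$-filter (Lemma \ref{gammaomega}); $(\Omega(\mathfrak{A});\cap,\vee^{\omega})$ is a lattice with $F\cap G=\omega(I_F\cap I_G)$ and $F\vee^{\omega}G=\omega(I_F\curlyvee I_G)$ (Proposition \ref{dislattoff}); on coannulets the join collapses to $x^{\perp}\vee^{\omega}y^{\perp}=(x\vee y)^{\perp}$ (Proposition \ref{30fxpro}); and $x^{\perp}\cap y^{\perp}=(x\odot y)^{\perp}$ (Proposition \ref{4fxpro}\ref{4fxpro2}). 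A recurring elementary remark is that $\veebar\mathcal{F}\subseteq F_{0}$ whenever $F_{0}$ is an $\omega$-filter containing $\cup\mathcal{F}$, since any $\omega$-filter is in particular a filter; in particular the filter-join $\veebar$ is always contained in the join computed inside $\Omega(\mathfrak{A})$.

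Four of the five implications are short. For $\ref{osublatfil1}\Rightarrow\ref{osublatfil2}$ I check condition \ref{noco}\ref{noco4}: if $x\vee y=1$ then $x^{\perp}\vee^{\omega}y^{\perp}=A$ by Corollary \ref{lastcor} taken at $F=\{1\}$, and since $x^{\perp},y^{\perp}\in\Omega(\mathfrak{A})$, hypothesis \ref{osublatfil1} promotes this to $x^{\perp}\veebar y^{\perp}=A$, which is exactly \ref{noco}\ref{noco4}. The step $\ref{osublatfil3}\Rightarrow\ref{osublatfil4}$ is immediate: applying \ref{osublatfil3} to $\{F,G\}$ gives $F\veebar G\in\Omega(\mathfrak{A})$, while $F\cap G=\omega(I_F\cap I_G)\in\Omega(\mathfrak{A})$ always, so $\Omega(\mathfrak{A})$ is closed under both operations of $\mathscr{F}(\mathfrak{A})$. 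For $\ref{osublatfil4}\Rightarrow\ref{osublatfil5}$, once $F\veebar G\in\Omega(\mathfrak{A})$, the containment $F\veebar G\subseteq F\vee^{\omega}G$ together with minimality of $\vee^{\omega}$ among $\omega$-filters forces $F\veebar G=F\vee^{\omega}G$; specialising to coannulets gives $x^{\perp}\veebar y^{\perp}=(x\vee y)^{\perp}\in\gamma(\mathfrak{A})$ and $x^{\perp}\cap y^{\perp}=(x\odot y)^{\perp}\in\gamma(\mathfrak{A})$, so $\gamma(\mathfrak{A})$ is a sublattice of $\mathscr{F}(\mathfrak{A})$. Finally $\ref{osublatfil5}\Rightarrow\ref{osublatfil1}$: writing $F=\omega(I_F)$, $G=\omega(I_G)$, the hypothesis $F\vee^{\omega}G=A$ means $1\in I_F\curlyvee I_G$ by Proposition \ref{1propdiv}\ref{1propdiv6}, so $x\vee y=1$ for some $x\in I_F$, $y\in I_G$; then \ref{osublatfil5} gives $A=1^{\perp}=(x\vee y)^{\perp}=x^{\perp}\veebar y^{\perp}$, and as $x^{\perp}\subseteq\omega(I_F)=F$ and $y^{\perp}\subseteq\omega(I_G)=G$ we conclude $A=x^{\perp}\veebar y^{\perp}\subseteq F\veebar G$.

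The main obstacle is $\ref{osublatfil2}\Rightarrow\ref{osublatfil3}$, the only step that must handle arbitrary families. Here I would show that for $\mathcal{F}\subseteq\Omega(\mathfrak{A})$, writing $F=\omega(I_F)$ and $J=\curlyvee_{F\in\mathcal{F}}I_F$, one has $\veebar\mathcal{F}=\omega(J)$, which is an $\omega$-filter by definition. The inclusion $\veebar\mathcal{F}\subseteq\omega(J)$ needs no hypothesis, since each $I_F\subseteq J$ gives $F=\omega(I_F)\subseteq\omega(J)$ by monotonicity (Proposition \ref{1propdiv}\ref{1propdiv4}) and $\omega(J)$ is a filter by Proposition \ref{2propdiv} ($J$ being $\vee$-closed). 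Normality enters in the reverse inclusion. Given $a\in\omega(J)$, choose $w\in J$ with $w\vee a=1$; by the description of the generated ideal (Remark \ref{idealremark}) together with $\vee$-closedness of each $I_F$, one reduces to $w\leq x_1\vee\cdots\vee x_k$ with $x_i\in I_{F_i}$, whence $a\in(x_1\vee\cdots\vee x_k)^{\perp}$. The crux is to distribute the coannulet of a finite join across $\veebar$: iterating the normality identity \ref{noco}\ref{noco6} yields $(x_1\vee\cdots\vee x_k)^{\perp}=x_1^{\perp}\veebar\cdots\veebar x_k^{\perp}$, and since $x_i^{\perp}\subseteq\omega(I_{F_i})=F_i$ this filter sits inside $\veebar\mathcal{F}$, giving $a\in\veebar\mathcal{F}$. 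The two delicate points to get right are the finiteness reduction for membership in $J$ and the clean induction upgrading \ref{noco}\ref{noco6} from binary to $k$-ary joins; everything else is routine bookkeeping, reflecting the fact that $\ref{osublatfil2}$ and $\ref{osublatfil5}$ are just two readings of Corollary \ref{noco}\ref{noco6}.
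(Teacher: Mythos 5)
Your proof is correct and follows essentially the same route as the paper: the same cycle $\ref{osublatfil1}\Rightarrow\ref{osublatfil2}\Rightarrow\ref{osublatfil3}\Rightarrow\ref{osublatfil4}\Rightarrow\ref{osublatfil5}\Rightarrow\ref{osublatfil1}$, with the key step $\ref{osublatfil2}\Rightarrow\ref{osublatfil3}$ argued exactly as in the paper by showing $\veebar\mathcal{F}=\omega\bigl(\curlyvee_{F\in\mathcal{F}}I_F\bigr)$ via the finiteness of ideal joins and iterated use of Corollary \ref{noco}\ref{noco6}. The only differences are cosmetic (citing Corollary \ref{lastcor} instead of Proposition \ref{30fxpro} in $\ref{osublatfil1}\Rightarrow\ref{osublatfil2}$, and spelling out the step $\ref{osublatfil4}\Rightarrow\ref{osublatfil5}$ that the paper calls trivial).
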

\begin{proof}
\item  []\ref{osublatfil1}$\Rightarrow$\ref{osublatfil2}: Let $x\vee y=1$ for some $x,y\in A$. Since $\gamma(\mathfrak{A})$ is a sublattice of $\Omega(\mathfrak{A})$ so we have $x^{\perp}\vee^{\omega} y^{\perp}=x^{\perp}\vee^{\Gamma} y^{\perp}=(x\vee y)^{\perp}=A$. Thus $\mathfrak{A}$ is normal due to Corollary \ref{noco}\ref{noco4}.
\item  []\ref{osublatfil2}$\Rightarrow$\ref{osublatfil3}: Let $\{F_i\}_{i\in I}$ be a family of $\omega$-filters and let for any $i\in I$, $I_i$ be a lattice ideal such that $F_i=\omega(I_i)$. By Proposition \ref{1propdiv}\ref{1propdiv4} follows that for any $i\in I$ we have $F_i\subseteq \omega(\curlyvee_{i\in I}I_i)$ and it states that $\veebar_{i\in I} F_i\subseteq \omega(\curlyvee_{i\in I}I_i)$ since $\omega(\curlyvee_{i\in I}I_i)$ is a filter. Let $a\in \omega(\curlyvee_{i\in I}I_i)$. Hence, there exists $x\in \curlyvee_{i\in I}I_i$ such that $a\in x^{\perp}$. It implies that $x\leq x_{i_1}\vee\cdots\vee x_{i_n}$ for some integer $n$ and $x_{i_j}\in I_{i_j}$. So by Proposition \ref{4fxpro}\ref{4fxpro1} and Corollary \ref{noco}\ref{noco6} we have the following sequence of formulas:
     \[
     \begin{array}{ll}
        x^{\perp}&\subseteq (x_{i_1}\vee\cdots\vee x_{i_n})^{\perp} \\
        &=x^{\perp}_{i_1}\veebar\cdots\veebar x^{\perp}_{i_n} \\
        &\subseteq  F_{i_1}\veebar\cdots\veebar F_{i_n}\\
        &\subseteq \veebar_{i\in I} F_i.
     \end{array}
     \]
It shows that $\veebar_{i\in I} F_i=\omega(\curlyvee_{i\in I}I_i)$.
\item  []\ref{osublatfil3}$\Rightarrow$\ref{osublatfil4}: Let $F,G\in \Omega(\mathfrak{A})$. By Proposition \ref{dislattoff} we have $F\veebar G\subseteq F\vee^{\omega} G$ and by \ref{osublatfil3} we have $F\vee^{\omega} G\subseteq F\veebar G$. It holds the result.
\item  []\ref{osublatfil4}$\Rightarrow$\ref{osublatfil5}: It is trivial.
\item  []\ref{osublatfil5}$\Rightarrow$\ref{osublatfil1}: Let $F,G\in \Omega(\mathfrak{A})$ such that $F\vee^{\omega} G=A$. Since $\omega(I_F\curlyvee I_G)=A$, so by Proposition \ref{1propdiv}\ref{1propdiv6} follows that $1\in I_F\curlyvee I_G$ and it states that $f\vee g=1$ for some $f\in I_F$ and $g\in I_G$. Hence, $A=(f\vee g)^{\perp}=f^{\perp}\vee^{\Gamma} g^{\perp}=f^{\perp}\veebar g^{\perp}\subseteq F\veebar G$.
\end{proof}

In light of above corollary, we obtain the existence of the greatest $\omega$-filters contained in a given filter of a normal residuated lattice.
\begin{proposition}
Let $\mathfrak{A}$ be a normal residuated lattice. Then for any filter $F$ there exists a largest $\omega$-filter contained in $F$.
\end{proposition}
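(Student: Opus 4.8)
The plan is to realize the desired filter as the join of \emph{all} $\omega$-filters that sit below $F$, and to let normality do the work of showing that this join is itself an $\omega$-filter. Concretely, I would set
\[
\mathcal{G}=\{H\in \Omega(\mathfrak{A})\mid H\subseteq F\}
\]
and put $K=\veebar \mathcal{G}$. This family is nonempty, since $\{1\}=\omega(\mathscr{I}(0))$ is an $\omega$-filter contained in every filter; so $K$ is a genuine join in $\mathscr{F}(\mathfrak{A})$.

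The decisive step is to argue $K\in \Omega(\mathfrak{A})$, and this is exactly where the hypothesis is used. Since $\mathfrak{A}$ is normal, the equivalence $\ref{osublatfil2}\Leftrightarrow\ref{osublatfil3}$ of Proposition \ref{osublatfil} shows that $\Omega(\mathfrak{A})$ is closed under arbitrary joins $\veebar$; applying this to the subfamily $\mathcal{G}\subseteq \Omega(\mathfrak{A})$ gives $K=\veebar \mathcal{G}\in \Omega(\mathfrak{A})$. Next I would check $K\subseteq F$: every $H\in \mathcal{G}$ satisfies $H\subseteq F$, hence $\cup \mathcal{G}\subseteq F$, and since $F$ is a filter we obtain $K=\mathscr{F}(\cup \mathcal{G})\subseteq F$. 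Thus $K$ is an $\omega$-filter with $K\subseteq F$, i.e. $K\in \mathcal{G}$.

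Finally, $K$ contains every member of $\mathcal{G}$ by construction, so any $\omega$-filter $H$ with $H\subseteq F$ satisfies $H\subseteq K$; together with $K\in \mathcal{G}$ this exhibits $K$ as the largest $\omega$-filter contained in $F$. I expect no real obstacle, as the whole argument reduces to the closure of $\Omega(\mathfrak{A})$ under arbitrary joins already recorded in Proposition \ref{osublatfil}. The only point meriting a moment's care is that an \emph{arbitrary} (possibly infinite) family is involved, but \ref{osublatfil3} is stated for every subfamily $\mathcal{F}\subseteq \Omega(\mathfrak{A})$, so the infinite case is covered directly.
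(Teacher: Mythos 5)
Your proposal is correct and follows essentially the same route as the paper: both take the join $\veebar$ of the family of all $\omega$-filters contained in $F$ and invoke the normality characterization (closure of $\Omega(\mathfrak{A})$ under arbitrary joins, assertion \ref{osublatfil3} of Proposition \ref{osublatfil}) to conclude that this join is itself an $\omega$-filter, hence the largest one below $F$. The only difference is that you spell out the details the paper leaves as ``obviously'' (nonemptiness of the family, the containment $\veebar\mathcal{G}\subseteq F$, and maximality), which is fine.
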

\begin{proof}
Let $F$ be a filter and $\mathcal{F}$ be the family of $\omega$-filters of $\mathfrak{A}$ contained in $F$. By Corollary \ref{osublatfil}\ref{osublatfil3}, $\veebar \mathcal{F}$ is an $\omega$-filter and obviously it is the largest $\omega$-filter contained in $F$.
\end{proof}
\begin{definition}
Let $\mathfrak{A}$ be a residuated lattice. For any filter $F$ of $\mathfrak{A}$ we set
\[\sigma(F)=\{a\in A|a^{\perp}\veebar F=A\}.\]
\end{definition}
\begin{proposition}\label{sigmaomegafil}
Let $\mathfrak{A}$ be a residuated lattice and $F$ be a filter of $\mathfrak{A}$. Then $\sigma(F)$ is an $\omega$-filter contained in $F$.
\end{proposition}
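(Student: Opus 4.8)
The plan is to realise $\sigma(F)$ as $\omega(I)$ for a suitably chosen lattice ideal $I$, which at once exhibits $\sigma(F)$ as a member of $\Omega(\mathfrak{A})$, and then to verify the inclusion $\sigma(F)\subseteq F$ directly. First I would rewrite the defining condition at the level of elements. Since $a^{\perp}$ and $F$ are filters, the join $a^{\perp}\veebar F$ is described (as in \textsc{Remark} \ref{genfilprop}\ref{genfilprop1}) by $a^{\perp}\veebar F=\{c\in A\mid u\odot f\leq c \text{ for some } u\in a^{\perp},\ f\in F\}$; hence $a^{\perp}\veebar F=A$ holds precisely when $u\odot f=0$ for some $u\in a^{\perp}$ and $f\in F$. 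As $u\odot f=0$ is equivalent to $f\leq\neg u$, and $F$ is upward closed, this condition is in turn equivalent to $\neg u\in F$. Recalling that $u\in a^{\perp}\iff a\vee u=1$, I obtain the reformulation
\[\sigma(F)=\{a\in A\mid a\vee u=1 \text{ and } \neg u\in F \text{ for some } u\in A\}.\]

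Next I would introduce $I=\{x\in A\mid \neg x\in F\}$ and check that it is an ideal of $\ell(\mathfrak{A})$. Downward closure follows from the antitonicity of $\neg$: if $x\leq y$ and $\neg y\in F$, then $\neg y\leq\neg x$, so $\neg x\in F$. For closure under $\vee$, I would use the identity $\neg(x\vee y)=\neg x\wedge\neg y$ together with the fact that a filter is closed under $\wedge$ (because $x\wedge y\geq x\odot y$); thus $\neg x,\neg y\in F$ forces $\neg(x\vee y)\in F$. With this ideal in hand, Proposition \ref{1propdiv}\ref{1propdiv2} gives $\omega(I)=\omega_{\{1\}}(I)=\{a\in A\mid a\vee x=1 \text{ for some } x\in I\}$, which by the reformulation above is exactly $\sigma(F)$. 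Since $I$ is an ideal, $\omega(I)$ is a filter by Proposition \ref{2propdiv} and is an $\omega$-filter by Definition \ref{omefildef}; therefore $\sigma(F)\in\Omega(\mathfrak{A})$.

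It then remains to show $\sigma(F)\subseteq F$. Given $a\in\sigma(F)=\omega(I)$, fix $x\in I$ with $a\vee x=1$. Since $x\odot\neg x=0$, the inequality \ref{res2} yields $a=a\vee(x\odot\neg x)\geq(a\vee x)\odot(a\vee\neg x)=a\vee\neg x\geq\neg x$; as $\neg x\in F$ and $F$ is upward closed, $a\in F$.

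The main obstacle is the element-level reduction carried out in the first step, namely converting the filter-theoretic equation $a^{\perp}\veebar F=A$ into the single-element statement $\neg u\in F$ via the product description of a join of filters; once this is in place, identifying the ideal $I=\{x\mid\neg x\in F\}$ and running the \ref{res2}-computation for the inclusion are routine.
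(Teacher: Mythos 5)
Your proposal is correct, but it reaches the conclusion by a genuinely different route than the paper. The paper realises $\sigma(F)$ as $\omega(I_F)$ for the ideal $I_F=\{a\in A\mid a^{\perp\perp}\veebar F=A\}$ built from double coannulets: $\vee$-closure of $I_F$ comes from Proposition \ref{4fxpro}\ref{4fxpro3} together with distributivity of $\mathscr{F}(\mathfrak{A})$, downward closure from Proposition \ref{1fxpro}\ref{1fxpro1} and Proposition \ref{4fxpro}\ref{4fxpro1}, the identification $\sigma(F)=\omega(I_F)$ again from coannihilator inclusions, and the final containment $\sigma(F)\subseteq F$ from \ref{res1} via $f=f\odot(a\vee x)=(f\odot a)\vee(f\odot x)=f\odot a\leq a$. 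You instead push everything to the element level through residuation: the equivalence $u\odot f=0\iff f\leq\neg u$ converts $a^{\perp}\veebar F=A$ into ``$a\vee u=1$ for some $u$ with $\neg u\in F$'', so your witnessing ideal is $I=\{x\in A\mid\neg x\in F\}$, whose ideal axioms follow from antitonicity of $\neg$ and the law $\neg(x\vee y)=\neg x\wedge\neg y$ (you should note that this identity, while standard, is not recorded in the paper; it follows from \ref{res1} and adjointness), and you close with \ref{res2} in place of \ref{res1}. Note that your ideal need not coincide with the paper's: one always has $I\subseteq I_F$, possibly properly, but both have image $\sigma(F)$ under $\omega$, which is all Definition \ref{omefildef} requires. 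What your approach buys is a more transparent description of $\sigma(F)$ as $\omega$ of the $\neg$-preimage of $F$, using only the monoid--residuum interplay; what the paper's approach buys is that it never leaves the coannihilator calculus of Section \ref{sec2}, never mentions $\neg$, and so reads uniformly with the surrounding results of Section \ref{sec4}.
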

\begin{proof}
Let $I_F=\{a\in A|a^{\perp\perp}\veebar F=A\}$. Let $a,b\in I_F$. By Proposition \ref{4fxpro}\ref{4fxpro3} and distributivity of $\mathscr{F}(\mathfrak{A})$ follows that $(a\vee b)^{\perp\perp}\veebar F=(a^{\perp\perp}\cap b^{\perp\perp})\veebar F=(a^{\perp\perp}\cap F)\veebar (a^{\perp\perp}\cap F)=A$. So $I_F$ is a $\vee$-closed subset of $\mathfrak{A}$. Now, let $a\leq b$ and $b\in I_F$. By Proposition \ref{1fxpro}\ref{1fxpro1} and Proposition \ref{4fxpro}\ref{4fxpro1} follows that $b^{\perp\perp}\subseteq a^{\perp\perp}$ and it implies that $a\in I_F$. Thus $I_F$ is an ideal and so $\omega(I_F)$ is a filter due to Proposition \ref{2propdiv}.

Let $a\in \omega(I_F)$. So $a\in x^{\perp}$ for some $x\in I_F$. By Proposition \ref{1fxpro}\ref{1fxpro1} follows that $A=x^{\perp\perp}\veebar F\subseteq a^{\perp}\veebar F$ and so $a\in \sigma(F)$. Conversely, let $a\in \sigma(F)$. So $x\odot f=0$ for some $x\in a^{\perp}$ and $f\in F$. Since $x\in x^{\perp\perp}$ we obtain that $x\in I_F$. On the other hand, $a\in a^{\perp\perp}\subseteq x^{\perp}$ so $a\in \omega(I_F)$. Hence, $\sigma(F)=\omega(I_F)$ and it proves that $\sigma(F)$ is an $\omega$-filter.

At the end, for any $a\in \sigma(F)$ there exist $x\in a^{\perp}$ and $f\in F$ such that $x\odot f=0$. By \ref{res1} we have
\[f=f\odot 1=f\odot (a\vee x)=(f\odot a)\vee (f\odot x)=f\odot a.\]
So $f\leq a$ and it shows that $a\in F$. It holds the result.
\end{proof}

In the following we characterize the greatest $\omega$-filter of a normal residuated lattice $\mathfrak{A}$ contained in a given filter.
\begin{theorem}
Let $\mathfrak{A}$ be a normal residuated lattice and $F$ be a filter of $\mathfrak{A}$. Then $\sigma(F)$ is the greatest $\omega$-filter of $\mathfrak{A}$ contained in $F$.
\end{theorem}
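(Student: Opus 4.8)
The plan is to exploit the fact that almost all the work has already been done by Proposition \ref{sigmaomegafil}, which shows (\emph{without} any normality hypothesis) that $\sigma(F)$ is an $\omega$-filter contained in $F$. Consequently the only thing normality must buy us is \emph{maximality}: it suffices to prove that every $\omega$-filter $H$ contained in $F$ satisfies $H\subseteq \sigma(F)$. Once this is shown, $\sigma(F)$ is an $\omega$-filter below $F$ that dominates all others, hence is the greatest.

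To carry this out I would fix an arbitrary $\omega$-filter $H=\omega(I_H)\subseteq F$, with $I_H$ an ideal of $\ell(\mathfrak{A})$, and take $a\in H$. By Proposition \ref{1propdiv}\ref{1propdiv1} we have $\omega(I_H)=\cup_{x\in I_H}x^{\perp}$, so there is some $x\in I_H$ with $a\in x^{\perp}$, that is $x\vee a=1$. The key observation, and the one I expect to be the real content of the argument, is that the \emph{whole} annulet $x^{\perp}$ sits inside $F$: indeed $x^{\perp}\subseteq \omega(I_H)=H\subseteq F$. This is what lets the element $x$ transfer information from $H$ into $F$.

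Now I would invoke normality through the identity Corollary \ref{noco}\ref{noco6}. Since $x\vee a=1$, the annulet $(x\vee a)^{\perp}=1^{\perp}=A$, and normality gives $A=(x\vee a)^{\perp}=x^{\perp}\veebar a^{\perp}$. Combining with $x^{\perp}\subseteq F$ from the previous step yields $A=x^{\perp}\veebar a^{\perp}\subseteq F\veebar a^{\perp}$, whence $a^{\perp}\veebar F=A$, i.e. $a\in\sigma(F)$. As $a\in H$ was arbitrary, $H\subseteq\sigma(F)$.

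The main obstacle is conceptual rather than computational: recognizing that membership $a\in\omega(I_H)$ produces a witness $x$ whose annulet $x^{\perp}$ lies entirely in $F$, and that normality (Corollary \ref{noco}\ref{noco6}) is exactly the device that upgrades the automatic comaximality $x^{\perp}\veebar a^{\perp}=A$ into the desired $a^{\perp}\veebar F=A$. Everything else is a direct chain of inclusions, so no delicate estimates or further case analysis should be needed; together with Proposition \ref{sigmaomegafil} this completes the characterization.
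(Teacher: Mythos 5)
Your proposal is correct and follows essentially the same argument as the paper: both rely on Proposition \ref{sigmaomegafil} for $\sigma(F)$ being an $\omega$-filter contained in $F$, and then for any $\omega$-filter $H\subseteq F$ and $a\in H$ extract a witness $x\in I_H$ with $a\vee x=1$, apply the normality characterization of Corollary \ref{noco} to get $a^{\perp}\veebar x^{\perp}=A$, and use $x^{\perp}\subseteq H\subseteq F$ to conclude $a\in\sigma(F)$. The only cosmetic difference is that you invoke item \ref{noco6} of that corollary where the paper invokes item \ref{noco4}; these are interchangeable here.
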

\begin{proof}
By Proposition \ref{sigmaomegafil} follows that $\sigma(F)$ is an $\omega$-filter contained in $F$. Let $G$ be an $\omega$-filter such that $G\subseteq F$. Thus for $a\in G$ there exists $x\in I_G$ such that $a\vee x=1$. By Corollary \ref{noco}\ref{noco4} follows that $A=a^{\perp}\veebar x^{\perp}\subseteq a^{\perp}\veebar G\subseteq a^{\perp}\veebar F$. It shows that $a\in \sigma(F)$ and so the result holds.
\end{proof}

\end{document}